\newtheorem{theorem}{Theorem}
\theoremstyle{plain}
\newtheorem{corollary}{Corollary}
\newtheorem{definition}{Definition}
\newtheorem{example}{Example}
\newtheorem{lemma}{Lemma}
\newtheorem{proposition}{Proposition}
\newtheorem{remark}{Remark}
\numberwithin{equation}{section}
\begin{document}
\title[Convergence rate of weak Euler schemes]{On the rate of convergence of
simple and jump-adapted weak Euler schemes for L\'{e}vy driven SDEs}
\author{R. Mikulevicius}
\address{University of Southern California, Los Angeles, CA}
\date{February 20, 2012}
\subjclass{60J75 (Primary) 60J60, 60H30, 45K05, 35S10}
\keywords{Parabolic integro-differntial equations, Weak Euler scheme,
Approximate and jump-adapted Euler schemes}

\begin{abstract}
The paper studies the rate of convergence of a weak Euler approximation for
solutions to possibly completely degenerate SDEs driven by L\'{e}vy
processes, with H\"{o}lder-continuous coefficients. It investigates the
dependence of the rate on the regularity of coefficients and driving
processes and its robustness to the approximation of the increments of the
driving process. A convergence rate is derived for some approximate
jump-adpted Euler scheme as well.
\end{abstract}

\maketitle

\section{\textrm{Introduction}}

The paper studies the weak Euler approximation for solutions to possibly
completely degenerate SDEs driven by L\'{e}vy processes. As in \cite{mikz3},
the main goal is to investigate the dependence of the convergence rate on
the regularity of coefficients and driving processes. In addition, we
consider the robustness of the results to the approximation of the law of
the increments of the driving noise in the whole scale of time
discretization errors.

Let $(\Omega ,\mathcal{F},\mathbf{P})$ be a complete probability space with
a filtration $\mathbb{F}=\{\mathcal{F}_{t}\}_{t\in \lbrack 0,T]}$ of $\sigma 
$-algebras satisfying the usual conditions and $\alpha \in (0,2]$ be fixed.
Consider the following model in $\mathbf{R}^{d}$: 
\begin{equation}
X_{t}=X_{0}+\int_{0}^{t}a(X_{s})ds+\int_{0}^{t}b(X_{s})dW_{s}+%
\int_{0}^{t}G(X_{s-})dZ_{s},t\in \lbrack 0,T],  \label{one}
\end{equation}%
where $a(x)=(a^{i}(x))_{1\leq i\leq d}$, $b(x)=(b^{ij}(x))_{1\leq i\leq
d,1\leq j\leq n}$, $G(x)=(G^{ij}(x))_{1\leq i\leq d,1\leq j\leq m}$, $x\in 
\mathbf{R}^{d}$ are measurable and bounded, with $a=0$ if $\alpha \in (0,1)$
and $b=0$ if $\alpha \in (0,2).$ The process $W_{s}$ is a standard Wiener in 
$\mathbf{R}^{n}$. The last term is driven by $Z=\{Z_{t}\}_{t\in \lbrack
0,T]} $, an $m$-dimensional L\'{e}vy process whose characteristic function
is $\exp \left\{ t\eta (\xi )\right\} $ with 
\begin{equation*}
\eta (\xi )=\int_{\mathbf{R}_{0}^{m}}\big[e^{i(\xi ,\upsilon )}-1-i\chi
_{\alpha }(\upsilon )(\xi ,\upsilon )\big]\pi (d\upsilon ),
\end{equation*}%
where $\chi _{\alpha }(\upsilon )=\chi _{\left\{ |\upsilon |\leq 1\right\} }%
\mathbf{1}_{\{\alpha \in (1,2]\}}$. Hence, 
\begin{equation}
Z_{t}=\int_{0}^{t}\int (1-\chi _{\alpha }(\upsilon ))\upsilon p(ds,d\upsilon
)+\int_{0}^{t}\int \chi _{\alpha }(\upsilon )\upsilon q(ds,d\upsilon ), 
\notag
\end{equation}%
where $p(dt,d\upsilon )$ is a Poisson point measure on $[0,\infty )\times 
\mathbf{R}_{0}^{m}$ ($\mathbf{R}_{0}^{m}=\mathbf{R}^{m}\backslash \{0\}$)
with $\mathbf{E}[p(dt,d\upsilon )]=\pi (d\upsilon )dt$, and $q(dt,d\upsilon
)=p(dt,d\upsilon )-\pi (d\upsilon )dt$ is the centered Poisson measure. It
is assumed that $Z_{t}$ is a L\'{e}vy process of order $\alpha :$ 
\begin{equation*}
\int (|\upsilon |^{\alpha }\wedge 1)\pi (d\upsilon )<\infty .
\end{equation*}

Let the time discretization $\{\tau _{i},i=0,\ldots ,n_{T}\}$ of the
interval $[0,T]$ with maximum step size $\delta >0$ be a partition of $[0,T]$
such that $0=\tau _{0}<\tau _{1}<\dots <\tau _{n_{T}}=T$ and $\max_{i}(\tau
_{i}-\tau _{i-1})\leq \delta .$ The Euler approximation of $X$ is an $%
\mathbb{F}$-adapted stochastic process $Y=\{Y_{t}\}_{t\in \lbrack 0,T]}$
defined by the stochastic equation%
\begin{equation}
Y_{t}=X_{0}+\int_{0}^{t}a(Y_{\tau _{i_{s}}})ds+\int_{0}^{t}b(Y_{\tau
_{i_{s}}})dW_{s}+\int_{0}^{t}G(Y_{\tau _{i_{s}}})dZ_{s},t\in \lbrack 0,T],
\label{du}
\end{equation}%
where $\tau _{i_{s}}=\tau _{i}$\ if $s\in \lbrack \tau _{i},\tau
_{i+1}),i=0,\ldots ,n_{T}-1.$ Contrary to those in (\ref{one}), the
coefficients in (\ref{du}) are piecewise constants in each time interval of $%
[\tau _{i},\tau _{i+1}).$

The weak Euler approximation $Y$\ is said to converge with order $\kappa >0$%
\ if for each bounded smooth function $g$ with bounded derivatives, there
exists a constant $C$, depending only on $g$, such that 
\begin{equation*}
|\mathbf{E}g(Y_{T})-\mathbf{E}g(X_{T})|\leq C\delta ^{\kappa },
\end{equation*}%
where $\delta >0$\ is the maximum step size of the time discretization.

The weak Euler approximation of stochastic differential equations with
smooth coefficients and $G=0$ has been consistently studied. For diffusion
processes, Milstein was one of the first to investigate the order of weak
convergence and derived $\kappa =1$~\cite{Mil79, Mil86}. Talay considered a
class of the second order approximations for diffusion processes~\cite%
{Tal84, Tal86}. For It\^{o} processes with jump components (a finite number
of jumps in a finite interval), it was shown in \cite{MiP88} the first-order
convergence in the case in which the coefficient functions possess
fourth-order continuous derivatives~. Platen and Kloeden \& Platen studied
not only Euler but also higher order approximations~\cite{KlP00, Pla99} and
references therein.

Protter \& Talay (\cite{PrT97}) analyzed the weak Euler approximation for (%
\ref{one}) with $\alpha =2$. They proved that the order of convergence is $%
\kappa =1,$ provided that $G,b,a$ and $g$\ have four bounded derivatives and
the L\'{e}vy measure of $Z$\ has finite moments of the order $\mu =8$. In
this paper we show that $\kappa =1$ can be achieved when $\mu =4$ and there
still is some order of convergence for $\mu \in (2,4]$. Moreover, we assume $%
\beta $-Lipshitz continuity of the coefficients and $g$ and derive that for $%
\alpha <\beta \leq \mu \leq 2\alpha $ the order of convergence $\kappa =%
\frac{\beta }{\alpha }-1$. In particular, when $\beta =\mu =2\alpha $ with $%
\alpha \in (0,2)$ (the diffusion part is absent), the convergence order is
still $\kappa =1$.\ 

As in \cite{MiP911} and \cite{mikz3}, this paper employs the idea of Talay
(see \cite{Tal84}) and uses the solution to the backward Kolmogorov equation
associated with $X_{t},$ It\^{o}'s formula, and one-step estimates. Since
one step estimates were derived in \cite{mikz3}, the main difficulty is to
solve the degenerate backward Kolmogorov equation in Lipschitz classes (see
Theorem \ref{thm:StoCP} below). We obtain the solution of the degenerate
equation as a limit of solutions to regularized (nondegenerate) equations.
Although the solution to (\ref{one}) is strong and probabilistic arguments
are applied for the uniform Lipshitz estimates of the approximating
sequence, contrary to \cite{PrT97}, we do not use derivatives of the
stochastic flows.

If (\ref{one}) has a nondegenerate main part, some assumptions imposed can
be relaxed (see \cite{mikz3}, \cite{MiP911}, Kubilius \& Platen and Platen
\& Bruti-Liberati \cite{KuP01, PlB10}). More complex and higher order
schemes were studied and discussed, for example, by Cont and Tankov,
Jourdain and Kohatsu-Higa (see \cite{cont}, \cite{jh} and references
therein).

Motivated by the difficulty to approximate the increments of the driving
processes, Jacod, Kurtz, M\'{e}l\'{e}ard and Protter in \cite{jkmp}, studied
the approximated Euler scheme where the increments of $Z$ are substituted by
i.i.d. random variables that are easier to simulate. There are two sources
of errors in this case. One comes from time discretization and the other one
from substitution. We extend some of the results in \cite{jkmp} to the whole
rate scale and show that the errors add up. In particular, the driving
process $Z$ can be replaced with a Levy process $\tilde{Z}$ having finite
number of jumps in $[0,T]$ by possibly cutting small jumps of $Z$ and
sometimes replacing them with a Wiener process or drift. In addition, we
consider a simple jump-adapted Euler scheme and show that presence of $%
\tilde{Z}$-jump moments in the partition $\left\{ \tau _{i}\right\} $
influences the convergence rate. The approximation itself is simpler and
assumptions imposed are different than those introduced by Kohatsu-Higa and
Tankov in \cite{kot} (see references therein as well) for a more
sophisticated (higher order) jump-adapted scheme.

The paper is organized as follows. In Section 2, some notation is
introduced, the main results stated and the proof of the main theorem is
outlined. In Section 3, we present the essential technical results about
backward degenerate Kolmogorov equation, followed by the proof of the main
theorem in Section 4. The robustness of the approximation and jump-adapted
Euler scheme is considered as well. In the last section we discuss the
optimality of the imposed assumptions.

\section{Notation and Main Result}



Denote $H=[0,T]\times \mathbf{R}^{d}$, $\mathbf{N}=\{0,1,2,\ldots \}$, $%
\mathbf{R}_{0}^{d}=\mathbf{R}^{d}\backslash \{0\}$. For $x,y\in \mathbf{R}%
^{d}$, write $(x,y)=\sum_{i=1}^{d}x_{i}y_{i}$. For $(t,x)\in H,$ multiindex $%
\gamma \in \mathbf{N}^{d}$ with $D^{\gamma }=\frac{\partial ^{|\gamma |}}{%
\partial x_{1}^{\gamma _{1}}\ldots \partial x_{d}^{\gamma _{d}}}$, and $%
i,j=1,\ldots ,d$, denote 
\begin{eqnarray*}
\partial _{t}u(t,x) &=&\frac{\partial }{\partial t}u(t,x),\ D^{k}u(t,x)=\big(%
D^{\gamma }u(t,x)\big)_{|\gamma |=k},k\in \mathbf{N}\text{,} \\
\partial _{i}u(t,x) &=&u_{x_{i}}(t,x)=\frac{\partial }{\partial x_{i}}%
u(t,x),\ \partial _{ij}^{2}u(t,x)=u_{x_{i}x_{j}}(t,x)=\frac{\partial ^{2}}{%
\partial x_{i}x_{j}}u(t,x), \\
\partial _{x}u(t,x) &=&\nabla u(t,x)=\nabla _{x}u(t,x)=\big(\partial
_{1}u(t,x),\dots ,\partial _{d}u(t,x)\big), \\
\Delta u(t,x) &=&\sum_{i=1}^{d}u_{x_{i}x_{i}}(t,x).
\end{eqnarray*}%
For a smooth function $v$ on $\mathbf{R}^{d}$ and $k\in \mathbf{N}$, denote 
\begin{equation*}
v^{(k)}(x;\xi ^{1},\ldots ,\xi ^{k})=\sum_{i_{1},\ldots
,i_{k}=1}^{d}v_{x_{i_{1}}\ldots x_{i_{k}}}(x)\xi _{i_{1}}^{1}\ldots \xi
_{i_{k}}^{k},x,\xi ^{i}\in \mathbf{R}^{d},i=1,\ldots ,k.
\end{equation*}%
In particular, $v^{(1)}(x;\xi )=(\nabla v(x),\xi ),x,\xi \in \mathbf{R}^{d}$.

For $\beta =[\beta ]^{-}+\left\{ \beta \right\} ^{+}>0$, where $[\beta
]^{-}\in \mathbf{N}$ and $\left\{ \beta \right\} ^{+}\in (0,1]$, let $\tilde{%
C}^{\beta }(H)$ denote the Lipschitz space of measurable functions $u$ on $H$
such that the norm 
\begin{equation*}
|u|_{\beta }=\sum_{|\gamma |\leq \lbrack \beta ]^{-}}|D_{x}^{\gamma
}u(t,x)|_{0}+\sup_{\substack{ |\gamma |=[\beta ]^{-},  \\ t,x\neq \tilde{x}}}%
\frac{|D_{x}^{\gamma }u(t,x)-D_{x}^{\gamma }u(t,\tilde{x})|}{|x-\tilde{x}%
|^{\{\beta \}^{+}}}
\end{equation*}%
is finite, where $|v|_{0}=\sup_{(t,x)\in H}|v(t,x)|.$ We denote $\tilde{C}%
^{\beta }(\mathbf{R}^{d})$ the corresponding function space on $\mathbf{R}%
^{d}.$

$C=C(\cdot ,\ldots ,\cdot )$ denotes constants depending only on quantities
appearing in parentheses. In a given context the same letter is (generally)
used to denote different constants depending on the same set of arguments.

The main result of this paper is the following statement.

\begin{theorem}
\label{thm:main}Let $\alpha <\beta \leq \mu \leq 2\alpha $, and assume $%
a^{i},b^{ij}\in \tilde{C}^{\beta }(\mathbf{R}^{d})$, $G^{ij}\in \tilde{C}%
^{\beta \vee 1}(\mathbf{R}^{d}),$ and 
\begin{equation*}
\int_{|\upsilon |\leq 1}|\upsilon |^{\alpha }d\pi +\int_{\left\vert \upsilon
\right\vert \geq 1}|\upsilon |^{\mu }\pi (d\upsilon )<\infty ,
\end{equation*}%
where $\pi $ is the L\'{e}vy measure of the driving process $Z$. Then there
is a constant $C$ such that for all $g\in \tilde{C}^{\beta }(\mathbf{R}^{d})$%
\begin{equation*}
|\mathbf{E}g(Y_{T})-\mathbf{E}g(X_{T})|\leq C|g|_{\beta }\delta ^{\frac{%
\beta }{\alpha }-1}.
\end{equation*}
\end{theorem}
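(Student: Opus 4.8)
The plan is to follow Talay's method, in the form used in \cite{mikz3}: use the solution of the backward Kolmogorov equation of $X$ to rewrite the global weak error as a telescoping sum of one-step errors, and then control each one-step error by the one-step estimates of \cite{mikz3}, the regularity input being the Lipschitz bound for the Kolmogorov function supplied by Theorem \ref{thm:StoCP}.

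First I would let $L$ denote the generator of the Markov process $X$ (a drift term in $a$; a second-order term built from $b$, present only when $\alpha =2$; and an integral term in $G$ and $\pi $), and for $w\in \mathbf{R}^{d}$ let $L^{(w)}$ be the same operator with the coefficients $a,b,G$ frozen at the value $w$. By Theorem \ref{thm:StoCP} there is a function $u$ solving $\partial _{t}u+Lu=0$ on $[0,T)\times \mathbf{R}^{d}$ with terminal value $u(T,\cdot )=g$, satisfying $\sup_{t\leq T}|u(t,\cdot )|_{\beta }\leq C|g|_{\beta }$ and admitting the representation $u(t,x)=\mathbf{E}\,g(X_{T}^{t,x})$. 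Since $Y_{0}=X_{0}$, we have $\mathbf{E}\,u(0,Y_{0})=\mathbf{E}\,g(X_{T})$ and $u(T,Y_{T})=g(Y_{T})$, hence
\begin{equation*}
\mathbf{E}\,g(Y_{T})-\mathbf{E}\,g(X_{T})=\mathbf{E}\big[u(T,Y_{T})-u(0,Y_{0})\big]=\sum_{i=0}^{n_{T}-1}\mathbf{E}\big[u(\tau _{i+1},Y_{\tau _{i+1}})-u(\tau _{i},Y_{\tau _{i}})\big].
\end{equation*}
Applying It\^{o}'s formula to $s\mapsto u(s,Y_{s})$ on each interval $[\tau _{i},\tau _{i+1}]$, taking expectations (the stochastic integrals are martingales since $u$ and its relevant derivatives are bounded), and using $\partial _{s}u=-Lu$ together with the fact that on $[\tau _{i},\tau _{i+1})$ the coefficients of $Y$ are frozen at $Y_{\tau _{i}}$, I would identify each summand with
\begin{equation*}
\mathbf{E}\int_{\tau _{i}}^{\tau _{i+1}}\big[L^{(Y_{\tau _{i}})}u(s,Y_{s})-Lu(s,Y_{s})\big]\,ds,
\end{equation*}
which, because $u$ solves the Kolmogorov equation, is precisely the one-step weak error of the scheme (\ref{du}) over $[\tau _{i},\tau _{i+1}]$ tested against the function $u(\tau _{i+1},\cdot )\in \tilde{C}^{\beta }(\mathbf{R}^{d})$.

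Next I would estimate each of these one-step quantities. The integrand compares the coefficients $a,b,G$ evaluated at $Y_{\tau _{i}}$ with those evaluated at $Y_{s}$, paired with $u(s,\cdot )$ and, in the integral part, with the increments $u(s,Y_{s}+G(\cdot )\upsilon )-u(s,Y_{s})$. Expanding in the frozen argument and in $\upsilon $ and using the $\beta $-H\"{o}lder regularity of $a,b$, the $(\beta \vee 1)$-H\"{o}lder regularity of $G$, the bound $|u(s,\cdot )|_{\beta }\leq C|g|_{\beta }$, the weak cancellations of the Euler increment over a step (so that it is an expectation, not an absolute value, that needs to be bounded), and the fact that the increments of $Y$ over a step of length $h$ are of size $h^{1/\alpha }$ because $Z$ is of order $\alpha $, the one-step estimates of \cite{mikz3} give, for a constant $C$ depending only on the data,
\begin{equation*}
\left\vert \mathbf{E}\int_{\tau _{i}}^{\tau _{i+1}}\big[L^{(Y_{\tau _{i}})}u(s,Y_{s})-Lu(s,Y_{s})\big]\,ds\right\vert \leq C\,(\tau _{i+1}-\tau _{i})\,\delta ^{\frac{\beta }{\alpha }-1}|g|_{\beta }.
\end{equation*}
(The underlying local error over a step of length $h$ is of size $h^{\beta /\alpha }|g|_{\beta }$, and $h^{\beta /\alpha }=h\cdot h^{\beta /\alpha -1}\leq h\,\delta ^{\beta /\alpha -1}$ since $0\leq \frac{\beta }{\alpha }-1\leq 1$ under the hypothesis $\alpha <\beta \leq 2\alpha $.) Summing over $i$ and using $\sum_{i}(\tau _{i+1}-\tau _{i})=T$ then gives $|\mathbf{E}\,g(Y_{T})-\mathbf{E}\,g(X_{T})|\leq C\,T\,\delta ^{\beta /\alpha -1}|g|_{\beta }$, which is the assertion.

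I expect the real obstacle to lie not in this assembly but in its two inputs. The genuinely hard one, which is where Theorem \ref{thm:StoCP} does its work, is the construction --- with the quantitative bound $|u|_{\beta }\leq C|g|_{\beta }$ --- of the Kolmogorov function $u$ for the \emph{possibly completely degenerate} equation $\partial _{t}u+Lu=0$ with only $\tilde{C}^{\beta }$ data: classical Schauder theory does not apply, so one must instead solve nondegenerate regularizations of the equation and pass to the limit, the crux being regularization-uniform Lipschitz estimates of the approximating solutions, derived by probabilistic arguments on (\ref{one}). The remaining care, internal to the present proof, is to check that the one-step estimates of \cite{mikz3} apply to $u$ in the limited class $\tilde{C}^{\beta }$ produced by Theorem \ref{thm:StoCP}, and the routine justification of the It\^{o}/Dynkin identity above for such $u$ and for the jump process $Y$ --- via mollification of $u$ and passage to the limit using the uniform $\tilde{C}^{\beta }$-bounds and the moment estimates on $Y$.
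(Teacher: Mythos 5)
Your plan has the same overall architecture as the paper's proof: take the Kolmogorov solution $u$ from Theorem \ref{thm:StoCP}, telescope $\mathbf{E}\,u(T,Y_T)-\mathbf{E}\,u(0,Y_0)$, apply It\^o on each step, and invoke a one-step estimate. The integrand you write down, $L_{Y_{\tau_i}}u(s,Y_s)-L_{Y_s}u(s,Y_s)$, is indeed the one the paper arrives at. However, the step you then sketch --- ``compare the coefficients $a,b,G$ at $Y_{\tau_i}$ with those at $Y_s$, using that increments of $Y$ over a step of length $h$ are of size $h^{1/\alpha}$'' --- is neither how the paper proceeds nor a workable mechanism: $Y$ has unbounded jumps, so its increments are not pathwise $O(h^{1/\alpha})$, and the coefficient-difference route does not obviously yield the rate $\delta^{\beta/\alpha-1}$.

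What the paper does, and your write-up omits, is add and subtract the identity $\partial_t u(s,Y_{\tau_{i_s}})+L_{Y_{\tau_{i_s}}}u(s,Y_{\tau_{i_s}})=0$ to rewrite the integrand as
\begin{equation*}
\big[\partial_t u(s,Y_s)-\partial_t u(s,Y_{\tau_{i_s}})\big]+\big[L_{Y_{\tau_{i_s}}}u(s,Y_s)-L_{Y_{\tau_{i_s}}}u(s,Y_{\tau_{i_s}})\big].
\end{equation*}
This puts each bracket into exactly the form $f(Y_s)-f(Y_{\tau_{i_s}})$ handled by the one-step Lemma \ref{lem:expect}, with $f=\partial_t u(s,\cdot)$ and $f=L_{Y_{\tau_{i_s}}}u(s,\cdot)$; both are in $\tilde C^{\beta-\alpha}(\mathbf{R}^d)$ with $|f|_{\beta-\alpha}\leq C|g|_\beta$ by Lemma \ref{le3} and Theorem \ref{thm:StoCP}, and Lemma \ref{lem:expect} (proved by mollifying $f$, using $|f-f^\varepsilon|\leq C\varepsilon^{\beta-\alpha}|f|_{\beta-\alpha}$ and $|L_{Y_{\tau_{i_s}}}f^\varepsilon|\leq C\varepsilon^{\beta-2\alpha}|f|_{\beta-\alpha}$, and optimizing over $\varepsilon$) then yields the rate $\delta^{\beta/\alpha-1}$ conditionally on $\mathcal{F}_{\tau_{i_s}}$. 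So the gap in your proposal is this reduction: without rewriting the integrand as differences of a fixed $\tilde C^{\beta-\alpha}$ function, the quoted one-step estimate does not apply directly, and the alternative route you gesture at (coefficient comparison with pathwise increment bounds) does not go through.
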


Applying Theorem~\ref{thm:main} to the case $\alpha =2$ we have an \ obvious
consequence in the jump-diffusion case.

\begin{corollary}
\label{co1}Consider the jump-diffusion case $(\alpha =2)$ 
\begin{equation*}
X_{t}=X_{0}+\int_{0}^{t}a(X_{s})ds+\int_{0}^{t}b(X_{s})dW_{s}+%
\int_{0}^{t}G(X_{s-})dZ_{s},t\in \lbrack 0,T].
\end{equation*}%
Let $2<\beta \leq \mu \leq 4$. Assume $a,b^{ij},G^{ij}\in \tilde{C}^{\beta }(%
\mathbf{R}^{d})$ and

\begin{equation*}
\int_{|\upsilon |\leq 1}|\upsilon |^{2}\pi (d\upsilon )+\int_{|\upsilon
|>1}|\upsilon |^{\mu }\pi (d\upsilon )<\infty .
\end{equation*}%
Then there is a constant $C$ such that for all $g\in \tilde{C}^{\beta }(%
\mathbf{R}^{d})$%
\begin{equation*}
|\mathbf{E}g(Y_{T})-\mathbf{E}g(X_{T})|\leq C|g|_{\beta }\delta ^{\frac{%
\beta }{2}-1}.
\end{equation*}
\end{corollary}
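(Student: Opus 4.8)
The proof of Corollary~\ref{co1} is an immediate specialization of Theorem~\ref{thm:main}, so the plan is simply to check that the hypotheses of the corollary are precisely the hypotheses of the theorem with $\alpha=2$. First I would observe that with $\alpha=2$ the constraint $\alpha<\beta\le\mu\le2\alpha$ becomes $2<\beta\le\mu\le4$, which is exactly the range assumed in the corollary. Next I would note that in this case $\chi_\alpha(\upsilon)=\chi_{\{|\upsilon|\le1\}}$ (since $\alpha\in(1,2]$), so $Z$ is the standard $\alpha=2$ jump-diffusion driver in \eqref{one}, and the equation in the corollary is literally \eqref{one}.

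Second, I would match the regularity conditions on the coefficients. Theorem~\ref{thm:main} requires $a^i,b^{ij}\in\tilde C^{\beta}(\mathbf{R}^d)$ and $G^{ij}\in\tilde C^{\beta\vee1}(\mathbf{R}^d)$. Since $\beta>2>1$ here, we have $\beta\vee1=\beta$, so the condition on $G$ reduces to $G^{ij}\in\tilde C^{\beta}(\mathbf{R}^d)$; thus the single assumption $a,b^{ij},G^{ij}\in\tilde C^{\beta}(\mathbf{R}^d)$ in the corollary supplies everything the theorem needs. Similarly, the L\'evy-measure integrability condition in Theorem~\ref{thm:main}, namely $\int_{|\upsilon|\le1}|\upsilon|^{\alpha}\,d\pi+\int_{|\upsilon|\ge1}|\upsilon|^{\mu}\,\pi(d\upsilon)<\infty$, becomes with $\alpha=2$ exactly $\int_{|\upsilon|\le1}|\upsilon|^{2}\pi(d\upsilon)+\int_{|\upsilon|>1}|\upsilon|^{\mu}\pi(d\upsilon)<\infty$, which is the stated hypothesis (the boundary sphere $|\upsilon|=1$ is $\pi$-negligible up to a set that can be absorbed either way, or one simply notes the two formulations agree).

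Finally, I would invoke Theorem~\ref{thm:main} directly: under these matched hypotheses it yields a constant $C$ such that $|\mathbf{E}g(Y_T)-\mathbf{E}g(X_T)|\le C|g|_{\beta}\delta^{\beta/\alpha-1}$ for all $g\in\tilde C^{\beta}(\mathbf{R}^d)$, and substituting $\alpha=2$ gives the exponent $\beta/2-1$, which is precisely the claimed bound. There is no genuine obstacle here; the only point requiring a word of care is the identification $\beta\vee1=\beta$ for $\beta>2$ and the equivalence of the two ways of writing the integrability condition across $|\upsilon|=1$, both of which are trivial. Hence the corollary follows.
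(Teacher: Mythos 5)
Your proposal is correct and matches the paper's approach exactly: the paper itself treats Corollary~\ref{co1} as an immediate consequence of Theorem~\ref{thm:main} upon setting $\alpha=2$, and gives no separate proof. Your careful verification that $\beta\vee1=\beta$ for $\beta>2$ and that the integrability condition specializes as stated is precisely the (trivial) bookkeeping the paper leaves implicit.
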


An immediate extension of Theorem \ref{thm:main} (for the test function $%
g\in \tilde{C}^{\nu }(\mathbf{R}^{d})$ with $\nu \in (0,\beta ]$) is the
following statement.

\begin{corollary}
\label{co2}Let $\alpha <\beta \leq \mu \leq 2\alpha $, and assume $%
a^{i},b^{ij}\in \tilde{C}^{\beta }(\mathbf{R}^{d})$, $G^{ij}\in \tilde{C}%
^{\beta \vee 1}(\mathbf{R}^{d}),$ and 
\begin{equation*}
\int_{|\upsilon |\leq 1}|\upsilon |^{\alpha }d\pi +\int_{\left\vert \upsilon
\right\vert \geq 1}|\upsilon |^{\mu }\pi (d\upsilon )<\infty ,
\end{equation*}%
where $\pi $ is the L\'{e}vy measure of the driving process $Z$. Let $\nu
\in (0,\beta ]$. Then there is a constant $C$ such that for all $g\in \tilde{%
C}^{\nu }(\mathbf{R}^{d})$%
\begin{equation*}
|\mathbf{E}g(Y_{T})-\mathbf{E}g(X_{T})|\leq C|g|_{\nu }\delta ^{\nu (\frac{1%
}{\alpha }-\frac{1}{\beta })}.
\end{equation*}
\end{corollary}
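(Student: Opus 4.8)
The plan is to deduce Corollary~\ref{co2} from Theorem~\ref{thm:main} by a standard mollification (smoothing) argument on the test function, trading the loss of regularity of $g$ against an extra power of $\delta$. Fix $g\in\tilde C^{\nu}(\mathbf R^{d})$ with $\nu\in(0,\beta]$. Pick a nonnegative $\phi\in C_{0}^{\infty}(\mathbf R^{d})$ with $\int\phi=1$ and set $\phi_{\varepsilon}(x)=\varepsilon^{-d}\phi(x/\varepsilon)$, $g_{\varepsilon}=g*\phi_{\varepsilon}$, for a parameter $\varepsilon>0$ to be chosen at the end. Then $g_{\varepsilon}\in\tilde C^{\beta}(\mathbf R^{d})$ (indeed $g_{\varepsilon}\in C^{\infty}$), and the two elementary mollifier estimates hold: first, $|g-g_{\varepsilon}|_{0}\le C|g|_{\nu}\,\varepsilon^{\nu}$ (since $\nu\le 1$ when $[\nu]^{-}=0$, and more generally one uses the Taylor expansion up to order $[\nu]^{-}$ together with the cancellation $\int\phi_{\varepsilon}(y)\,y^{\gamma}\,dy$ can be handled by subtracting the Taylor polynomial); second, $|g_{\varepsilon}|_{\beta}\le C|g|_{\nu}\,\varepsilon^{-(\beta-\nu)}$, obtained by moving $\beta-[\nu]^{-}$ derivatives onto $\phi_{\varepsilon}$ (each costing a factor $\varepsilon^{-1}$) and using the H\"older seminorm of the top surviving derivative of $g$.

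Next I would split the error through $g_{\varepsilon}$:
\begin{equation*}
|\mathbf E g(Y_{T})-\mathbf E g(X_{T})|\le |\mathbf E(g-g_{\varepsilon})(Y_{T})|+|\mathbf E g_{\varepsilon}(Y_{T})-\mathbf E g_{\varepsilon}(X_{T})|+|\mathbf E(g_{\varepsilon}-g)(X_{T})|.
\end{equation*}
The first and third terms are each at most $|g-g_{\varepsilon}|_{0}\le C|g|_{\nu}\varepsilon^{\nu}$. For the middle term, apply Theorem~\ref{thm:main} to the smooth test function $g_{\varepsilon}\in\tilde C^{\beta}(\mathbf R^{d})$, obtaining the bound $C|g_{\varepsilon}|_{\beta}\,\delta^{\frac{\beta}{\alpha}-1}\le C|g|_{\nu}\,\varepsilon^{-(\beta-\nu)}\delta^{\frac{\beta}{\alpha}-1}$. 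Hence
\begin{equation*}
|\mathbf E g(Y_{T})-\mathbf E g(X_{T})|\le C|g|_{\nu}\Big(\varepsilon^{\nu}+\varepsilon^{-(\beta-\nu)}\delta^{\frac{\beta}{\alpha}-1}\Big).
\end{equation*}
Optimizing in $\varepsilon$ means balancing the two terms: $\varepsilon^{\nu}=\varepsilon^{-(\beta-\nu)}\delta^{\frac{\beta}{\alpha}-1}$, i.e. $\varepsilon=\delta^{(\frac{\beta}{\alpha}-1)/\beta}=\delta^{\frac1\alpha-\frac1\beta}$, which gives $\varepsilon^{\nu}=\delta^{\nu(\frac1\alpha-\frac1\beta)}$, exactly the claimed rate. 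One should check that with $\beta\le\mu\le2\alpha$ and $\alpha<\beta$ the exponent $\frac1\alpha-\frac1\beta$ is positive, so $\varepsilon\to 0$ as $\delta\to0$, as required for the mollifier estimates to be the dominant effect; this is immediate since $\beta>\alpha$.

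The only genuinely delicate point is the mollifier inequality $|g-g_{\varepsilon}|_{0}\le C|g|_{\nu}\varepsilon^{\nu}$ when $\nu>1$, where one cannot simply use $|g(x)-g(x-y)|\le|g|_{\nu}|y|^{\nu}$; instead one subtracts the order-$[\nu]^{-}$ Taylor polynomial of $g$ at $x$ and uses that $\phi$ can be chosen with vanishing moments, or, more simply, notes that $\int\phi_{\varepsilon}(y)(g(x-y)-g(x))\,dy$ with a single-term Taylor remainder already yields the $\varepsilon^{\{\nu\}^{+}}$ factor times $\varepsilon^{[\nu]^{-}}$ from the $[\nu]^{-}$-th derivative bound — either way the bookkeeping is routine. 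Everything else is bounded directly, and the constant $C$ depends only on $\phi$, $d$, $\alpha$, $\beta$, $\nu$ and the data of the equation, not on $g$. This completes the proof.
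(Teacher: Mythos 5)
Your overall strategy — mollify $g$, split $|\mathbf E g(Y_T)-\mathbf E g(X_T)|$ by the triangle inequality through $g_\varepsilon$, apply Theorem~\ref{thm:main} to $g_\varepsilon$, and then optimize in $\varepsilon$ to get the exponent $\nu(\frac1\alpha-\frac1\beta)$ — is exactly the route taken in the paper. The decomposition and the balancing computation are correct.

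The gap is in the mollifier you chose. You fix at the outset a \emph{nonnegative} $\phi\in C_0^\infty(\mathbf R^d)$ with $\int\phi=1$, but a nonnegative function that is not identically zero has $\int x_i^2\phi(x)\,dx>0$, so its second moments cannot vanish. Consequently, for a nonnegative mollifier the estimate $|g-g_\varepsilon|_0\le C|g|_\nu\varepsilon^\nu$ fails once $\nu>2$: after subtracting the Taylor polynomial of $g$ at $x$ you are left with terms like $\frac12\sum_{ij}\partial^2_{ij}g(x)\int y_iy_j\phi_\varepsilon(y)\,dy$ of size $\varepsilon^2$, not $\varepsilon^\nu$. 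Your "more simply" alternative suffers from the same defect — a one-term Taylor remainder together with symmetry kills the first moment but gives you only $\varepsilon^{\nu\wedge 2}$ in general. Since here $\beta$ can be as large as $4$ (when $\alpha=2$) and $\nu\in(0,\beta]$, the case $\nu>2$ is genuinely in scope. The paper avoids this by citing Estrada's construction of a rapidly decreasing $w\in\mathcal S(\mathbf R^d)$ (necessarily sign-changing, not compactly supported) with $\int w=1$ and \emph{all} higher moments $\int w(x)x^\gamma\,dx=0$, $\gamma\neq\mathbf 0$; with that choice the Taylor-polynomial subtraction kills all the intermediate terms and yields $\varepsilon^\nu$ for all $\nu\le 4$. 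You gesture at "choosing $\phi$ with vanishing moments," which is indeed the fix, but this contradicts the nonnegativity you imposed, so as written the argument does not close. Replacing your mollifier by a Schwartz-class one with vanishing moments (or at least one with vanishing moments up to order $[\beta]^-$) and dropping the nonnegativity requirement would make the proof complete; the remaining estimate $|g_\varepsilon|_\beta\le C\varepsilon^{\nu-\beta}|g|_\nu$, obtained by moving derivatives onto the mollifier and using that $\int D^{\mu'}w_\varepsilon=0$ for $\mu'\neq 0$, is fine as you describe it.
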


\begin{remark}
In particular, if $\alpha \in \lbrack 1,2]$, $\mu =\beta =2\alpha $ and $g$
is Lipshitz ($\nu =1$), then the convergence rate $\kappa =\frac{1}{2\alpha }%
.$
\end{remark}

\subsection{Approximate simple Euler scheme}

Following \cite{jkmp}, for $\sigma \in (0,1),\delta >0,$ we choose a time
discretization $\left\{ \tau _{i}\right\} $ and replace the increments of
the driving \ process $Z_{\tau _{i+1}}-Z_{\tau _{i}}$ in \ref{du} by $%
\mathcal{F}_{\tau _{i}}$-conditionally independent random variables $\zeta
_{i},i=0,\ldots ,n_{T}-1$. We assume that there is a function $\phi (\sigma
) $ such that $\lim_{\sigma \rightarrow 0}\phi (\sigma )=0$ and for $%
i=0,\ldots ,n_{T}-1,$%
\begin{equation}
\left\vert \mathbf{E[}h(Z_{\tau _{i+1}}-Z_{\tau _{i}})-h(\zeta _{i+1})|%
\mathcal{F}_{\tau _{i}}]\right\vert \leq C|h|_{\beta }\phi (\sigma )(\tau
_{i+1}-\tau _{i}),h\in \tilde{C}^{\beta }(\mathbf{R}^{d}).  \label{kas}
\end{equation}%
with some constant $C,$ independent of $\sigma ,\delta $ and $h$. Let $\xi
_{t}=0$ if $0\leq t<\tau _{1},\xi _{t}=\zeta _{i}$ if $t_{i}\leq
t<t_{i+1},i=1,\ldots ,n_{T}-1$. We still assume that $\max_{i}(\tau
_{i+1}-\tau _{i})\leq \delta $ and approximate $X_{t}$ by%
\begin{equation}
\tilde{Y}_{t}=X_{0}+\int_{0}^{t}a(\tilde{Y}_{\tau _{i_{s}}})ds+\int_{0}^{t}b(%
\tilde{Y}_{\tau _{i_{s}}})dW_{s}+\int_{0}^{t}G(\tilde{Y}_{\tau
_{i_{s}}})d\xi _{s},t\in \lbrack 0,T].  \label{trys}
\end{equation}%
In this case $\tilde{Y}_{t}$ depends on $\delta $ and $\sigma .$

In the following example we approximate the increments of $Z_{t}$ by the
increments of a L\'{e}vy process with finite number of jumps in $[0,T].$
This approximation is constructed by cutting small jumps of $Z_{t}$. We
replace the small jumps part by appropriately chosen drift if $\alpha <\beta
\in (1,2],\alpha \in (0,1].$ If $\alpha <\beta \in (2,3],\alpha \in (1,2],$
the small jumps part is replaced by a Wiener process. Given $\sigma \in
(0,1) $, we denote $B^{\sigma }$ the square root of the positive definite $%
m\times m\,$-matrix $\left( \int_{|\upsilon |\leq \sigma }\upsilon
_{i}\upsilon _{j}d\pi \right) _{1\leq i,j\leq m}$. Let $\tilde{W}_{t}$ be a
standard independent Wiener process in $\mathbf{R}^{m}$.

\begin{example}
\label{ex1}For $\sigma \in (0,1)$ we approximate 
\begin{equation*}
Z_{t}=\int_{0}^{t}\int (1-\chi _{\alpha }(\upsilon ))\upsilon p(ds,d\upsilon
)+\int_{0}^{t}\int \chi _{\alpha }(\upsilon )\upsilon q(ds,d\upsilon ),t\in
\lbrack 0,T],
\end{equation*}%
by%
\begin{equation*}
\tilde{Z}_{t}=Z_{t}^{\sigma }+R_{t}^{\sigma },
\end{equation*}%
with%
\begin{equation*}
Z_{t}^{\sigma }=\int_{0}^{t}\int_{|\upsilon |>\sigma }(1-\chi _{\alpha
}(\upsilon ))\upsilon p(ds,d\upsilon )+\int_{0}^{t}\int_{|\upsilon |>\sigma
}\chi _{\alpha }(\upsilon )\upsilon q(ds,d\upsilon )
\end{equation*}%
and 
\begin{equation*}
R_{t}^{\sigma }=\left\{ 
\begin{array}{ll}
t\int_{|\upsilon |\leq \sigma }\upsilon \pi (d\upsilon ) & \text{if }\alpha
<\beta \in (1,2],\alpha \in (0,1], \\ 
&  \\ 
B^{\sigma }\tilde{W}_{t} & \text{if }\alpha <\beta \in (2,4],\alpha \in
(1,2], \\ 
&  \\ 
0 & \text{otherwise.}%
\end{array}%
\right.
\end{equation*}%
In this case (see Lemma \ref{lemn2} below) (\ref{kas}) holds with 
\begin{equation*}
\phi (\sigma )=\int_{|\upsilon |\leq \sigma }|\upsilon |^{\beta \wedge 3}d\pi
\end{equation*}%
and 
\begin{equation}
\zeta _{i+1}=\tilde{Z}_{\tau _{i+1}}-\tilde{Z}_{\tau _{i}},i=0,\ldots
,n_{T}-1.  \label{apf0}
\end{equation}
\end{example}

We show that time discretization and substitution errors add up.

\begin{theorem}
\label{main2}Let $\alpha <\beta \leq \mu \leq 2\alpha $, and let $%
a^{i},b^{ij}\in \tilde{C}^{\beta }(\mathbf{R}^{d})$, $G^{ij}\in \tilde{C}%
^{\beta \vee 1}(\mathbf{R}^{d}),$ and 
\begin{equation*}
\int_{|\upsilon |\leq 1}|\upsilon |^{\alpha }d\pi +\int_{\left\vert \upsilon
\right\vert \geq 1}|\upsilon |^{\mu }\pi (d\upsilon )<\infty ,
\end{equation*}%
where $\pi $ is the L\'{e}vy measure of the driving process $Z$. Assume that
there is a function $\phi (\sigma )$ such that $\lim_{\sigma \rightarrow
0}\phi (\sigma )=0$ and for $i=0,\ldots ,n_{T}-1,$%
\begin{equation}
\left\vert \mathbf{E}h(Z_{\tau _{i+1}}-Z_{\tau _{i}})-\mathbf{E}h(\zeta
_{i+1})\right\vert \leq C|h|_{\beta }\phi (\sigma )(\tau _{i+1}-\tau
_{i}),h\in \tilde{C}^{\beta }(\mathbf{R}^{d}),  \label{fo5}
\end{equation}%
for some constant $C$.

Then there is a constant $C$ (independent of $\sigma ,\delta $) such that
for all $g\in \tilde{C}^{\beta }(\mathbf{R}^{d})$%
\begin{equation*}
|\mathbf{E}g(\tilde{Y}_{T})-\mathbf{E}g(X_{T})|\leq C|g|_{\beta }[\delta ^{%
\frac{\beta }{\alpha }-1}+\phi (\sigma )].
\end{equation*}
\end{theorem}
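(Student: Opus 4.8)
The plan is to reduce Theorem \ref{main2} to a combination of Theorem \ref{thm:main} (the pure time-discretization error) and a separate estimate for the substitution error $|\mathbf{E}g(\tilde Y_T)-\mathbf{E}g(Y_T)|$, where $Y$ is the genuine Euler scheme (\ref{du}) driven by the true increments $Z_{\tau_{i+1}}-Z_{\tau_i}$ but using the \emph{same} partition $\{\tau_i\}$ as $\tilde Y$. By the triangle inequality,
\begin{equation*}
|\mathbf{E}g(\tilde Y_T)-\mathbf{E}g(X_T)|\le |\mathbf{E}g(\tilde Y_T)-\mathbf{E}g(Y_T)|+|\mathbf{E}g(Y_T)-\mathbf{E}g(X_T)|,
\end{equation*}
and the second term is $\le C|g|_\beta\delta^{\beta/\alpha-1}$ by Theorem \ref{thm:main}. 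So everything hinges on showing the first term is $\le C|g|_\beta\phi(\sigma)$, uniformly in $\delta,\sigma$.

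For the substitution error I would use the same Talay-type machinery that underlies Theorem \ref{thm:main}: let $u$ solve the backward Kolmogorov equation associated with $X_t$ with terminal data $g$ (the solution furnished by Theorem \ref{thm:StoCP}, with $|u(t,\cdot)|_\beta\le C|g|_\beta$ uniformly in $t$). Then $\mathbf{E}g(Y_T)=\mathbf{E}u(T,Y_T)$ and $\mathbf{E}g(\tilde Y_T)=\mathbf{E}u(T,\tilde Y_T)$ can each be expanded by telescoping over the partition and applying It\^o's formula on each subinterval $[\tau_i,\tau_{i+1})$, with the coefficients frozen at the left endpoint. The point is that on $[\tau_i,\tau_{i+1})$ both schemes are driven by a Markov step with frozen coefficients; the difference of the two one-step contributions is governed precisely by the difference of the laws of the driving increment, which is the quantity bounded by (\ref{fo5}). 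Concretely, one writes $\mathbf{E}g(Y_T)-\mathbf{E}g(\tilde Y_T)$ as a sum over $i$ of terms of the form
\begin{equation*}
\mathbf{E}\big[\Phi_i(Z_{\tau_{i+1}}-Z_{\tau_i})-\Phi_i(\zeta_{i+1})\big],
\end{equation*}
where $\Phi_i(v)=\mathbf{E}[u(\tau_{i+1},\Psi_i(v))\mid\mathcal F_{\tau_i}]$-type functionals built from the frozen-coefficient one-step map $\Psi_i$ and (for the continuous parts) conditional expectations over $W$; the key is that $\Phi_i\in\tilde C^\beta$ with $|\Phi_i|_\beta\le C|g|_\beta$, uniformly in $i$, because $G^{ij}\in\tilde C^{\beta\vee 1}$ so composition with the smooth-in-$v$ map $v\mapsto x+G(x)v$ (plus the drift/diffusion pieces) preserves the $\tilde C^\beta$ bound, and then (\ref{fo5}) applied with $h=\Phi_i$ gives each summand $\le C|g|_\beta\phi(\sigma)(\tau_{i+1}-\tau_i)$. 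Summing over $i$ and using $\sum_i(\tau_{i+1}-\tau_i)=T$ yields $|\mathbf{E}g(Y_T)-\mathbf{E}g(\tilde Y_T)|\le CT|g|_\beta\phi(\sigma)$.

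Some care is needed because $\tilde Y$ and $Y$ are not defined on a common probability space in an obvious coupled way; the clean route is to run the telescoping hybrid argument, i.e. define hybrid processes $Y^{(k)}$ that use the true increments on $[0,\tau_k)$ and the surrogate increments on $[\tau_k,T)$, so that $Y^{(0)}=\tilde Y$, $Y^{(n_T)}=Y$, and $\mathbf{E}g(Y)-\mathbf{E}g(\tilde Y)=\sum_k[\mathbf{E}g(Y^{(k+1)})-\mathbf{E}g(Y^{(k)})]$; consecutive hybrids differ only in the single step $[\tau_k,\tau_{k+1})$, and there one conditions on $\mathcal F_{\tau_k}$, applies (\ref{fo5}) to the $\tilde C^\beta$ functional describing the rest of the evolution, and invokes the uniform $\tilde C^\beta$ bound on $u$ from Theorem \ref{thm:StoCP} to propagate the estimate through the remaining (genuine or surrogate) steps. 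The main obstacle is this last propagation bound: one must show that the map sending the state at time $\tau_{k+1}$ to $\mathbf{E}[g(Y_T)\mid Y_{\tau_{k+1}}=\cdot]$ (or its surrogate analogue) lies in $\tilde C^\beta$ with norm $\le C|g|_\beta$, uniformly in the partition and in $\sigma$. For the pure Euler scheme this follows from the one-step estimates of \cite{mikz3} together with the $\tilde C^\beta$ regularity of $u$ from Theorem \ref{thm:StoCP}; for the surrogate scheme one needs in addition that the surrogate transition operator $h\mapsto\mathbf{E}[h(\cdot+G(\cdot)\zeta_{i+1}+\dots)\mid\mathcal F_{\tau_i}]$ is bounded on $\tilde C^\beta$ with a constant independent of $\sigma$ — which is exactly where assumption (\ref{fo5}) (comparing it to the true, well-behaved transition operator) is used a second time, now to control the operator norm rather than a single increment. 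Once that uniform boundedness is in hand, the sum telescopes and the $\phi(\sigma)$ and $\delta^{\beta/\alpha-1}$ errors add, as claimed.
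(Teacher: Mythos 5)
Your initial triangle-inequality split and the hybrid-process telescoping are a reasonable starting idea, but they land you exactly on the difficulty the paper is built to avoid, and you do not actually resolve it. To bound each hybrid increment $\mathbf{E}g(Y^{(k+1)}_T)-\mathbf{E}g(Y^{(k)}_T)$ by $(\ref{fo5})$ you need the functional
\begin{equation*}
\Phi_k(v)=\mathbf{E}\bigl[g\bigl(\text{surrogate-scheme evolution on }[\tau_{k+1},T]\text{ started from }\Psi_k(v)\bigr)\,\big|\,\mathcal F_{\tau_k}\bigr]
\end{equation*}
to lie in $\tilde C^\beta$ with a bound $C|g|_\beta$ \emph{uniformly in the number of steps, in $\delta$, and in $\sigma$}. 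This is a regularity-propagation statement for the discrete (Euler and surrogate-Euler) transition semigroups, and it is not established anywhere in the paper, nor does it follow from the tools you cite. Lemma~\ref{lem:expect} bounds the one-step deviation $\mathbf{E}[f(Y_s)-f(Y_{\tau_{i_s}})]$, not the $\tilde C^\beta$ operator norm of the Euler step; and the $\tilde C^\beta$ regularity of $u$ from Theorem~\ref{thm:StoCP} is for the backward equation of the \emph{continuous} process $X$, not for $Y$ or $\tilde Y$. Your suggestion that $(\ref{fo5})$ can be ``used a second time'' to control the surrogate operator norm does not work either: $(\ref{fo5})$ only compares expectations of a fixed $h$ against two laws, and does not by itself give an a priori $\tilde C^\beta\to\tilde C^\beta$ bound with constant independent of $\sigma$.

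The paper's proof sidesteps propagation entirely by never splitting off $Y$. It takes the single solution $u\in\tilde C^\beta(H)$ of the backward Kolmogorov equation for $X$ (with $|u|_\beta\le C|g|_\beta$ from Theorem~\ref{thm:StoCP}) and telescopes
\begin{equation*}
u(T,\tilde Y_T)-u(0,Y_0)=\sum_i\bigl[u(\tau_{i+1},\tilde Y_{\tau_{i+1}})-u(\tau_i,\tilde Y_{\tau_i})\bigr],
\end{equation*}
then inserts, in each summand, the auxiliary state $\tilde Y_{\tau_i}+H^i_{\tau_{i+1}}$ obtained by driving the $i$-th frozen-coefficient step with the \emph{true} increment $Z_{\tau_{i+1}}-Z_{\tau_i}$ instead of the surrogate $\zeta_{i+1}$. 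This splits each summand into a substitution part $D_1$ — which is controlled by applying $(\ref{fo5})$ to the function $h(v)=u(\tau_{i+1},\tilde Y_{\tau_i}+a(\tilde Y_{\tau_i})\Delta\tau+b(\tilde Y_{\tau_i})\Delta W+G(\tilde Y_{\tau_i})v)$, which is in $\tilde C^\beta$ with norm $\le C|g|_\beta$ because $u$ is and $G\in\tilde C^{\beta\vee 1}$ — and an It\^o-residual part $D_{2i}$, which is the usual time-discretization error handled by Lemmas~\ref{le3} and~\ref{lemn3}. The only regularity input is the single, uniform bound on $u$; no regularity of any discrete transition semigroup is required. Your plan would go through if you replaced the hybrid telescoping of $g$ by the paper's telescoping of $u$ and then split the one-step increment rather than the whole path.
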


The same way as Corollary \ref{co2} (see the proof below) we have the
following statement.

\begin{corollary}
Let assumptions of Theorem \ref{main2} hold and $\nu \in (0,\beta ]$. Then
there is a constant $C$ such that for all $g\in \tilde{C}^{\nu }(\mathbf{R}%
^{d})$ 
\begin{equation*}
|\mathbf{E}g(\tilde{Y}_{T})-\mathbf{E}g(X_{T})|\leq C|g|_{\nu }[\delta ^{\nu
(\frac{1}{\alpha }-\frac{1}{\beta })}+\phi (\sigma )^{\frac{\nu }{\beta }}].
\end{equation*}
\end{corollary}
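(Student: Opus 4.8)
The plan is to reduce to Theorem~\ref{main2} by mollifying the test function, exactly as in the proof of Corollary~\ref{co2}, but now carrying the extra substitution term $\phi (\sigma )$ through the optimization. Fix a mollifier $\varphi \in C_{0}^{\infty }(\mathbf{R}^{d})$ with $\int \varphi \,dx=1$ and with sufficiently many vanishing moments (up to order $[\beta ]^{-}$), and for $\varepsilon \in (0,1]$ set $\varphi _{\varepsilon }(x)=\varepsilon ^{-d}\varphi (x/\varepsilon )$ and $g_{\varepsilon }=g\ast \varphi _{\varepsilon }$. Then $g_{\varepsilon }\in \tilde{C}^{\beta }(\mathbf{R}^{d})$, and the standard mollification estimates give a constant $C=C(\varphi ,\beta ,\nu ,d)$ such that
\begin{equation*}
|g-g_{\varepsilon }|_{0}\leq C|g|_{\nu }\,\varepsilon ^{\nu },\qquad |g_{\varepsilon }|_{\beta }\leq C|g|_{\nu }\,\varepsilon ^{\nu -\beta }
\end{equation*}
(the second bound uses $\nu \leq \beta $ and $\varepsilon \leq 1$, so that $\varepsilon ^{\nu -k}\leq \varepsilon ^{\nu -\beta }$ for every $k\leq \lbrack \beta ]^{-}$, and the vanishing moments of $\varphi $ to get $|g|_{\nu }$, rather than merely $|g|_{0}$, as the prefactor).

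Next I would split
\begin{equation*}
|\mathbf{E}g(\tilde{Y}_{T})-\mathbf{E}g(X_{T})|\leq |\mathbf{E}(g-g_{\varepsilon })(\tilde{Y}_{T})|+|\mathbf{E}g_{\varepsilon }(\tilde{Y}_{T})-\mathbf{E}g_{\varepsilon }(X_{T})|+|\mathbf{E}(g_{\varepsilon }-g)(X_{T})|.
\end{equation*}
The first and third terms are at most $|g-g_{\varepsilon }|_{0}\leq C|g|_{\nu }\varepsilon ^{\nu }$. For the middle term, since $g_{\varepsilon }\in \tilde{C}^{\beta }(\mathbf{R}^{d})$, Theorem~\ref{main2} applies and bounds it by $C|g_{\varepsilon }|_{\beta }[\delta ^{\beta /\alpha -1}+\phi (\sigma )]\leq C|g|_{\nu }\varepsilon ^{\nu -\beta }[\delta ^{\beta /\alpha -1}+\phi (\sigma )]$. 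Writing $A=\delta ^{\beta /\alpha -1}+\phi (\sigma )$, this yields
\begin{equation*}
|\mathbf{E}g(\tilde{Y}_{T})-\mathbf{E}g(X_{T})|\leq C|g|_{\nu }\bigl(\varepsilon ^{\nu }+\varepsilon ^{\nu -\beta }A\bigr).
\end{equation*}

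Finally I would optimize in $\varepsilon $. If $A\geq 1$ the assertion is trivial, since then $A^{\nu /\beta }\geq 1$ and $|\mathbf{E}g(\tilde{Y}_{T})-\mathbf{E}g(X_{T})|\leq 2|g|_{0}\leq 2|g|_{\nu }\leq 2|g|_{\nu }A^{\nu /\beta }$. If $A<1$, taking $\varepsilon =A^{1/\beta }\in (0,1]$ balances the two terms and gives $|\mathbf{E}g(\tilde{Y}_{T})-\mathbf{E}g(X_{T})|\leq C|g|_{\nu }A^{\nu /\beta }$. Since $\nu /\beta \leq 1$, subadditivity of $t\mapsto t^{\nu /\beta }$ gives $A^{\nu /\beta }\leq \delta ^{(\beta /\alpha -1)\nu /\beta }+\phi (\sigma )^{\nu /\beta }=\delta ^{\nu (1/\alpha -1/\beta )}+\phi (\sigma )^{\nu /\beta }$, which is the asserted bound. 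The only point requiring a little care is the mollification estimate for the $\tilde{C}^{\beta }$-norm when $\beta >1$, which is exactly the computation already performed in the proof of Corollary~\ref{co2}; since Theorem~\ref{main2} incorporates all the analytic work, there is no substantial obstacle here.
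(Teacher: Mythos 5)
Your argument is correct and follows essentially the same route as the paper: mollify $g$, apply Theorem~\ref{main2} to the mollified test function, and optimize the resulting bound $\varepsilon^{\nu}+\varepsilon^{\nu-\beta}A$ over $\varepsilon$. The only cosmetic difference is your choice of a compactly supported mollifier with finitely many vanishing moments in place of the paper's Schwartz-class mollifier with all moments vanishing (the paper's appeal to Estrada); since $\nu\leq\beta\leq 4$, finitely many vanishing moments suffice, so both choices work.
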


\begin{remark}
(i) Assume the assumptions of Theorem \ref{main2} hold. Since $\lim_{\sigma
\rightarrow 0}\phi (\sigma )=0$, for each $\delta >0$ there is $\sigma
=\sigma (\delta )$ such that $\phi (\sigma (\delta ))\leq \delta ^{\frac{%
\beta }{\alpha }-1}$ and therefore%
\begin{equation*}
|\mathbf{E}g(\tilde{Y}_{T})-\mathbf{E}g(X_{T})|\leq C|g|_{\beta }\delta ^{%
\frac{\beta }{\alpha }-1}.
\end{equation*}%
In particular, if $\phi (\sigma )\leq C\sigma ^{\mu }$ with $\mu >0$ (it is
the case in Example \ref{ex1} for a small jumps $\alpha ^{\prime }$%
-stable-like driving process $Z$ with $\alpha ^{\prime }<\alpha $), then we
can choose $\sigma ^{\mu }=\delta ^{\frac{\beta }{\alpha }-1}$ or $\sigma
=\delta ^{(\frac{\beta }{\alpha }-1)\mu ^{-1}}.$

(ii) In order to study precisely the case of unbounded test functions (like
one in \cite{jkmp}), one would need to solve first the backward Kolmogorov
equation in H\"{o}lder spaces with weights that are defined by the powers of 
$w(x)=\left( 1+|x|^{2}\right) ^{1/2},x\in \mathbf{R}^{d}$.
\end{remark}

Applying Theorem \ref{main2} to the model of Example \ref{ex1} we have

\begin{proposition}
\label{propa1}Let $\alpha <\beta \leq \mu \leq 2\alpha $, and let $%
a^{i},b^{ij}\in \tilde{C}^{\beta }(\mathbf{R}^{d})$, $G^{ij}\in \tilde{C}%
^{\beta \vee 1}(\mathbf{R}^{d}),$ and 
\begin{equation*}
\int_{|\upsilon |\leq 1}|\upsilon |^{\alpha }d\pi +\int_{\left\vert \upsilon
\right\vert \geq 1}|\upsilon |^{\mu }\pi (d\upsilon )<\infty ,
\end{equation*}%
where $\pi $ is the L\'{e}vy measure of the driving process $Z$. For the
approximate Euler scheme in Example \ref{ex1}, there is a constant $C$
(independent of $\sigma ,\delta $) such that for all $g\in \tilde{C}^{\beta
}(\mathbf{R}^{d})$%
\begin{equation*}
|\mathbf{E}g(\tilde{Y}_{T})-\mathbf{E}g(X_{T})|\leq C|g|_{\beta }[\delta ^{%
\frac{\beta }{\alpha }-1}+\int_{|\upsilon |\leq \sigma }|\upsilon |^{\beta
\wedge 3}d\pi ].
\end{equation*}
\end{proposition}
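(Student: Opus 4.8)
The plan is to derive Proposition~\ref{propa1} from Theorem~\ref{main2}, applied to the scheme $\tilde{Y}$ of Example~\ref{ex1}. The hypotheses on $a,b,G$ and on $\pi$ are exactly those of Theorem~\ref{main2}, so the whole matter reduces to verifying the substitution bound (\ref{fo5}) for $\zeta_{i+1}=\tilde{Z}_{\tau_{i+1}}-\tilde{Z}_{\tau_i}$ with $\phi(\sigma)=\int_{|\upsilon|\leq\sigma}|\upsilon|^{\beta\wedge3}\,d\pi$. This $\phi$ does vanish at $\sigma=0$: since $\beta>\alpha$ and $3>\alpha$ we have $\beta\wedge3>\alpha$, hence $|\upsilon|^{\beta\wedge3}\leq|\upsilon|^{\alpha}$ for $|\upsilon|\leq1$ and $\phi(\sigma)\leq\int_{|\upsilon|\leq\sigma}|\upsilon|^{\alpha}\,d\pi\to0$, so Theorem~\ref{main2} produces precisely the asserted estimate. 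Furthermore, because $Z$ and $\tilde{Z}=Z^{\sigma}+R^{\sigma}$ have independent increments and $\tilde{W}$ is independent of $\mathbb{F}$, both $Z_{\tau_{i+1}}-Z_{\tau_i}$ and $\zeta_{i+1}$ are independent of $\mathcal{F}_{\tau_i}$, so (\ref{fo5}) is the same statement as the conditional bound (\ref{kas}); the latter is the content of Example~\ref{ex1} (Lemma~\ref{lemn2}), and it is what remains to be justified.

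Fix $i$ and set $\Delta=\tau_{i+1}-\tau_i$. Split $Z$ by jump size, $Z_t=Z^{\sigma}_t+S_t$, where $Z^{\sigma}$ is the large-jump process of Example~\ref{ex1} and $S$ collects the jumps of modulus $\leq\sigma$ (compensated, so $\mathbf{E}S_t=0$, when $\alpha\in(1,2]$; of finite drift $t\int_{|\upsilon|\leq\sigma}\upsilon\,d\pi$ when $\alpha\leq1$, by $|\upsilon|\leq|\upsilon|^{\alpha}$ on $|\upsilon|\leq1$). The processes $Z^{\sigma},S,R^{\sigma}$ are mutually independent and independent of $\mathcal{F}_{\tau_i}$, so conditioning on $Z^{\sigma}$ gives, with $\bar{h}(y):=\mathbf{E}\,h(Z^{\sigma}_{\tau_{i+1}}-Z^{\sigma}_{\tau_i}+y)$,
\begin{equation*}
\mathbf{E}\big[h(Z_{\tau_{i+1}}-Z_{\tau_i})-h(\zeta_{i+1})\mid\mathcal{F}_{\tau_i}\big]=\mathbf{E}\bar{h}(S_{\Delta})-\mathbf{E}\bar{h}(R^{\sigma}_{\Delta}),\qquad |\bar{h}|_{\beta}\leq|h|_{\beta},
\end{equation*}
the last inequality holding because $\bar{h}$ is an average of translates of $h$. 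Let $u$ solve, on $[0,\Delta]$, the backward equation whose spatial operator is the generator of $R^{\sigma}$ — transport by $\int_{|\upsilon|\leq\sigma}\upsilon\,d\pi$ in the drift regime, diffusion with matrix $(\int_{|\upsilon|\leq\sigma}\upsilon_i\upsilon_j\,d\pi)_{ij}=B^{\sigma}(B^{\sigma})^{\top}$ in the Wiener regime, zero otherwise — with terminal data $u(\Delta,\cdot)=\bar{h}$; then $u(0,0)=\mathbf{E}\bar{h}(R^{\sigma}_{\Delta})$ and $|u(t,\cdot)|_{\beta}\leq|\bar{h}|_{\beta}$, again by averaging of translates. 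Applying It\^o's formula to $u(s,S_s)$ on $[0,\Delta]$ and inserting the equation for $u$, the term $\partial_t u$ exactly absorbs the leading part of the jump compensator of $S$ — its first-order part through the choice of the drift vector, its second-order part through $B^{\sigma}(B^{\sigma})^{\top}=(\int_{|\upsilon|\leq\sigma}\upsilon_i\upsilon_j\,d\pi)_{ij}$ — and what survives is
\begin{equation*}
\mathbf{E}\bar{h}(S_{\Delta})-\mathbf{E}\bar{h}(R^{\sigma}_{\Delta})=\mathbf{E}\int_0^{\Delta}\!\!\int_{|\upsilon|\leq\sigma}\rho(s,S_{s-},\upsilon)\,\pi(d\upsilon)\,ds,
\end{equation*}
where $\rho(s,x,\upsilon)$ is the Taylor remainder of $u(s,\cdot)$ at $x$ of order one (drift regime) or two (Wiener regime), and $\rho(s,x,\upsilon)=\bar{h}(x+\upsilon)-\bar{h}(x)-(\upsilon,\nabla\bar{h}(x))\mathbf{1}_{\{\alpha>1\}}$ in the remaining regime. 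In every case $|\rho(s,x,\upsilon)|\leq C|u(s,\cdot)|_{\beta}|\upsilon|^{\beta\wedge3}\leq C|h|_{\beta}|\upsilon|^{\beta\wedge3}$ — using $1<\beta\leq2$ in the drift regime, $2<\beta\leq4$ in the Wiener regime, and $0<\beta\leq2$ (so $\beta\wedge3=\beta$) otherwise — whence integration gives (\ref{kas}), and Theorem~\ref{main2} concludes.

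The step I expect to be the real obstacle is the It\^o/cancellation argument in the Wiener-replacement regime $\alpha\in(1,2]$, $\beta\in(2,4]$: it is not enough that the second-order terms be small, they must cancel \emph{exactly}, and this is precisely what passing to the backward heat equation for $B^{\sigma}\tilde{W}$ provides, together with the defining identity $B^{\sigma}(B^{\sigma})^{\top}=(\int_{|\upsilon|\leq\sigma}\upsilon_i\upsilon_j\,d\pi)_{ij}$ and with the invariance of the $\tilde{C}^{\beta}$-norm under averaging of translates, which is what makes the constant in (\ref{kas}) uniform in $\sigma$ and $\delta$. The remaining points are bookkeeping: in each regime of Example~\ref{ex1} one checks that $S$ has finite first (resp.\ second) absolute moment so that the compensation and It\^o's formula are legitimate — this follows from $\int_{|\upsilon|\leq1}|\upsilon|^{\alpha}\,d\pi<\infty$ together with $|\upsilon|\leq|\upsilon|^{\alpha}$ (when $\alpha\leq1$) and $|\upsilon|^2\leq|\upsilon|^{\alpha}$ (when $\alpha\leq2$) on $|\upsilon|\leq1$ — and one notes that none of the smoothing operations used enlarges the relevant Lipschitz norm.
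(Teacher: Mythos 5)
Your proposal is correct and follows essentially the paper's route: reduce Proposition~\ref{propa1} to Theorem~\ref{main2} applied to the scheme of Example~\ref{ex1}, the whole content being the verification that (\ref{fo5}) holds with $\phi(\sigma)=\int_{|\upsilon|\leq\sigma}|\upsilon|^{\beta\wedge 3}\,d\pi$, i.e.\ Lemma~\ref{lemn2}. Your observations that $\phi(\sigma)\to 0$ (from $\beta\wedge 3>\alpha$), that the conditional bound reduces to the unconditional one because $\{\tau_i\}$ is deterministic here and the increments are independent of $\mathcal{F}_{\tau_i}$, and that conditioning on $Z^{\sigma}$ reduces matters to comparing $\mathbf{E}\bar h(S_{\Delta})$ with $\mathbf{E}\bar h(R^{\sigma}_{\Delta})$ for $\bar h$ an average of translates of $h$, are all the correct ingredients.

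The one place where you genuinely deviate from the paper is inside the proof of Lemma~\ref{lemn2}, and the deviation is a dual bookkeeping choice rather than a different idea. The paper takes $v(r,x)=\mathbf{E}\,\bar h(\bar Z^{\sigma}_{\Delta}-\bar Z^{\sigma}_r+x)$, which solves the backward equation with the small-jump generator $\bar L$, and applies It\^o along the \emph{replacement} process $R^{\sigma}$, landing on $\mathbf{E}\int(\mathcal R-\bar L)v\,dr$. You instead solve the backward equation with the generator $\mathcal R$ of $R^{\sigma}$ (transport / heat / trivial) for $u$, and apply It\^o along the \emph{small-jump} process $S=\bar Z^{\sigma}$, landing on $\mathbf{E}\int(\mathcal S-\mathcal R)u\,ds$. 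The two integrands are the same Taylor remainder, and the norm bound $|u(s,\cdot)|_\beta\le|\bar h|_\beta\le|h|_\beta$ is the same translation-invariance argument used for $v$ in the paper. Each route buys a slightly different comfort: the paper's value function has an explicit representation as an expectation under $S$ and It\^o is applied to a very smooth integrator; yours has a value function that solves a classical PDE (so smoothness is manifest), at the price of applying It\^o along a jump process, which requires the moment checks you correctly supply ($\int_{|\upsilon|\le\sigma}|\upsilon|\,d\pi<\infty$ when $\alpha\le1$, $\int_{|\upsilon|\le\sigma}|\upsilon|^2\,d\pi<\infty$ when $\alpha\le2$, and $C^2$-regularity of $u$ from $\beta>2$ in the Wiener regime). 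Both give the stated $\phi(\sigma)$.

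One small point worth being explicit about: in the ``otherwise'' regime $R^{\sigma}\equiv 0$, your $u(s,\cdot)=\bar h$ is only $\tilde C^{\beta}$ with $\beta\le 2$, so the It\^o step must be taken in the form appropriate to a pure-jump integrator with a $C^1$ (respectively $C^1$ with H\"older gradient) function; this is legitimate here for the same reason it is legitimate in the paper, but it is the kind of step that should be flagged rather than appealed to silently.
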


\subsection{Approximate jump-adapted Euler scheme}

As in Example \ref{ex1}, for $\sigma \in (0,1)$ we approximate the
increments of the driving process 
\begin{equation*}
Z_{t}=\int_{0}^{t}\int (1-\chi _{\alpha }(\upsilon ))\upsilon p(ds,d\upsilon
)+\int_{0}^{t}\int \chi _{\alpha }(\upsilon )\upsilon q(ds,d\upsilon ),t\in
\lbrack 0,T],
\end{equation*}%
by the increments of%
\begin{equation*}
\tilde{Z}_{t}=Z_{t}^{\sigma }+R_{t}^{\sigma },
\end{equation*}%
with%
\begin{equation*}
Z_{t}^{\sigma }=\int_{0}^{t}\int_{|\upsilon |>\sigma }(1-\chi _{\alpha
}(\upsilon ))\upsilon p(ds,d\upsilon )+\int_{0}^{t}\int_{|\upsilon |>\sigma
}\chi _{\alpha }(\upsilon )\upsilon q(ds,d\upsilon )
\end{equation*}%
and 
\begin{equation*}
R_{t}^{\sigma }=\left\{ 
\begin{array}{ll}
t\int_{|\upsilon |\leq \sigma }\upsilon \pi (d\upsilon ) & \text{if }\alpha
<\beta \in (1,2],\alpha \in (0,1], \\ 
&  \\ 
B^{\sigma }\tilde{W}_{t} & \text{if }\alpha <\beta \in (2,4],\alpha \in
(1,2], \\ 
&  \\ 
0 & \text{otherwise,}%
\end{array}%
\right.
\end{equation*}%
where $B^{\sigma }$ is the square root of the positive definite $m\times m\,$%
-matrix $\left( \int_{|\upsilon |\leq \sigma }\upsilon _{i}\upsilon _{j}d\pi
\right) _{1\leq i,j\leq m}$ and $\tilde{W}_{t}$ is a standard independent
Wiener process in $\mathbf{R}^{m}$.

Given $\sigma \in (0,1),\delta >0,$ consider the following $Z^{\sigma }$%
-jump adapted time discretization (see \cite{MiP88}): $\tau _{0}=0,$%
\begin{equation}
\tau _{i+1}=\inf \left( t>\tau _{i}:\Delta Z_{t}^{\sigma }\neq 0\right)
\wedge (\tau _{i}+\delta )\wedge T.  \label{apf1}
\end{equation}%
In this case the time discretization $\{\tau _{i},i=0,\ldots ,n_{T}\}$ of
the interval $[0,T]$ is random, $\tau _{i}$ are stopping times. We
approximate $X_{t}$ by%
\begin{equation}
\hat{Y}_{t}=X_{0}+\int_{0}^{t}a(\hat{Y}_{\tau _{i_{s}}})ds+\int_{0}^{t}b(%
\hat{Y}_{\tau _{i_{s}}})dW_{s}+\int_{0}^{t}G(\hat{Y}_{\tau _{i_{s}}})d\tilde{%
Z}_{s},t\in \lbrack 0,T].  \label{apf2}
\end{equation}

The following error estimate holds.

\begin{theorem}
\label{propa2}Let $\alpha <\beta \leq \mu \leq 2\alpha $, and let $%
a^{i},b^{ij}\in \tilde{C}^{\beta }(\mathbf{R}^{d})$, $G^{ij}\in \tilde{C}%
^{\beta \vee 1}(\mathbf{R}^{d}),$ and 
\begin{equation*}
\int_{|\upsilon |\leq 1}|\upsilon |^{\alpha }d\pi +\int_{\left\vert \upsilon
\right\vert \geq 1}|\upsilon |^{\mu }\pi (d\upsilon )<\infty ,
\end{equation*}%
where $\pi $ is the L\'{e}vy measure of the driving process $Z$.

Then there is a constant $C$ (independent of $\sigma ,\delta $) such that
for all $g\in \tilde{C}^{\beta }(\mathbf{R}^{d})$%
\begin{equation*}
|\mathbf{E}g(\hat{Y}_{T})-\mathbf{E}g(X_{T})|\leq C|g|_{\beta }[\{(\delta
\wedge \lambda _{\sigma }^{-1})\tilde{\lambda}_{\sigma }\}^{\frac{\beta }{%
\alpha }-1}+\int_{|\upsilon |\leq \sigma }|\upsilon |^{\beta \wedge 3}d\pi ],
\end{equation*}%
where $\lambda _{\sigma }=\pi \left( \left\{ |\upsilon |>\sigma \right\}
\right) $ and 
\begin{equation*}
\tilde{\lambda}_{\sigma }=1+1_{\alpha \in (1,2)}|\int_{\sigma <|\upsilon
|\leq 1}\upsilon d\pi |.
\end{equation*}
\end{theorem}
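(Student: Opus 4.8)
Write‑up proposal for Theorem~\ref{propa2}.

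The plan is to run the proof of Theorem~\ref{thm:main} with the jump‑adapted scheme $\hat{Y}$ in place of $Y$, keeping the same backward Kolmogorov solution $u$ for $X$, and to isolate the two new effects: the substitution of $Z$ by $\tilde{Z}=Z^{\sigma}+R^{\sigma}$, and the randomness of the $Z^{\sigma}$‑jump‑adapted partition. Let $u$ solve $\partial _{t}u+Lu=0$ on $[0,T]$, $u(T,\cdot)=g$, where for $y\in \mathbf{R}^{d}$ the operator $L^{y}$ freezes all coefficients of the generator of $X$ at $y$ and $Lv(x)=L^{x}v(x)$. By Theorem~\ref{thm:StoCP} one has $u\in \tilde{C}^{\beta }(H)$ (the extra regularity of $G$ supplying what is needed in the cases with a Brownian term), $|u|_{\beta }\leq C|g|_{\beta }$, and It\^{o}'s formula applies to $u(t,\hat{Y}_{t})$. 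Since $\hat{Y}_{0}=X_{0}$, $u(0,X_{0})=\mathbf{E}g(X_{T})$, $u(T,\cdot)=g$ and $\partial _{t}u=-Lu$, It\^{o}'s formula along $\hat{Y}$ (with drivers $a(\hat{Y}_{\tau _{i_{s}}})\,ds$, $b(\hat{Y}_{\tau _{i_{s}}})\,dW_{s}$, $G(\hat{Y}_{\tau _{i_{s}}})\,d\tilde{Z}_{s}$) and taking expectations give
\[
\mathbf{E}g(\hat{Y}_{T})-\mathbf{E}g(X_{T})=\mathbf{E}\int_{0}^{T}\bigl(\hat{L}_{s}u-L^{\hat{Y}_{s}}u\bigr)(s,\hat{Y}_{s})\,ds,
\]
with $\hat{L}_{s}$ the generator with coefficients frozen at $\tau _{i_{s}}$ and driver $\tilde{Z}$. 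I would split the integrand as $\hat{L}_{s}u-L^{\hat{Y}_{s}}u=(\hat{L}_{s}u-L^{\hat{Y}_{\tau _{i_{s}}}}u)+(L^{\hat{Y}_{\tau _{i_{s}}}}u-L^{\hat{Y}_{s}}u)$ and call the two resulting expectations $E_{1}$ (substitution) and $E_{2}$ (discretization).

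For $E_{1}$ the terms coming from $a$ and $b$ cancel, so only the L\'{e}vy parts of $\tilde{Z}$ and $Z$ differ; going through the three alternatives for $R^{\sigma }$ in Example~\ref{ex1} one checks that the difference equals, exactly, $-\int_{|\upsilon |\leq \sigma }[u(s,x+G\upsilon )-u(s,x)-d_{\sigma }(\upsilon )(\nabla u(s,x),G\upsilon )-b_{\sigma }\tfrac{1}{2}u^{(2)}(s,x;G\upsilon ,G\upsilon )]\pi (d\upsilon )$ with $G=G(\hat{Y}_{\tau _{i_{s}}})$, $d_{\sigma }(\upsilon )\in \{0,1,\chi _{\alpha }(\upsilon )\}$ and $b_{\sigma }=1$ precisely when $R^{\sigma }=B^{\sigma }\tilde{W}$ (the $R^{\sigma }$‑drift supplying the first‑order term in the drift case). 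Taylor's theorem to the appropriate order and $u\in \tilde{C}^{\beta }$ bound this by $C|u|_{\beta }|G|_{0}^{\beta \wedge 3}|\upsilon |^{\beta \wedge 3}$, so $|E_{1}|\leq C|g|_{\beta }\int_{|\upsilon |\leq \sigma }|\upsilon |^{\beta \wedge 3}\pi $; this is exactly Lemma~\ref{lemn2} and the bound \eqref{fo5} from Example~\ref{ex1}.

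For $E_{2}=\sum_{i}\mathbf{E}\int_{\tau _{i}\wedge T}^{\tau _{i+1}\wedge T}[L^{\hat{Y}_{\tau _{i}}}u-L^{\hat{Y}_{s}}u](s,\hat{Y}_{s})\,ds$ I would argue step by step, conditioning on $\mathcal{F}_{\tau _{i}}$. Two facts drive it. (i) By the strong Markov property the first $Z^{\sigma }$‑jump after $\tau _{i}$ is exponential of rate $\lambda _{\sigma }$, so $\tau _{i+1}-\tau _{i}=\min (\delta ,\mathrm{Exp}(\lambda _{\sigma }),T-\tau _{i})$; hence $\mathbf{E}[(\tau _{i+1}-\tau _{i})^{p}\mid \mathcal{F}_{\tau _{i}}]\leq C(\delta \wedge \lambda _{\sigma }^{-1})^{p}$ for $p>0$, $\mathbf{E}[\tau _{i+1}-\tau _{i}\mid \mathcal{F}_{\tau _{i}}]\geq c((\delta \wedge \lambda _{\sigma }^{-1})\wedge (T-\tau _{i}))$, and $\mathbf{E}[n_{T}]\leq CT/(\delta \wedge \lambda _{\sigma }^{-1})$. (ii) On $(\tau _{i},\tau _{i+1})$ the process $Z^{\sigma }$ does not jump, so $\hat{Y}_{s}-\hat{Y}_{\tau _{i}}$ is built only from the drift of $a$, the $\chi _{\alpha }$‑compensator drift $-G(\hat{Y}_{\tau _{i}})\int_{\sigma <|\upsilon |\leq 1}\upsilon \,\pi $ of $Z^{\sigma }$ (present only for $\alpha \in (1,2]$), the $R^{\sigma }$‑drift or $R^{\sigma }$‑Wiener term, and $b(\hat{Y}_{\tau _{i}})\,dW$; consequently $\mathbf{E}[|\hat{Y}_{s}-\hat{Y}_{\tau _{i}}|\mid \mathcal{F}_{\tau _{i}}]\leq C(\tilde{\lambda}_{\sigma }(s-\tau _{i})+(\int_{|\upsilon |\leq \sigma }|\upsilon |^{2}\pi )^{1/2}(s-\tau _{i})^{1/2})$, since the $\chi _{\alpha }$‑drift is the only term that can grow as $\sigma \to 0$ and is precisely the one recorded in $\tilde{\lambda}_{\sigma }$, while $\int_{|\upsilon |\leq \sigma }|\upsilon |\pi \leq \sigma ^{1-\alpha }\int_{|\upsilon |\leq 1}|\upsilon |^{\alpha }\pi $ (used when $\alpha \leq 1$) and $|B^{\sigma }|\leq (\int_{|\upsilon |\leq 1}|\upsilon |^{\alpha }\pi )^{1/2}$ are $\sigma $‑controlled. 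Since the one‑step estimates of \cite{mikz3} use only the frozen‑coefficient structure on one time interval and the L\'{e}vy--It\^{o} decomposition of the driver there, they carry over to each random interval $[\tau _{i},\tau _{i+1}]$ (whose driver is a drift/Wiener increment followed by one jump) and bound the conditional one‑step error by $C|u|_{\beta }h_{i}\rho _{i}^{\beta -\alpha }$ with $h_{i}=\tau _{i+1}\wedge T-\tau _{i}\wedge T$ and $\rho _{i}$ the local scale from (ii) at $s=\tau _{i+1}$. Summing over $i$ with (i), and using $\beta -\alpha \geq \beta /\alpha -1$ when $\alpha \geq 1$, the drift part of $\rho _{i}^{\beta -\alpha }$ produces $\{(\delta \wedge \lambda _{\sigma }^{-1})\tilde{\lambda}_{\sigma }\}^{\beta /\alpha -1}$; the Wiener part produces $\{(\delta \wedge \lambda _{\sigma }^{-1})\int_{|\upsilon |\leq \sigma }|\upsilon |^{2}\pi \}^{(\beta -\alpha )/2}$, which a direct computation, using $\delta \wedge \lambda _{\sigma }^{-1}\leq \lambda _{\sigma }^{-1}$, $\lambda _{\sigma }\leq C\sigma ^{-\alpha }$, and the definition of $B^{\sigma }$ (so that over the natural step $\sim \lambda _{\sigma }^{-1}$ the Wiener increment matches the original $(\lambda _{\sigma }^{-1})^{1/\alpha }$ scale), absorbs into $\{(\delta \wedge \lambda _{\sigma }^{-1})\tilde{\lambda}_{\sigma }\}^{\beta /\alpha -1}+\int_{|\upsilon |\leq \sigma }|\upsilon |^{\beta \wedge 3}\pi $; one may assume $(\delta \wedge \lambda _{\sigma }^{-1})\tilde{\lambda}_{\sigma }\leq 1$ throughout, the other case being trivial since $|E_{2}|\leq C|g|_{\beta }$. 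Thus $|E_{2}|\leq C|g|_{\beta }(\{(\delta \wedge \lambda _{\sigma }^{-1})\tilde{\lambda}_{\sigma }\}^{\beta /\alpha -1}+\int_{|\upsilon |\leq \sigma }|\upsilon |^{\beta \wedge 3}\pi )$, and adding $E_{1}$ gives the claim.

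The step I expect to be the main obstacle is the last one: checking that the one‑step estimates of \cite{mikz3}, stated for the simple scheme driven by $Z$, do transfer verbatim to the random intervals of the $Z^{\sigma }$‑jump‑adapted partition with the correct local scale $\rho _{i}$, and — more delicately — the bookkeeping that turns the per‑step bounds into the stated two‑term form while keeping every constant independent of $\sigma $ and $\delta $; in particular, matching the Wiener‑substitution contribution (and, for $\alpha \le 1$, the small‑drift contribution) against $\{(\delta \wedge \lambda _{\sigma }^{-1})\tilde{\lambda}_{\sigma }\}^{\beta /\alpha -1}+\int_{|\upsilon |\leq \sigma }|\upsilon |^{\beta \wedge 3}\pi $ in the regime $\delta <\lambda _{\sigma }^{-1}$. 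The exponential inter‑jump moments in (i) and the bound on $E_{1}$ are, by contrast, routine.
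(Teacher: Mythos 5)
Your plan has the right skeleton — use $u$ solving the backward Kolmogorov equation, apply It\^{o} along $\hat Y$, and split off a substitution term and a discretization term — but the decomposition you adopt and, more importantly, the way you propose to bound the discretization term diverge from the paper in places where the details actually matter, and the discretization estimate as you sketch it does not close.

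The paper does not work with the generator difference $\hat L_s u - L^{\hat Y_s}u$ directly. It telescopes $u(T,\hat Y_T)-u(0,X_0)=\sum_i[u(\tau_{i+1},\hat Y_{\tau_{i+1}})-u(\tau_i,\hat Y_{\tau_i})]$ and inserts, at each $\tau_{i+1}$, the auxiliary endpoint $\hat Y_{\tau_i}+H^i_{\tau_{i+1}}$ where $H^i$ is driven by the \emph{original} $Z$ (not $\tilde Z$). This produces $D_1=\sum_i[u(\tau_{i+1},\hat Y_{\tau_{i+1}})-u(\tau_{i+1},\hat Y_{\tau_i}+H^i_{\tau_{i+1}})]$, controlled by Lemma~\ref{lemn2} (the law of $\tilde Z_{\tau_{i+1}}-\tilde Z_{\tau_i}$ versus $Z_{\tau_{i+1}}-Z_{\tau_i}$, stated for $\mathbb{F}^{Z^\sigma}$-stopping times), and $D_{2i}=u(\tau_{i+1},\hat Y_{\tau_i}+H^i_{\tau_{i+1}})-u(\tau_i,\hat Y_{\tau_i})$, which after It\^{o} and $\partial_t u=-Lu$ becomes $\mathbf E\int_{\tau_i}^{\tau_{i+1}}[h(V_r)-h(V_{\tau_i})]\,dr$ for $h=\partial_t u,\,L_{\hat Y_{\tau_i}}u\in\tilde C^{\beta-\alpha}$ with $V$ a frozen-coefficient $Z$-driven process. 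The whole weight of the theorem then rests on Lemma~\ref{lemn30}, which bounds that conditional expectation.

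The gap in your proposal is precisely there. You propose to bound the per-step error by $C|u|_\beta h_i\,\rho_i^{\beta-\alpha}$ with $\rho_i\sim\tilde\lambda_\sigma h_i+(\int_{|\upsilon|\le\sigma}|\upsilon|^2\pi)^{1/2}h_i^{1/2}$, i.e.\ you raise the local scale to the power $\beta-\alpha$ (the H\"older exponent of $Lu$). Two problems. First, the exponent: the theorem needs $(\tilde\lambda_\sigma(\delta\wedge\lambda_\sigma^{-1}))^{\beta/\alpha-1}$, and your $\rho_i^{\beta-\alpha}$ gives instead exponent $\beta-\alpha$. For $\alpha\ge1$ one has $\beta-\alpha\ge\beta/\alpha-1$ so (assuming the base is $\le1$) your bound is not wrong, only stronger than needed; but for $\alpha<1$ the inequality reverses and $\rho_i^{\beta-\alpha}$ is strictly \emph{weaker} than the target. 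The correct exponent $\beta/\alpha-1$ cannot be obtained from a pointwise H\"older bound on $Lu$; it requires the mollification/optimization trick in $\varepsilon$ exactly as in Lemma~\ref{lemn30} (and as in Lemma~\ref{lem:expect}). Second, and independently, your reduction never addresses why the conditional expectation $\mathbf E[\int_{\tau_i}^{\tau_{i+1}}h(V_r)\,dr\mid\mathcal F_{\tau_i}]$ picks up a compensator contribution at all. $\int_{\tau_i}^{\tau_{i+1}}M_s\,ds$ is \emph{not} a martingale, and because $\tau_{i+1}$ is determined by $Z^\sigma$ the increment $M_{\tau_{i+1}}-M_{\tau_i}$ is correlated with $\tau_{i+1}-\tau_i$. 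The paper integrates by parts, uses the independence of $Z^\sigma$ and $\bar Z^\sigma=Z-Z^\sigma$, and the fact that $Z^\sigma$ has no jump on $(\tau_i,\tau_{i+1})$, to identify this contribution as $-\mathbf E[\int_{\tau_i}^{\tau_{i+1}}(U^\sigma_s-U^\sigma_{\tau_i})\,ds\mid\mathcal F_{\tau_i}]$ with $U^\sigma$ the compensator drift — this is the precise mechanism that produces the factor $\tilde\lambda_\sigma$. Your item (ii) records the same intuition (the $\chi_\alpha$-compensator drift is the only $\sigma$-unbounded contribution on a no-jump interval), but you apply it to the pathwise growth of $\hat Y$ rather than to the martingale in an It\^{o} decomposition of $h(V_r)$, and that does not yield a proof. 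Until you have a statement and proof equivalent in strength to Lemma~\ref{lemn30}, together with the two-sided step-size estimates of Lemma~\ref{le2} and Corollary~\ref{co5} that turn the per-step bound into a telescoping sum with constant independent of $\sigma,\delta$, the argument is incomplete.

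The $E_1$ part is fine: your direct Taylor comparison of the two jump operators gives the same $\phi(\sigma)=\int_{|\upsilon|\le\sigma}|\upsilon|^{\beta\wedge3}\,d\pi$ bound as the paper's Lemma~\ref{lemn2}, just organized as a pointwise generator estimate instead of a law-of-increments estimate.
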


In particular, the following statement holds.

\begin{corollary}
\label{co4}Suppose the assumptions of Theorem \ref{propa2} hold.

(i) If $\delta =T$ (only jump moments are chosen for the time
discretization), then%
\begin{equation*}
|\mathbf{E}g(\hat{Y}_{T})-\mathbf{E}g(X_{T})|\leq C|g|_{\beta }[\left( \frac{%
\tilde{\lambda}_{\sigma }}{\lambda _{\sigma }}\right) {}^{\frac{\beta }{%
\alpha }-1}+\int_{|\upsilon |\leq \sigma }|\upsilon |^{\beta \wedge 3}d\pi ].
\end{equation*}

(ii) If $\sup_{\sigma \in (0,1)}|\int_{\sigma <|\upsilon |\leq 1}\upsilon
d\pi |<\infty $ for $\alpha \in (1,2)$, then 
\begin{equation*}
|\mathbf{E}g(\hat{Y}_{T})-\mathbf{E}g(X_{T})|\leq C|g|_{\beta }[(\delta
\wedge \lambda _{\sigma }^{-1}){}^{\frac{\beta }{\alpha }-1}+\int_{|\upsilon
|\leq \sigma }|\upsilon |^{\beta \wedge 3}d\pi ],
\end{equation*}%
where $\lambda _{\sigma }=\pi \left( \left\{ |\upsilon |>\sigma \right\}
\right) .$
\end{corollary}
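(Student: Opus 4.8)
The plan is to derive Corollary~\ref{co4} as a direct specialization of Theorem~\ref{propa2}, so no new analytic machinery is needed; the only work is to simplify the abstract bound $\{(\delta\wedge\lambda_\sigma^{-1})\tilde\lambda_\sigma\}^{\beta/\alpha-1}+\int_{|\upsilon|\le\sigma}|\upsilon|^{\beta\wedge3}\,d\pi$ under the two extra hypotheses.

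For part (i), I would set $\delta=T$ in Theorem~\ref{propa2}. Since $\tilde\lambda_\sigma\ge1$ and $\lambda_\sigma=\pi(\{|\upsilon|>\sigma\})$, for $\sigma$ small enough $\lambda_\sigma^{-1}$ is bounded (or at least $\le T$ eventually); more to the point one simply observes $\delta\wedge\lambda_\sigma^{-1}=T\wedge\lambda_\sigma^{-1}$, and absorbing the fixed constant $T$ into $C$ gives $(\delta\wedge\lambda_\sigma^{-1})\tilde\lambda_\sigma\le C\,\tilde\lambda_\sigma/\lambda_\sigma$, whence $\{(\delta\wedge\lambda_\sigma^{-1})\tilde\lambda_\sigma\}^{\beta/\alpha-1}\le C(\tilde\lambda_\sigma/\lambda_\sigma)^{\beta/\alpha-1}$ because $\beta/\alpha-1>0$ so $x\mapsto x^{\beta/\alpha-1}$ is increasing. (If one prefers to be careful about whether $\lambda_\sigma^{-1}\le T$, note that for the scheme to be meaningful one takes $\sigma$ small so that $\lambda_\sigma$ is large; in any case the factor $T^{\beta/\alpha-1}$ or $1$ is a harmless constant in $C$.) This yields the stated inequality in (i).

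For part (ii), the hypothesis $\sup_{\sigma\in(0,1)}\bigl|\int_{\sigma<|\upsilon|\le1}\upsilon\,d\pi\bigr|<\infty$ in the case $\alpha\in(1,2)$ makes $\tilde\lambda_\sigma=1+\mathbf 1_{\alpha\in(1,2)}\bigl|\int_{\sigma<|\upsilon|\le1}\upsilon\,d\pi\bigr|$ bounded by a constant uniformly in $\sigma$; when $\alpha\notin(1,2)$ one has $\tilde\lambda_\sigma=1$ outright. So in all cases $\tilde\lambda_\sigma\le C$, and therefore $(\delta\wedge\lambda_\sigma^{-1})\tilde\lambda_\sigma\le C(\delta\wedge\lambda_\sigma^{-1})$, giving $\{(\delta\wedge\lambda_\sigma^{-1})\tilde\lambda_\sigma\}^{\beta/\alpha-1}\le C(\delta\wedge\lambda_\sigma^{-1})^{\beta/\alpha-1}$ again by monotonicity of the power. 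Plugging this into Theorem~\ref{propa2} gives the assertion in (ii).

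There is no real obstacle here: the corollary is purely a matter of bounding $\tilde\lambda_\sigma$ and of the elementary monotonicity of $t\mapsto t^{\beta/\alpha-1}$ on $[0,\infty)$ valid since $\beta>\alpha$. The only point requiring a half-sentence of care is the step $\delta\wedge\lambda_\sigma^{-1}\le C$ (or $\le T$) used implicitly when raising the product to a positive power after extracting the bounded factor — and, in part (i), the observation that with $\delta=T$ the term $\delta\wedge\lambda_\sigma^{-1}=T\wedge\lambda_\sigma^{-1}\le\lambda_\sigma^{-1}$ so that the ratio $\tilde\lambda_\sigma/\lambda_\sigma$ appears. I would write the two parts in two short paragraphs, each one line of algebra, citing Theorem~\ref{propa2}.
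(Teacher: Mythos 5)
Your proposal is correct and is exactly the intended argument: the paper offers no separate proof of Corollary~\ref{co4}, treating it as a direct specialization of Theorem~\ref{propa2}, and you do precisely that — using $T\wedge\lambda_\sigma^{-1}\le\lambda_\sigma^{-1}$ and monotonicity of $x\mapsto x^{\beta/\alpha-1}$ in (i), and uniform boundedness of $\tilde\lambda_\sigma$ in (ii). The brief detour in (i) about whether $\lambda_\sigma^{-1}\le T$ is unnecessary (the one-line bound $T\wedge\lambda_\sigma^{-1}\le\lambda_\sigma^{-1}$ suffices), but this is a stylistic quibble, not a gap.
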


\subsection{Outline of Proof of Theorem \protect\ref{thm:main}}

\label{sec:outline}

To prove Theorem~\ref{thm:main}, as in \cite{MiP911} and \cite{mikz3}, the
solution to the backward Kolmogorov equation associated with $X_{t}$ is
used. First we introduce the operator of the Kolmogorov equation associated
with $X_{t}$.

For $u\in \tilde{C}^{\beta }(H),\beta >\alpha $, denote 
\begin{eqnarray*}
L_{z}u(t,x) &=&(a(z),\nabla _{x}u(t,x))+\frac{1}{2}%
\sum_{i,j=1}^{d}(b^{i}(z),b^{j}(z))\partial _{ij}^{2}u(x) \\
&&+\int_{\mathbf{R}_{0}^{m}}\big[u(t,x+G(z)\upsilon )-u(t,x)-\chi _{\alpha
}(\upsilon )(\nabla _{x}u(t,x),G(z)\upsilon ))\big]\pi (d\upsilon ), \\
Lu(t,x) &=&L_{x}u(t,x)=L_{z}u(t,x)|_{z=x},
\end{eqnarray*}%
where $b^{i}(z)=(b^{ij}(z))_{1\leq j\leq m},i=1,\ldots ,d.$

\begin{remark}
\label{lrenew1}Under assumptions of Theorem \ref{thm:main}, there exists a
unique strong solution to equation \textup{(\ref{one})} and the stochastic
process 
\begin{equation*}
u(X_{t})-\int_{0}^{t}Lu(X_{s})ds,\forall u\in \tilde{C}^{\beta }(\mathbf{R}%
^{d})
\end{equation*}%
with $\beta >\alpha $ is a martingale. The operator $L$ is the generator of $%
X_{t}$ defined in \textup{(\ref{one})}.
\end{remark}

If $v(t,x),(t,x)\in H$ satisfies the backward Kolmogorov equation 
\begin{eqnarray*}
\big(\partial _{t}+L\big)v(t,x) &=&0,\quad 0\leq t\leq T, \\
v(T,x) &=&g(x),
\end{eqnarray*}%
then by It\^{o}'s formula 
\begin{equation*}
\mathbf{E}[g(Y_{T})]-\mathbf{E}[g(X_{T})]=\mathbf{E}[v(T,Y_{T})-v(0,Y_{0})]=%
\mathbf{E}\big[\int_{0}^{T}(\partial _{t}+L_{Y_{\tau _{i_{s}}}})v(s,Y_{s})ds%
\big].
\end{equation*}%
The regularity of $v$ determines the one-step estimate and the rate of
convergence of the approximation.


\section{Backward Kolmogorov Equation}


In Lipshitz spaces $\tilde{C}^{\beta }(H),$ consider the backward Kolmogorov
equation associated with $X_{t}$: 
\begin{eqnarray}
\big(\partial _{t}+L\big)u(t,x) &=&f(t,x),  \label{eq1} \\
u(T,x) &=&g(x).  \notag
\end{eqnarray}

\begin{definition}
\label{def1}Let $f,g$ be measurable and bounded functions. We say that $u\in 
\tilde{C}^{\beta }(H)$ with $\beta >\alpha $ is a solution to $(\ref{eq1})$
if 
\begin{equation}
u(t,x)=g(x)+\int_{t}^{T}\big[Lu(s,x)-f(s,x)\big]ds,\forall (t,x)\in H.
\label{defs}
\end{equation}
\end{definition}


First we show that $L$:$\tilde{C}^{\beta }(H)\rightarrow $ $\tilde{C}^{\beta
-\alpha }(H)$ is continuous.

\begin{lemma}
\label{le3}Let $\alpha <\beta \leq \mu \leq 2\alpha ,$%
\begin{equation*}
\int_{|\upsilon |\leq 1}|\upsilon |^{\alpha }d\pi +\int_{|\upsilon
|>1}|\upsilon |^{\mu }d\pi <\infty
\end{equation*}%
and $a^{i},b^{ij}\in \tilde{C}^{\beta }(\mathbf{R}^{d}),G^{ij}\in \tilde{C}%
^{\beta \vee 1}(\mathbf{R}^{d})$. Then for any $v\in \tilde{C}^{\beta }(%
\mathbf{R}^{d})$ we have $Lv\in \tilde{C}^{\beta -\alpha }(\mathbf{R}^{d})$
and there is a constant independent of $v$ such that%
\begin{equation*}
|Lv|_{\beta -\alpha }\leq C|v|_{\beta }.
\end{equation*}
\end{lemma}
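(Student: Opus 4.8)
The plan is to split $L$ into its local part $L^0 v(x) = (a(x),\nabla v(x)) + \tfrac12\sum_{i,j}(b^i(x),b^j(x))\partial^2_{ij}v(x)$ and its nonlocal part $I v(x) = \int_{\mathbf{R}_0^m}\bigl[v(x+G(x)\upsilon)-v(x)-\chi_\alpha(\upsilon)(\nabla v(x),G(x)\upsilon)\bigr]\pi(d\upsilon)$, and estimate each separately. For the local part one uses only the elementary fact that $\tilde C^\beta(\mathbf{R}^d)$ is a Banach algebra in the appropriate graded sense: since $\beta-\alpha \le \beta$ (as $\alpha>0$) and $2\alpha\ge\beta$ forces $\beta-\alpha\le\alpha\le 2$, the coefficients $a^i\in\tilde C^\beta\subset\tilde C^{\beta-\alpha}$ multiply $\partial_i v\in\tilde C^{\beta-1}\subset\tilde C^{\beta-\alpha}$ (using $\alpha\ge 1$ when the drift is present, i.e.\ $\alpha\in[1,2]$), and similarly $(b^i,b^j)\in\tilde C^\beta$ multiplies $\partial^2_{ij}v\in\tilde C^{\beta-2}\subset\tilde C^{\beta-\alpha}$ (using $\alpha\le 2$); note $b\equiv 0$ unless $\alpha=2$, so when second derivatives appear $\beta-\alpha=\beta-2\ge 0$. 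Hence $|L^0v|_{\beta-\alpha}\le C|v|_\beta$ by the product rule in H\"older spaces.

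The nonlocal part is the main obstacle and requires splitting the integral at $|\upsilon|=1$ and treating small and large jumps with different Taylor expansions, the order of expansion depending on where $\alpha$ (equivalently $\beta$, via $\alpha<\beta\le 2\alpha$) sits relative to $1$ and $2$. For the large-jump part $\{|\upsilon|>1\}$ there is no compensator ($\chi_\alpha=0$ there when $\alpha\le 1$; and when $\alpha\in(1,2]$ one still has $\chi_\alpha=0$ on $|\upsilon|>1$), so the integrand is simply $v(x+G(x)\upsilon)-v(x)$, bounded by $2|v|_0$ and, after differentiating in $x$ up to order $[\beta-\alpha]^-$ and taking H\"older quotients, controlled by $C|v|_\beta(1+|\upsilon|)^{[\beta]^-+\{\beta\}^+}\le C|v|_\beta(1+|\upsilon|)^\mu$ using $G\in\tilde C^{\beta\vee1}$ and the chain rule; integrability against $\int_{|\upsilon|>1}|\upsilon|^\mu\,d\pi<\infty$ then gives the bound. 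For the small-jump part $\{|\upsilon|\le1\}$ one Taylor-expands: when $\alpha\in(1,2]$ one subtracts the first-order term (which is exactly the compensator) and is left with a second-order remainder $\sim \tfrac12 v^{(2)}(x+\theta G(x)\upsilon;G(x)\upsilon,G(x)\upsilon)$ bounded by $C|v|_\beta|G(x)\upsilon|^{\beta\wedge2}$ after exploiting the H\"older continuity of the top derivative of $v$ (here using $\beta-1\le\alpha$ so the quotient is at most $|\upsilon|^{\beta-\alpha+\text{(derivative order)}}$ — the exponents are engineered so that the power of $|\upsilon|$ is $\ge\alpha$), integrable against $\int_{|\upsilon|\le1}|\upsilon|^\alpha\,d\pi$; when $\alpha\in(0,1]$ there is no compensator and one uses the first-order expansion $v(x+G(x)\upsilon)-v(x)\sim(\nabla v(x+\theta G(x)\upsilon),G(x)\upsilon)$, bounded by $C|v|_\beta|\upsilon|$ and, for the H\"older estimate, by $C|v|_\beta|\upsilon|^{\beta\wedge(1+\{\ldots\})}$ with the exponent again $\ge\alpha$.

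The genuinely delicate point, which I would carry out carefully, is the H\"older estimate on the small-jump integral: one must differentiate $x\mapsto v(x+G(x)\upsilon)$ up to order $[\beta-\alpha]^-$, which produces terms involving derivatives of $G$ (hence the hypothesis $G^{ij}\in\tilde C^{\beta\vee1}$ rather than merely $\tilde C^\beta$) multiplied by derivatives of $v$ evaluated at the shifted point, and then estimate the difference of two such expressions at $x$ and $\tilde x$ by adding and subtracting, using both the Lipschitz continuity of the coefficients and the top-order H\"older seminorm of $v$; the shift $G(x)\upsilon$ versus $G(\tilde x)\upsilon$ must itself be controlled, contributing a factor $|x-\tilde x|^{\{\beta\}^+}|\upsilon|^{\{\beta\}^+}$ or $|x-\tilde x|\,|\upsilon|$ as appropriate. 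Keeping track of all cases ($\beta\in(\alpha,1]$, $(1,2]$, $(2,3]$, $(3,4]$, intersected with $\alpha\in(0,1],(1,2]$) and verifying in each that the resulting power of $|\upsilon|$ matches an integrable weight ($|\upsilon|^\alpha$ near $0$, $|\upsilon|^\mu$ near $\infty$) is the bulk of the work; summing the local and nonlocal bounds yields $|Lv|_{\beta-\alpha}\le C|v|_\beta$.
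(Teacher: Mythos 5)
Your decomposition of $L$ into a local part $L^{0}$ and a nonlocal part $B$, and your estimate of $L^{0}v$ by the graded Banach-algebra property of the Lipschitz scale, agree with the paper. The divergence is entirely in how the nonlocal piece is handled. The paper does not re-derive $|Bv|_{\beta-\alpha}\le C|v|_{\beta}$ from scratch: for $\beta-\alpha\notin\mathbf{N}$ it simply cites Proposition 13 of \cite{mikz3}, and for the remaining boundary cases $\beta-\alpha=1$ (i.e.\ $\beta=1+\alpha$, $\alpha\in[1,2]$) and $\beta-\alpha=2$ (i.e.\ $\beta=4$, $\alpha=2$) — which lie outside the cited proposition because $\tilde C^{k}$ with $k$ an integer is a genuine Lipschitz space — it gives a short direct argument by differentiating $Bv$ one more time and bounding $\nabla(Bv)$ in sup-norm. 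Your plan, by contrast, re-derives the nonlocal estimate via Taylor expansion split at $|\upsilon|=1$ (with the order of expansion keyed to whether $\alpha\le1$ or $\alpha>1$); that is essentially a proof of the cited Proposition 13 itself, so it is correct and more self-contained but considerably longer. Two small cautions if you carry it out: (i) at the integer endpoints $\beta-\alpha\in\{1,2\}$ the top-order seminorm you must control is a Lipschitz quotient, and the cleanest route is the paper's — differentiate $Bv$ once more and bound the resulting expression in sup-norm, rather than verifying the Lipschitz estimate by adding-and-subtracting — since $\beta-\alpha\le\alpha\le2$ guarantees only these two integer values can occur; (ii) your claimed large-jump power $(1+|\upsilon|)^{\beta}$ is overly generous (the chain rule produces at most $[\beta-\alpha]^{-}$ factors of $\upsilon$ and the Hölder quotient adds $\{\beta-\alpha\}^{+}$, so $(1+|\upsilon|)^{\beta-\alpha}$ suffices), but since $\beta\le\mu$ the conclusion still holds.
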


\begin{proof}
Let 
\begin{equation*}
Bv(x)=\int \left[ v(x+G\left( x\right) \upsilon )-v(x)-\chi _{\alpha
}(\upsilon )(\nabla v(x),G(x)\upsilon )\right] d\pi \text{.}
\end{equation*}%
Then 
\begin{equation*}
Lv=Bv+(a(x),\nabla v(x))+\frac{1}{2}(b^{i}(x),b^{j}(x))\partial
_{ij}^{2}v(x).
\end{equation*}%
By Proposition 13 in \cite{mikz3}, $Bv\in \tilde{C}^{\beta -\alpha }(\mathbf{%
R}^{d})$ if $\beta -\alpha \notin \mathbf{N}$ and $|Bv|_{\beta -\alpha }\leq
C|v|_{\beta }.$ In this case, obviously, $Lv\in \tilde{C}^{\beta -\alpha }(%
\mathbf{R}^{d})$ as well.

If $\alpha >1,\beta =1+\alpha ,$ then%
\begin{eqnarray*}
Bv(x) &=&\int_{|\upsilon |\leq 1}\int_{0}^{1}[\nabla v(x+sG(x)\upsilon
)-\nabla v(x)]G(x)\upsilon ]dsd\pi \\
&&+\int_{|\upsilon |>1}\left[ v(x+G(x)\upsilon )-v(x)\right] d\pi .
\end{eqnarray*}%
Since%
\begin{eqnarray*}
\nabla (Bv(x)) &=&\int_{|\upsilon |\leq 1}\int_{0}^{1}[\partial
^{2}v(x+sG(x)\upsilon )-\partial ^{2}v(x)]G(x)\upsilon ]dsd\pi \\
&&+\int_{|\upsilon |\leq 1}\int_{0}^{1}\partial ^{2}v(x+sG(x)\upsilon
)\nabla G(x)\upsilon G(x)\upsilon dsd\pi \\
&&+\int_{|\upsilon |>1}\left[ \nabla v(x+G(x)\upsilon )-\nabla v(x)\right]
d\pi \\
&&+\int_{|\upsilon |>1}\nabla v(x+G(x)\upsilon )\nabla G(x)\upsilon d\pi ,
\end{eqnarray*}%
it follows that $\sup_{x}|\nabla (Bv(x))|\leq C|v|_{\beta }.$ Therefore $%
|Lv|_{\beta -\alpha }\leq C|v|_{\beta }$ as well$.$ If $\alpha =1$ and $%
\beta =2$, then%
\begin{eqnarray*}
|\nabla Bv(x)| &=&\int [\nabla v(x+G(x)\upsilon )-\nabla v(x)]d\pi +\int
\nabla v(x+G(x)\upsilon )G(x)\upsilon d\pi , \\
\sup_{x}|\nabla Bv(x)| &\leq &C|v|_{\beta }
\end{eqnarray*}%
and $|Lv|_{\beta -\alpha }\leq C|v|_{\beta }$. The case $\beta =4,\alpha =2$
is considered in a similar way.
\end{proof}

The main result of this section is the following statement.

\begin{theorem}
\label{thm:StoCP}Let $\alpha <\beta \leq \mu \leq 2\alpha $, and 
\begin{equation*}
\int_{|\upsilon |\leq 1}|\upsilon |^{\alpha }\pi (d\upsilon
)+\int_{|\upsilon |>1}|\upsilon |^{\mu }\pi (d\upsilon )<\infty .
\end{equation*}%
Assume $a^{i},b^{ij}\in \tilde{C}^{\beta }(\mathbf{R}^{d}),G^{ij}\in \tilde{C%
}^{\beta \vee 1}(\mathbf{R}^{d})$. Then for each $f\in \tilde{C}^{\beta }(%
\mathbf{R}^{d}),g\in \tilde{C}^{\beta }(\mathbf{R}^{d})$, there exists a
unique solution $u\in \tilde{C}^{\beta }(H)$ to \textup{(\ref{eq1})} and a
constant $C$ independent of $f,g$ such that $|u|_{\beta }\leq C(|f|_{\beta
}+|g|_{\beta }).$
\end{theorem}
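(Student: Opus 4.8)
The plan is to obtain the solution of the degenerate equation (\ref{eq1}) as a limit of solutions to nondegenerate regularized equations, for which existence, uniqueness and Schauder-type estimates in $\tilde{C}^{\beta}$ are available from the parabolic integro-differential theory (cf.\ \cite{mikz3}). Concretely, for $\varepsilon\in(0,1)$ replace the operator $L$ by $L^{\varepsilon}=L+\varepsilon\Delta$ (if $\alpha<2$) or, more in the spirit of keeping the driving noise of order $\alpha$, by adding a small nondegenerate $\alpha$-stable-type operator; in either case $L^{\varepsilon}$ generates a nice Markov process and the Cauchy problem
\begin{equation*}
(\partial_{t}+L^{\varepsilon})u^{\varepsilon}=f,\qquad u^{\varepsilon}(T,\cdot)=g
\end{equation*}
has a unique classical solution $u^{\varepsilon}\in\tilde{C}^{\beta}(H)$ with a bound $|u^{\varepsilon}|_{\beta}\le C(|f|_{\beta}+|g|_{\beta})$. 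The crucial point is that this constant $C$ must be \emph{independent of} $\varepsilon$; this is exactly where the probabilistic argument enters.

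For the $\varepsilon$-uniform estimate I would represent $u^{\varepsilon}$ via the Feynman--Kac formula $u^{\varepsilon}(t,x)=\mathbf{E}[g(X^{\varepsilon,t,x}_{T})+\int_{t}^{T}f(s,X^{\varepsilon,t,x}_{s})\,ds]$, where $X^{\varepsilon,t,x}$ solves the regularized SDE started at $x$ at time $t$. The sup-norm bound $|u^{\varepsilon}|_{0}\le C(|f|_{0}+|g|_{0})$ is immediate. For the $\beta$-Hölder seminorm one differentiates (for $[\beta]^{-}\ge1$) or takes differences in the initial condition: since the coefficients $a,b,G$ lie in $\tilde{C}^{\beta}$ with $\beta\le 2\alpha$ and $G\in\tilde{C}^{\beta\vee1}$, and the driving Lévy measure has the stated moments, one gets uniform (in $\varepsilon$) Lipschitz/Hölder control of the flow $x\mapsto X^{\varepsilon,t,x}_{s}$ and of its derivatives up to order $[\beta]^{-}$ in the appropriate $L^{p}$ sense, by standard moment estimates for SDEs driven by $\alpha$-order Lévy processes (Burkholder--Davis--Gundy for the jump part, Kunita-type inequalities). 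Here, unlike \cite{PrT97}, one does \emph{not} need derivatives of the stochastic flow of higher order than $[\beta]^{-}\le 2$; the fractional part $\{\beta\}^{+}$ is handled by interpolating the difference of two flows. Combining these bounds with the representation yields $|u^{\varepsilon}|_{\beta}\le C(|f|_{\beta}+|g|_{\beta})$ with $C$ independent of $\varepsilon$.

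Next I would pass to the limit $\varepsilon\downarrow0$. By the uniform $\tilde{C}^{\beta}$ bound and Arzel\`a--Ascoli (on compacts, for $u^{\varepsilon}$ and its spatial derivatives up to order $[\beta]^{-}$), some subsequence converges locally uniformly, together with derivatives, to a limit $u\in\tilde{C}^{\beta}(H)$ satisfying the same bound. To identify $u$ as a solution in the sense of Definition~\ref{def1}, rewrite the regularized equation in integrated form $u^{\varepsilon}(t,x)=g(x)+\int_{t}^{T}[L^{\varepsilon}u^{\varepsilon}(s,x)-f(s,x)]\,ds$; since $\varepsilon\Delta u^{\varepsilon}\to0$ locally uniformly (the second derivatives are uniformly bounded because $\beta\ge2$ when $\alpha\ge1$, and the case $\beta<2$ only occurs for $\alpha<1$ where no $b$-term is present and $L$ has no second-order part, so one uses the regularization by a subcritical stable operator instead and the corresponding term vanishes) and $Lu^{\varepsilon}\to Lu$ locally uniformly by Lemma~\ref{le3} applied to the differences $u^{\varepsilon}-u$, one obtains (\ref{defs}) in the limit. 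Finally, uniqueness: if $u_{1},u_{2}$ are two solutions in $\tilde{C}^{\beta}(H)$, then $w=u_{1}-u_{2}\in\tilde{C}^{\beta}(H)$ satisfies $w(t,x)=\int_{t}^{T}Lw(s,x)\,ds$; since $Lw\in\tilde{C}^{\beta-\alpha}(H)$ with $|Lw(s,\cdot)|_{0}\le C|w(s,\cdot)|_{\beta}$ by Lemma~\ref{le3}, and since $w\in\tilde{C}^{\beta}$ with $\beta>\alpha$ lets one bootstrap, a Gronwall argument in the sup-norm (after first controlling $|w(s,\cdot)|_{\beta}$ uniformly, then iterating the integral identity) forces $w\equiv0$. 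Alternatively, uniqueness follows directly from the probabilistic representation: any $\tilde{C}^{\beta}(H)$-solution must, by It\^{o}'s formula applied along $X_{s}$ (justified by Remark~\ref{lrenew1} since $\beta>\alpha$), equal $\mathbf{E}[g(X_{T})+\int_{t}^{T}f(s,X_{s})\,ds\mid X_{t}=x]$.

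The main obstacle is the $\varepsilon$-uniform Schauder estimate $|u^{\varepsilon}|_{\beta}\le C(|f|_{\beta}+|g|_{\beta})$: the regularizing operator has symbol of a different (higher) order than the genuinely $\alpha$-order part, so analytic Schauder constants degenerate as $\varepsilon\to0$, and one genuinely needs the probabilistic flow estimates to see that the relevant constants stay bounded. Getting the flow-derivative bounds to match exactly the exponent $\{\beta\}^{+}$ in the Hölder seminorm — i.e.\ proving $\mathbf{E}|D^{[\beta]^{-}}_{x}X^{\varepsilon,t,x}_{s}-D^{[\beta]^{-}}_{x}X^{\varepsilon,t,\tilde x}_{s}|\le C|x-\tilde x|^{\{\beta\}^{+}}$ uniformly in $\varepsilon$ and $s$, using only the $\tilde{C}^{\beta}$-regularity of the coefficients and the $\mu$-moment bound on $\pi$ with $\mu\le2\alpha$ — is the delicate technical heart; everything else is soft limiting and Gronwall.
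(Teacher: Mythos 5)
Your overall framework --- regularize, prove an $\varepsilon$-uniform $\tilde{C}^{\beta}$ estimate probabilistically, pass to the limit, and prove uniqueness via It\^{o}'s formula along $X^{s,x}$ --- matches the paper. But the central technical step is attacked differently, and your version has genuine gaps.

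First, you propose to differentiate the Feynman--Kac representation and control the Hölder continuity of $D^{[\beta]^{-}}_{x}X^{\varepsilon,t,x}_{s}$. The paper explicitly does \emph{not} use stochastic flow derivatives. Instead, it mollifies the terminal condition to $g_{\varepsilon}$ so that Lemma \ref{lem1} produces a solution $u_{\varepsilon}\in\tilde{C}^{\bar{\beta}}(H)$ with $\bar{\beta}>2\alpha$, hence genuinely smooth. It then differentiates the \emph{equation}, writes the multilinear form $u_{\varepsilon}^{(k)}(t,x;\xi^{1},\ldots,\xi^{k})$ (equation (\ref{6})), and applies It\^{o}'s formula to $u_{\varepsilon}^{(k)}(t,U_{t},V_{t}^{1},\ldots,V_{t}^{k})$ along the explicit linear SDEs in (\ref{4}). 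That step needs only It\^{o}'s formula for a known smooth function, not Kunita-type differentiability of the flow with $\tilde{C}^{\beta\vee1}$ coefficients, which for non-integer $\beta$ is exactly the delicate part you yourself flag as the "technical heart" without supplying it. You also omit the mollification of $g$: with $g$ only in $\tilde{C}^{\beta}$, $\beta\le 2\alpha$, the regularized solution is at best $\tilde{C}^{\beta}$, and one cannot take $[\beta]^{-}$ classical derivatives of it to run either your chain-rule or the paper's It\^{o} computation.

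Second, and independently of which route is taken: the auxiliary processes $H,V,\bar{V}$ must have all moments bounded uniformly (Lemma \ref{lem2} uses $L^{l}$ for arbitrary $l\ge2$), yet the L\'{e}vy measure of $Z$ is only assumed to have a $\mu$-th moment with $\mu\le 2\alpha$. Your flow-derivative processes would be driven by the full $Z$ and so would fail to have enough moments. The paper's fix is the "bounded-jump" decomposition in Remark \ref{re3}: rewrite the equation with $\bar{L}^{\varepsilon}$ (jumps $\le 1$ only) on the left and shift the $|\upsilon|>1$ contribution into a zeroth-order term $F$ on the right, so that $U,H,V,\bar{V}$ are driven by the truncated $\tilde{Z}$ and have all moments. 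You do not address this, and without it the key moment inequalities fail.

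Minor points: your claim that $\beta<2$ occurs only when $\alpha<1$ is false ($\alpha\in(1,2)$, $\beta\in(\alpha,2)$ is admissible); the fix is harmless because the paper's regularizer is $\varepsilon^{\alpha}(-\Delta)^{\alpha/2}$ (same order $\alpha$, hence $L^{\varepsilon}u_{\varepsilon}\to Lu$ pointwise from the uniform $\tilde{C}^{\beta}$ bound with $\beta>\alpha$), and you should commit to that choice rather than $\varepsilon\Delta$, which does not vanish when $[\beta]^{-}<2$. Your uniqueness argument via the probabilistic representation is correct and is the one the paper uses.
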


To prove Theorem \ref{thm:StoCP}, for $\varepsilon \in (0,1)$ we consider a
non-degenerate equation%
\begin{eqnarray}
\big(\partial _{t}+L^{\varepsilon }\big)u(t,x) &=&f(t,x),  \label{eq2} \\
u(T,x) &=&g_{\varepsilon }(x),  \notag
\end{eqnarray}%
where $L^{\varepsilon }u=-\varepsilon ^{\alpha }\left( -\Delta \right)
^{\alpha /2}u+Lu$ and 
\begin{equation*}
g_{\varepsilon }(x)=\int g(y)w^{\varepsilon }(x-y)dy=\int
g(x-y)w^{\varepsilon }(y)dy,x\in \mathbf{R}^{d}
\end{equation*}%
with $w^{\varepsilon }(x)=\varepsilon ^{-d}w\left( \frac{x}{\varepsilon }%
\right) ,x\in \mathbf{R}^{d},$ $w\in C_{0}^{\infty }(\mathbf{R}^{d}),\int
wdx=1.$

An obvious consequence of Corollary 9 in \cite{mikz3} is the following
statement.

\begin{lemma}
\label{lem1}(see Corollary 9 in \cite{mikz3}) Let $\alpha <\beta \leq \mu
\leq 2\alpha $,%
\begin{equation*}
\int_{|\upsilon |\leq 1}|\upsilon |^{\alpha }\pi (d\upsilon
)+\int_{|\upsilon |>1}|\upsilon |^{\mu }\pi (d\upsilon )<\infty ,
\end{equation*}%
and $a^{i},b^{ij},g,f\in \tilde{C}^{\beta }(\mathbf{R}^{d}),G^{ij}\in \tilde{%
C}^{\beta \vee 1}(\mathbf{R}^{d})$. Then for each $\varepsilon \in (0,1)$
there is $\bar{\beta}>2\alpha $ and a unique $u=u_{\varepsilon }\in \tilde{C}%
^{\bar{\beta}}(H)$ solving (\ref{eq2}).
\end{lemma}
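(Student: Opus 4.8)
\textbf{Proof plan for Lemma \ref{lem1}.}

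The plan is to reduce equation (\ref{eq2}) at fixed $\varepsilon$ to an abstract parabolic equation whose operator is the generator of a nondegenerate L\'{e}vy-type process, and then cite Corollary 9 in \cite{mikz3} verbatim. First I would observe that the operator $L^{\varepsilon }=L-\varepsilon ^{\alpha }(-\Delta )^{\alpha /2}$ is itself the generator of an SDE of the form (\ref{one}), obtained by adjoining to the driving noise $Z$ an independent $\alpha $-stable process $Z^{\prime }$ scaled by $\varepsilon $, i.e. $L^{\varepsilon }$ is the operator associated with $X_{t}^{\varepsilon }=X_{0}+\int_{0}^{t}a(X_{s}^{\varepsilon })ds+\int_{0}^{t}b(X_{s}^{\varepsilon })dW_{s}+\int_{0}^{t}G(X_{s-}^{\varepsilon })dZ_{s}+\varepsilon \,Z_{t}^{\prime }$ (with the stable part interpreted through its L\'{e}vy symbol). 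The key point is that the L\'{e}vy measure of this enlarged driving process is $\pi (d\upsilon )+\varepsilon ^{-\text{(stable exponent)}}\pi _{\alpha }(d\upsilon )$ where $\pi _{\alpha }$ is the $\alpha $-stable L\'{e}vy measure, so the small-jump behaviour is now genuinely $\alpha $-stable-like and the operator is \emph{nondegenerate} of order $\alpha $ in the sense required by the regularity theory of \cite{mikz3}.

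Second, I would verify that the coefficient hypotheses of Corollary 9 in \cite{mikz3} are met: $a^{i},b^{ij},f,g\in \tilde{C}^{\beta }(\mathbf{R}^{d})$ and $G^{ij}\in \tilde{C}^{\beta \vee 1}(\mathbf{R}^{d})$ are assumed here, and the identity diffusion/jump part contributed by the added $\varepsilon ^{\alpha }(-\Delta )^{\alpha /2}$ term has constant (hence smooth) coefficients, so the combined coefficient regularity is still $\tilde{C}^{\beta }$ (resp. $\tilde{C}^{\beta \vee 1}$). The moment condition $\int_{|\upsilon |\leq 1}|\upsilon |^{\alpha }d\pi +\int_{|\upsilon |>1}|\upsilon |^{\mu }d\pi <\infty $ is inherited, and the stable part added near zero only improves (does not spoil) the small-jump integrability $\int_{|\upsilon |\leq 1}|\upsilon |^{\alpha }d\pi <\infty $, while adding nothing to the large-jump tail. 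Hence all hypotheses of the cited corollary hold for the operator $\partial _{t}+L^{\varepsilon }$, and it delivers some $\bar{\beta}>2\alpha $ together with a unique $u_{\varepsilon }\in \tilde{C}^{\bar{\beta}}(H)$ solving (\ref{eq2}). The gain in regularity from $\tilde{C}^{\beta }$ data to a $\tilde{C}^{\bar{\beta}}$ solution is precisely the nondegenerate Schauder-type smoothing that the $\varepsilon $-term provides; this is the content of Corollary 9 in \cite{mikz3} and is exactly why the regularization is introduced.

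The only genuine point to check carefully, and the main (minor) obstacle, is that the terminal datum in (\ref{eq2}) is the mollification $g_{\varepsilon }$ rather than $g$ itself; but $g\in \tilde{C}^{\beta }(\mathbf{R}^{d})$ implies $g_{\varepsilon }\in C_{b}^{\infty }(\mathbf{R}^{d})\subset \tilde{C}^{\beta }(\mathbf{R}^{d})$ with $|g_{\varepsilon }|_{\beta }\leq |g|_{\beta }$, so replacing $g$ by $g_{\varepsilon }$ is harmless for applying the corollary. (The mollification matters later, when one sends $\varepsilon \to 0$ and needs $\varepsilon $-uniform bounds, but not for the existence statement at fixed $\varepsilon $.) With these observations in place the lemma is immediate from Corollary 9 in \cite{mikz3}, which is exactly how the statement is flagged in the text.
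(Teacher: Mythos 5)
Your proposal takes the same route as the paper, which offers no separate argument for Lemma \ref{lem1} and simply states it as an immediate consequence of Corollary 9 in \cite{mikz3}, exactly as you do by viewing $\partial _{t}+L^{\varepsilon }$ as the Kolmogorov operator of the nondegenerate system obtained by adjoining the independent noise $\varepsilon Z^{\alpha }$ (the process the paper itself uses later, in (\ref{4})). Two small corrections to your commentary: the mollification of $g$ is not merely ``harmless'' but is what makes the fixed-$\varepsilon $ conclusion possible, since $u_{\varepsilon }(T,\cdot )=g_{\varepsilon }$ must itself belong to $\tilde{C}^{\bar{\beta}}(\mathbf{R}^{d})$ with $\bar{\beta}>2\alpha \geq \beta $, so the unmollified $g\in \tilde{C}^{\beta }$ would in general not yield a $\tilde{C}^{\bar{\beta}}(H)$ solution up to $t=T$; and the L\'{e}vy measure of the added term $\varepsilon Z^{\alpha }$ is the dilation image $\varepsilon ^{\alpha }\pi _{\alpha }$, not $\varepsilon $ to a negative power times $\pi _{\alpha }$ (a slip that does not affect the nondegeneracy argument).
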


We separate in the operator $L^{\varepsilon }$ its "bounded jump" part $\bar{%
L}^{\varepsilon }v(x)=\bar{L}_{z}^{\varepsilon }v(x)|_{z=x}$ with%
\begin{eqnarray*}
\bar{L}_{z}^{\varepsilon }v(x) &=&-\varepsilon ^{\alpha }\left( -\Delta
\right) ^{\alpha /2}u+(a(z),\nabla _{x}v(x)) \\
&&+\frac{1}{2}\sum_{i,j=1}^{d}(b^{i}(z),b^{j}(z))\partial _{ij}^{2}v(x) \\
&&+\int_{|\upsilon |\leq 1}\left[ v(x+G\left( z\right) \upsilon )-v(x)-\chi
_{\alpha }(\upsilon )(\nabla v(x),G(z)\upsilon )\right] d\pi ,
\end{eqnarray*}%
$z,x\in R^{d},v\in C_{0}^{\infty }(\mathbf{R}^{d})$, so that%
\begin{equation*}
L_{z}^{\varepsilon }v(x)=\bar{L}_{z}^{\varepsilon }v(x)+\int_{|\upsilon
|>1}[v(x+G(z)y)-v(x)]d\pi ,x,z\in \mathbf{R}^{d}.
\end{equation*}

\begin{remark}
\label{re3}If the assumptions of Lemma \ref{lem1} hold and $u_{\varepsilon
}\in \tilde{C}^{\bar{\beta}}(H)$ solves (\ref{eq2}) with $\bar{\beta}%
>2\alpha $, then $u_{\varepsilon }$ satisfies the following equation as well:%
\begin{eqnarray}
\big(\partial _{t}+\bar{L}^{\varepsilon }\big)u(t,x) &=&F(u,t,x),
\label{eq3} \\
u(T,x) &=&g_{\varepsilon }(x),  \notag
\end{eqnarray}%
where $F(u,t,x)=F_{z}(u,t,x)|_{z=x}$ with 
\begin{equation*}
F_{z}(u,t,x)=f(t,x)-\int_{|\upsilon |>1}\big[u(t,x+G(z)\upsilon )-u(t,x)\big]%
d\pi .
\end{equation*}
\end{remark}

Using a probabilistic form of a maximum principle we will derive uniform
(independent of $\varepsilon $) $\tilde{C}^{\beta }$-norm estimates of $%
u_{\varepsilon }$ and passing to the limit as $\varepsilon \rightarrow 0$ we
will obtain $u\in \tilde{C}^{\beta }(H)$ solving (\ref{eq1}). First we prove
some auxiliary statements.

Let 
\begin{eqnarray}
\tilde{Z}_{t} &=&\int_{0}^{t}\int_{|\upsilon |\leq 1}[(1-\chi _{\alpha
}(\upsilon ))\upsilon p(dt,d\upsilon )+\chi _{\alpha }(\upsilon )\upsilon
q(dt,d\upsilon )]  \notag \\
&=&\int_{0}^{t}\int_{|v|\leq 1}vq(dt,d\upsilon )+t\int_{|\upsilon |\leq
1}(1-\chi _{\alpha }(\upsilon ))\upsilon d\pi .  \label{3}
\end{eqnarray}%
For $(s,x)\in H,h\in \mathbf{R}^{d},\xi \in \mathbf{R}^{d}$, the following
stochastic processes in $[s,T]$ are used to derive the uniform estimates:%
\begin{eqnarray}
dU_{t} &=&\varepsilon dZ_{t}^{\alpha }+a(U_{t})dt+b(U_{t})dW_{t}+G(U_{t-})d%
\tilde{Z}_{t},  \notag \\
&&  \notag \\
dH_{t} &=&[a(U_{t}+H_{t})-a(U_{t})]dt+[b(U_{t}+H_{t})-b(U_{t})]dW_{t}  \notag
\\
&&+\left[ G(U_{t-}+H_{t-})-G(U_{t-})\right] d\tilde{Z}_{t},  \label{4} \\
&&  \notag \\
d\bar{V}_{t} &=&a^{(1)}(U_{t}+H_{t};\bar{V}_{t})dt+b^{(1)}(U_{t}+H_{t};\bar{V%
}_{t})dW_{t}  \notag \\
&&+\int_{|\upsilon |\leq 1}G^{(1)}(U_{t-}+H_{t-};\bar{V}_{t-})d\tilde{Z}_{t},
\notag \\
&&  \notag \\
dV_{t}
&=&a^{(1)}(U_{t};V_{t})dt+b^{(1)}(U_{t};V_{t})dW_{t}+G^{(1)}(U_{t-};V_{t-})d%
\tilde{Z}_{t},  \notag \\
U_{s} &=&x,H_{s}=h,V_{s}=\xi ,\bar{V}_{s}=\xi \text{,}  \notag
\end{eqnarray}%
where $Z^{\alpha }$ is $\mathbf{R}^{d}$- valued spherically symmetric $%
\alpha $-stable process corresponding to $(-\Delta )^{\alpha /2}$ and
independent of $Z$. Recall for a function $v$ on $\mathbf{R}^{d}$ we denote $%
v^{(1)}(x;\xi )=(\nabla v(x),\xi ),x,\xi \in \mathbf{R}^{d}$ and, for
example, componentwise,%
\begin{equation*}
dV_{t}^{j}=(\nabla a^{j}(U_{t}),V_{t})dt+\sum_{i=1}^{n}(\nabla
b^{ji}(U_{t}),V_{t})dW_{t}^{i}+\sum_{i=1}^{m}(\nabla G^{ji}(U_{t-}),V_{t-})d%
\tilde{Z}_{t}^{i},
\end{equation*}%
$j=1,\ldots ,d.$

\begin{lemma}
\label{lem2}(a) If $a^{i},b_{j}^{i},G^{ij}\in \tilde{C}^{1}(\mathbf{R}^{d}),$
then for each \thinspace $l\geq 2$ there is a constant $C$ such that%
\begin{equation*}
\mathbf{E[}\sup_{s\leq t\leq T}|H_{t}|^{l}]\leq C|h|^{l}.
\end{equation*}

(b) If $a^{i},b_{j}^{i},G^{ij}\in \tilde{C}^{1+\kappa }(\mathbf{R}^{d})$
with $\kappa \in (0,1]$, then for each \thinspace $l\geq 2$ there is a
constant $C$ such that%
\begin{eqnarray*}
\mathbf{E[}\sup_{s\leq t\leq T}|V_{s}|^{l}+\sup_{s\leq t\leq T}|\bar{V}%
_{s}|^{l}] &\leq &C|\xi |^{l}, \\
\mathbf{E}[\sup_{s\leq t\leq T}|V_{t}-\bar{V}_{t}|^{l}] &\leq &C|\xi
|^{l}|h|^{l\kappa }.
\end{eqnarray*}
\end{lemma}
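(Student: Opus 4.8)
The estimates in parts (a) and (b) are all of the same flavour: each of $H_t$, $V_t$, $\bar V_t$ and $V_t-\bar V_t$ satisfies a linear SDE driven by $W$, by the compensated measure $q$ over $\{|\upsilon|\le 1\}$, and by a bounded-variation drift coming from the non-compensated part of $\tilde Z$ in \eqref{3}. The common tool will be: apply It\^o's formula to $|\cdot|^l$ (or to $(\varepsilon_0+|\cdot|^2)^{l/2}$ to avoid singularity at the origin, then let $\varepsilon_0\downarrow0$), take expectations to kill the martingale parts, use the Lipschitz/boundedness hypotheses to bound the resulting drift by $C$ times the current value, and close with Gronwall after a Burkholder--Davis--Gundy step for the running-supremum. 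I would carry this out once, carefully, for $H_t$, and then indicate that the other three are formally identical, pointing out the only genuine differences (the inhomogeneous term for $V_t-\bar V_t$, and the $\kappa$-H\"older increment of $\nabla a,\nabla b,\nabla G$ that produces the $|h|^{l\kappa}$ factor).

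\textbf{Part (a).} Since $a^i,b^{ij},G^{ij}\in\tilde C^1$, the maps $x\mapsto a(U_t+x)-a(U_t)$ etc.\ are globally Lipschitz in $x$ uniformly in $t$, with Lipschitz constant $\le C=C(|a|_1,|b|_1,|G|_1)$, and vanish at $x=0$; in particular $|a(U_t+H_t)-a(U_t)|\le C|H_t|$ and likewise for $b$ and $G$. First I would establish $\sup_{s\le t\le T}\mathbf E|H_t|^l\le C|h|^l$: write the jump term as $\int_{|\upsilon|\le1}[G(U_{t-}+H_{t-})-G(U_{t-})]\upsilon\,q(dt,d\upsilon)$ plus the drift $t\int_{|\upsilon|\le1}(1-\chi_\alpha(\upsilon))[\cdots]\,d\pi$; all three stochastic integrands are bounded by $C|H_t|$ (using $\int_{|\upsilon|\le1}|\upsilon|^2\,d\pi<\infty$ when $\alpha\le2$, which holds since $\int_{|\upsilon|\le1}|\upsilon|^\alpha\,d\pi<\infty$ and $|\upsilon|\le1$). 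It\^o's formula for $|H_t|^l$ (with the regularisation at $0$), the L\'evy--It\^o isometry for the $q$-integral, and $|a+b|^l\le 2^{l-1}(|a|^l+|b|^l)$-type estimates give
\begin{equation*}
\mathbf E|H_t|^l\le |h|^l+C\int_s^t\mathbf E|H_r|^l\,dr,
\end{equation*}
so $\mathbf E|H_t|^l\le |h|^le^{C(T-s)}$ by Gronwall. For the running supremum, apply the same It\^o expansion, move the martingale parts to one side, apply BDG to each (the $W$-part and the compensated $q$-part), dominate the resulting quadratic-variation terms by $\int_s^T\sup_{r\le u}|H_r|^l\,du$ via Young's inequality, absorb the small multiple of $\mathbf E\sup|H|^l$ on the left, and invoke Gronwall once more, using the bound on $\mathbf E|H_t|^l$ just obtained. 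This yields $\mathbf E[\sup_{s\le t\le T}|H_t|^l]\le C|h|^l$.

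\textbf{Part (b).} The SDEs for $V_t$ and $\bar V_t$ are linear in the unknown with coefficients $\nabla a^j(U_t),\nabla b^{ji}(U_t),\nabla G^{ji}(U_t)$ (resp.\ evaluated at $U_t+H_t$) that are bounded by $|a|_{1+\kappa},|b|_{1+\kappa},|G|_{1+\kappa}$; so the identical It\^o/BDG/Gronwall scheme gives $\mathbf E[\sup|V_t|^l]\le C|\xi|^l$ and $\mathbf E[\sup|\bar V_t|^l]\le C|\xi|^l$ (here part (a) is invoked only to know $H_t$ is well-defined; no moment bound on $H$ is needed for these two). For the difference $D_t:=V_t-\bar V_t$, subtract the two SDEs: $D_s=0$ and $D_t$ solves a linear SDE with the same coefficients as $V$ plus an inhomogeneous term of the form $[\nabla a^j(U_t)-\nabla a^j(U_t+H_t)]\cdot\bar V_t$ (and analogues for $b,G$). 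By the $\kappa$-H\"older continuity of $\nabla a,\nabla b,\nabla G$ this inhomogeneity is bounded in norm by $C|H_t|^\kappa|\bar V_t|$. Running the same It\^o estimate on $|D_t|^l$ produces
\begin{equation*}
\mathbf E|D_t|^l\le C\int_s^t\mathbf E|D_r|^l\,dr+C\int_s^t\mathbf E\big[|H_r|^{l\kappa}|\bar V_r|^l\big]\,dr,
\end{equation*}
and the last integrand is handled by the Cauchy--Schwarz (or H\"older) inequality together with the moment bounds $\mathbf E[\sup|H|^{2l\kappa}]\le C|h|^{2l\kappa}$ (part (a), applicable since $a^i,b^{ij},G^{ij}\in\tilde C^{1+\kappa}\subset\tilde C^1$) and $\mathbf E[\sup|\bar V|^{2l}]\le C|\xi|^{2l}$, giving the bound $C|\xi|^l|h|^{l\kappa}$ on the inhomogeneous term. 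Gronwall then yields $\mathbf E|D_t|^l\le C|\xi|^l|h|^{l\kappa}$, and the supremum version follows from the usual BDG-absorption step.

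\textbf{Main obstacle.} Nothing here is deep; the one point requiring care is the jump term. Because the compensated integral is only over $\{|\upsilon|\le1\}$, one must be sure the It\^o formula for $|x|^l$ applied to a jump process is legitimate (smoothing $|x|^l$ near $0$ and passing to the limit, or using that $l\ge2$ so $x\mapsto|x|^l$ is $C^2$ only away from $0$ but the drift dominates), and that the jump compensator contributes a term controlled by $\int_{|\upsilon|\le1}|\upsilon|^2\,d\pi\,|H_t|^l$ rather than by a higher power of $|H_t|$ (it does, because $G$ is Lipschitz so the jump size of $H$ is $O(|H_{t-}||\upsilon|)$). With that dealt with, the three Gronwall arguments are routine.
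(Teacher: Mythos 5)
Your proof is correct and proceeds by the same overall strategy as the paper: linear SDEs, moment estimates on the stochastic integrals, and Gronwall, closing the argument for the difference $V-\bar V$ by Cauchy--Schwarz together with the bound on $\mathbf{E}|H_r|^{2\kappa l}$ from part (a). The one point of mechanical divergence is in how the moment bound is obtained: the paper does not pass through It\^o's formula for $|x|^l$ at all. Instead it applies the $L^l$ martingale moment inequality (BDG for the Wiener part, and the two-term Kunita/Bichteler--Jacod estimate $\mathbf E\sup_t|\int\!\!\int f\,dq|^l\le C\big[\mathbf E(\int\!\!\int|f|^2\,d\pi\,dr)^{l/2}+\mathbf E\int\!\!\int|f|^l\,d\pi\,dr\big]$ for the compensated jump part, citing \cite{mikprag}, \cite{PrT97}) directly to the SDE for $H_t$, which in one stroke yields
\begin{equation*}
\mathbf E\sup_{s\le r\le t}|H_r|^l\le C\Big[|h|^l+\mathbf E\Big(\int_s^t|H_r|^2\,dr\Big)^{l/2}+\mathbf E\int_s^t|H_r|^l\,dr\Big]
\end{equation*}
and hence the running-supremum Gronwall inequality with no detour. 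Your route via It\^o on $|x|^l$, followed by a separate BDG step to upgrade $\sup_t\mathbf E|H_t|^l$ to $\mathbf E\sup_t|H_t|^l$, reaches the same place but imports two extra technicalities that the paper's route sidesteps: the regularity of $x\mapsto|x|^l$ at the origin, and the non-Taylor control of the jump increment $|H_{t-}+\Delta|^l-|H_{t-}|^l$ when $|\upsilon|$ is of order one. You identify both issues correctly, and they can indeed be handled (the jump increment is $O(|H_{t-}|^l)$ since $|\Delta|\le C|H_{t-}||\upsilon|$ with $|\upsilon|\le1$, and the $l$-th moment of the compensator is then controlled by $\int_{|\upsilon|\le1}|\upsilon|^\alpha\,d\pi<\infty$), so your proof is sound; it is simply a more laborious packaging of the same estimate.
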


\begin{proof}
(a) Since (\ref{3}) holds, we have by H\"{o}lder inequality and martingale
moment estimates (see \cite{mikprag}, \cite{PrT97})%
\begin{eqnarray*}
\mathbf{E}\sup_{s\leq r\leq t}|H_{r}|^{l} &\leq &C[|h|^{l}+\mathbf{E[}\left(
\int_{s}^{t}|H_{r}|^{2}dr\right) ^{l/2}]+\mathbf{E[}%
\int_{s}^{t}|H_{r}|^{l}dr] \\
&\leq &C[|h|^{l}+\mathbf{E[}\int_{s}^{t}\sup_{s\leq r^{\prime }\leq
r}|H_{r^{\prime }}|^{l}dr],s\leq t\leq T.
\end{eqnarray*}%
and inequality follows by Gronwall lemma.

(b) Similarly, for each $l\geq 2$, there is a constant $C$ so that%
\begin{equation*}
\mathbf{E[}\sup_{s\leq t\leq T}|V_{s}|^{l}+\sup_{s\leq t\leq T}|\bar{V}%
_{s}|^{l}]\leq C|\xi |^{l}.
\end{equation*}%
Then%
\begin{eqnarray*}
\mathbf{E}\sup_{s\leq r\leq t}|V_{r}-\bar{V}_{r}|^{l} &\leq &C[\mathbf{E[}%
\left( \int_{s}^{t}|H_{r}|^{2\kappa }|\bar{V}_{r}|^{2}dr\right) ^{l/2}]+%
\mathbf{E[}\int_{s}^{t}|H_{r}|^{\kappa l}|\bar{V}_{r}|^{l}dr] \\
&&+\mathbf{E[}\left( \int_{s}^{t}|\bar{V}_{r}-V_{r}|^{2}dr\right) ^{l/2}]+%
\mathbf{E[}\int_{s}^{t}|\bar{V}_{r}-V_{r}|^{l}dr] \\
&\leq &C[\mathbf{E}\int_{s}^{t}|H_{r}|^{\kappa l}|\bar{V}_{r}|^{l}dr+\mathbf{%
E}\int_{s}^{t}|\bar{V}_{r}-V_{r}|^{l}dr],s\leq t\leq T.
\end{eqnarray*}%
By Gronwall lemma,%
\begin{eqnarray*}
\mathbf{E}\sup_{s\leq r\leq T}|V_{r}-\bar{V}_{r}|^{l} &\leq &C\mathbf{E}%
\int_{s}^{T}|H_{r}|^{\kappa l}|\bar{V}_{r}|^{l}dr \\
&\leq &C\int_{s}^{T}[\mathbf{E(}|H_{r}|^{2\kappa l})]^{1/2}[\mathbf{E}(|\bar{%
V}_{r}|^{2l})^{1/2}]dr \\
&\leq &C|\xi |^{l}|h|^{\kappa l}.
\end{eqnarray*}
\end{proof}

\subsection{Proof of Theorem \protect\ref{thm:StoCP}}

\emph{1. Existence.} By Lemma \ref{lem1}, for each $\varepsilon \in (0,1)$
there is a unique solution $u_{\varepsilon }\in \tilde{C}^{\bar{\beta}}(H)$
to (\ref{eq2}) for some $\bar{\beta}>2\alpha $. By Remark \ref{re3}, (\ref%
{eq3}) holds as well. Let $(s,x)\in H$ and $U_{t}$ solves (\ref{4})$.$ By It%
\^{o} formula,%
\begin{equation*}
\mathbf{E}g_{\varepsilon }(U_{T})-u_{\varepsilon }(s,x)=\mathbf{E}%
\int_{s}^{T}F(u_{\varepsilon },r,U_{r})dr
\end{equation*}%
and%
\begin{equation*}
|u_{\varepsilon }(s,\cdot )|_{0}\leq |g|_{0}+\int_{s}^{T}(|f(r,\cdot
)|_{0}+C|u_{\varepsilon }(r,\cdot )|_{0}dr.
\end{equation*}%
By Gronwall lemma, there is a constant not depending on $u_{\varepsilon }$
and $\varepsilon $ such that 
\begin{equation*}
\sup_{0\leq t\leq T}|u_{\varepsilon }(t,\cdot )|_{0}\leq
C[|g|_{0}+\int_{0}^{T}|f(r,\cdot )|_{0}dr].
\end{equation*}

As suggested in \cite{kry}, we estimate multilinear forms associated to the
derivatives of $u.$ Let $k=[\beta ]^{-},(t,x)\in H,\xi ^{1},\ldots ,\xi
^{k}\in \mathbf{R}^{d}$ and%
\begin{eqnarray*}
u_{\varepsilon }^{(k)}(t,x;\xi ^{1},\ldots ,\xi ^{k}) &=&\sum_{i_{1},\ldots
,i_{k}=1}^{d}\frac{\partial ^{k}u(t,x)}{\partial x_{i_{k}}\ldots x_{i_{1}}}%
\xi _{i_{1}}^{1}\ldots \xi _{i_{k}}^{k}\text{ if }k\geq 1, \\
u_{\varepsilon }^{(0)}(t,x) &=&u_{\varepsilon }(t,x)\text{.}
\end{eqnarray*}%
For $z\in \mathbf{R}^{d},(t,x)\in H,\xi ^{1}\in \mathbf{R}^{d},\ldots ,\xi
^{k}\in \mathbf{R}^{d},$ let%
\begin{eqnarray*}
&&\mathcal{P}_{z}u_{\varepsilon }^{(k)}(t,x;\xi ^{1},\ldots ,\xi
^{k})=-\varepsilon ^{\alpha }(-\Delta _{x})^{\alpha /2}u_{\varepsilon
}^{(k)}(t,x,\xi ^{1},\ldots ,\xi ^{k}) \\
&&+\int_{|\upsilon |\leq 1}\{u_{\varepsilon }^{(k)}(x+G(z)\upsilon ;\xi
^{1}+G^{(1)}(z;\xi ^{1})\upsilon ,\ldots ,\xi ^{k}+G_{\varepsilon
}^{(1)}(z;\xi ^{k})\upsilon )-u_{\varepsilon }^{(k)}(x;\xi ^{1},\ldots ,\xi
^{k}) \\
&&-\chi _{\alpha }(\upsilon )[(\nabla _{x}u_{\varepsilon }^{(k)}(x;\xi
_{1},\ldots ,\xi ^{k}),G(z)\upsilon )-\sum_{l=1}^{k}(\nabla _{\xi
^{l}}u_{\varepsilon }^{(k)}(x;\xi _{1},\ldots ,\xi ^{k}),G^{(1)}(z;\xi
^{l})\upsilon )]\}d\pi \\
&&+(a(z),\nabla _{x}u_{\varepsilon }^{(k)}(x;\xi ^{1},\ldots ,\xi
^{k}))+\sum_{l=1}^{k}(\nabla _{\xi ^{l}}u^{(k)}(x;\xi ^{1},\ldots ,\xi
^{k}),a^{(1)}(z;\xi ^{l})) \\
&&+\frac{1}{2}\sum_{i,j}\{(b^{i}(z),b^{j}(z))\partial
_{ij}^{2}u_{\varepsilon }^{(k)}(x;\xi ^{1},\ldots ,\xi ^{k}) \\
&&+\sum_{l=1}^{k}[(b^{i,(1)}(z;\xi ^{l}),b^{j}(z))\partial _{\xi
_{i}^{l}x_{j}}u_{\varepsilon }^{(k)}(x;,\xi ^{1},\ldots ,\xi ^{k}) \\
&&+(b^{i}(z),b^{j,(1)}(z;\xi ^{l}))\partial _{x_{i}\xi
_{j}^{l}}u^{(k)}(x;,\xi ^{1},\ldots ,\xi ^{k})]\}.
\end{eqnarray*}

Differentiating both sides of (\ref{eq3}) and multiplying by $\xi
_{i_{1}}^{1}\ldots \xi _{i_{k}}^{k}$ we see that $u_{\varepsilon
}^{(k)}(t,x;\xi ^{1},\ldots ,\xi ^{k})$ satisfies the equation 
\begin{eqnarray}
&&\partial _{t}u_{\varepsilon }^{(k)}(t,x,\xi ^{1},\ldots ,\xi ^{k})+%
\mathcal{P}^{\varepsilon }u_{\varepsilon }^{(k)}(t,x,\xi ^{1},\ldots ,\xi
^{k})  \label{6} \\
&=&A(u_{\varepsilon },t,x,\xi ^{1},\ldots ,\xi ^{k}),  \notag
\end{eqnarray}%
where 
\begin{equation*}
A(u_{\varepsilon },t,x,\xi ^{1},\ldots ,\xi ^{k})=B(u_{\varepsilon },t,x,\xi
^{1},\ldots ,\xi ^{k})+F^{(k)}(u_{\varepsilon },t,x;\xi ^{1},\ldots \xi ^{k})
\end{equation*}%
and $B(u_{\varepsilon },t,x,\xi ^{1},\ldots ,\xi ^{k})$ is a finite sum of
the terms of the form%
\begin{eqnarray*}
&&[\nabla _{x}u_{\varepsilon }^{(l)}(t,x+G(x)\upsilon ;\xi ^{i_{1}},\ldots
,\xi ^{i_{l}})-\nabla _{x}u^{(l)}(t,x+G(x)\upsilon ;\xi ^{i_{1}},\ldots ,\xi
^{i_{l}})]\times  \\
&&\times G^{(k-l)}(x;\xi ^{i_{l+1}},\ldots ,\xi ^{i_{k}})\upsilon  \\
&=&\int_{0}^{1}\partial ^{2}u_{x}^{(l)}(t,x+sG(x)\upsilon ;\xi
^{i_{1}},\ldots ,\xi ^{i_{l}})G(x)\upsilon dsG^{(k-l)}(x;\xi
^{i_{l+1}},\ldots ,\xi ^{i_{k}})\upsilon 
\end{eqnarray*}%
with $l\leq k-2$ and%
\begin{equation*}
u^{(l)}(t,x+G(x)\upsilon ;\xi ^{i_{1}},\ldots ,\xi
^{i_{l}})G^{(l_{1})}(x;\xi ^{i_{1}^{1}},\ldots ,\xi ^{i_{k_{1}}^{1}})\ldots
G^{(l_{m})}(x;\xi ^{i_{1}^{m}},\ldots ,\xi ^{j_{k_{m}}^{m}})
\end{equation*}%
with $m\geq 2,l\leq k,l+l_{1}+\ldots +l_{m}=k$ and $(\xi ^{i_{1}},\ldots
,\xi ^{i_{l}},\ldots ,\xi ^{i_{k_{m}}^{m}})$ being a permutation of $\xi
^{1},\ldots ,\xi ^{k}$. In any case, there is a constant $C$ independent of $%
\varepsilon $ and $u_{\varepsilon }$ so that for all $(t,x)\in \lbrack
0,T]\times \mathbf{R}^{d},\xi ^{i}\in \mathbf{R}^{d},$ 
\begin{eqnarray}
|A(u_{\varepsilon },t,x,\xi ^{1},\ldots ,\xi ^{k})| &\leq &C(|u_{\varepsilon
}(t,\cdot )|_{k}+|f(t,\cdot )|_{k})|\xi ^{1}|\ldots |\xi ^{k}|,  \label{7} \\
|A(u_{\varepsilon },t,\cdot ,\xi ^{1},\ldots ,\xi ^{k})|_{\beta -k} &\leq
&C(|u_{\varepsilon }(t,\cdot )|_{\beta }+|f(t,\cdot )|_{\beta })|\xi
^{1}|\ldots |\xi ^{k}|,  \notag
\end{eqnarray}%
and 
\begin{eqnarray}
&&|A(u_{\varepsilon },t,x,\bar{\xi}^{1},\ldots ,\bar{\xi}^{k})-A(u_{%
\varepsilon },t,x,\xi ^{1},\ldots ,\xi ^{k})|  \label{8} \\
&\leq &C(|f(t,\cdot )|_{k}+|u_{\varepsilon }(t,\cdot
)|_{k})\sum_{l=1}^{k}|\xi ^{1}|\ldots |\xi ^{l-1}||\bar{\xi}^{l}-\xi ^{l}||%
\bar{\xi}^{l+1}|\ldots |\bar{\xi}^{k}|).  \notag
\end{eqnarray}

On the other hand, for any $(s,x)\in H$ with the processes defined in (\ref%
{4}), it follows by It\^{o} formula,%
\begin{eqnarray*}
&&\mathbf{E[}u_{\varepsilon }^{(k)}(T,U_{T},V_{T}^{1},\ldots
,V_{T}^{k})-u_{\varepsilon }^{(k)}(s,x,\xi ^{1},\ldots ,\xi ^{k})] \\
&& \\
&=&\mathbf{E[}g_{\varepsilon }^{(k)}(U_{T},V_{T}^{1},\ldots
,V_{T}^{k})-u_{\varepsilon }^{(k)}(s,x,\xi ^{1},\ldots ,\xi ^{k})] \\
&=&\mathbf{E}\int_{s}^{T}[\partial _{t}u_{\varepsilon
}^{(k)}(t,U_{t}{},V_{t}^{1},\ldots ,V_{t}^{k})+\mathcal{P}%
_{U_{t}}^{\varepsilon }u_{\varepsilon }^{(k)}(t,U_{t},V_{t}^{1},\ldots
,V_{t}^{k})]dt \\
&=&\mathbf{E}\int_{s}^{T}A(u_{\varepsilon },t,U_{t},V_{t}^{1},\ldots
,V_{t}^{k})]dt
\end{eqnarray*}%
and%
\begin{eqnarray*}
&&\mathbf{E}[u_{\varepsilon }^{(k)}(T,U_{T}+H_{T},\bar{V}_{T}^{1},\ldots ,%
\bar{V}_{T}^{k})-u_{\varepsilon }^{(k)}(T,U_{T},V_{T}^{1},\ldots ,V_{T}^{k})]
\\
&& \\
&&-[u_{\varepsilon }^{(k)}(s,x+h,\xi ^{1},\ldots ,\xi ^{k})-u_{\varepsilon
}^{(k)}(s,x,\xi ^{1},\ldots ,\xi ^{k})] \\
&& \\
&=&\mathbf{E}[g^{(k)}(U_{T}+H_{T},\bar{V}_{T}^{1},\ldots ,\bar{V}%
_{T}^{k})-g^{(k)}(U_{T},V_{T}^{1},\ldots ,V_{T}^{k})] \\
&& \\
&&-[u_{\varepsilon }^{(k)}(s,x+h,\xi ^{1},\ldots ,\xi ^{k})-u_{\varepsilon
}^{(k)}(s,x,\xi ^{1},\ldots ,\xi ^{k})] \\
&=&\mathbf{E}\int_{s}^{T}\{[\partial _{t}u_{\varepsilon
}^{(k)}(t,U_{t}+H_{t},\bar{V}_{t}^{1},\ldots ,\bar{V}_{t}^{k})+\mathcal{P}%
_{U_{t}+H_{t}}^{\varepsilon }u_{\varepsilon }^{(k)}(t,U_{t}+H_{t},\bar{V}%
_{t}^{1},\ldots ,\bar{V}_{t}^{k})] \\
&&-[\partial _{t}u_{\varepsilon }^{(k)}(t,U_{t}{},V_{t}^{1},\ldots
,V_{t}^{k})+\mathcal{P}_{U_{t}}^{\varepsilon }u_{\varepsilon
}^{(k)}(t,U_{t},V_{t}^{1},\ldots ,V_{t}^{k})]\}dt \\
&=&\mathbf{E}\int_{s}^{T}[A(u_{\varepsilon },t,U_{t}+H_{t},\bar{V}%
_{t}^{1},\ldots ,\bar{V}_{t}^{k})-A(u_{\varepsilon
},t,U_{t},V_{t}^{1},\ldots ,V_{t}^{k})]dt
\end{eqnarray*}%
Since by (\ref{7}) 
\begin{equation*}
|A(u_{\varepsilon },t,U_{t},V_{t}^{1},\ldots ,V_{t}^{k})|\leq
C(|u_{\varepsilon }(t,\cdot )|_{k}+|f(t,\cdot )|_{k})|V_{t}^{1}|\ldots
|V_{t}^{k}|,
\end{equation*}%
it follows by Lemma \ref{lem2} and H\"{o}lder inequality,%
\begin{equation}
\mathbf{E}|A(u_{\varepsilon },t,U_{t},V_{t}^{1},\ldots ,V_{t}^{k})|\leq
C(|u_{\varepsilon }(t,\cdot )|_{k}+|f(t,\cdot )|_{k})|\xi ^{1}|\ldots |\xi
^{k}|.  \label{9}
\end{equation}%
Since%
\begin{eqnarray*}
&&|A(u_{\varepsilon },t,U_{t}+H_{t},\bar{V}_{t}^{1},\ldots ,\bar{V}%
_{t}^{k})-A(u_{\varepsilon },t,U_{t},V_{t}^{1},\ldots ,V_{t}^{k})| \\
&& \\
&\leq &|A(u_{\varepsilon },t,U_{t}+H_{t},\bar{V}_{t}^{1},\ldots ,\bar{V}%
_{t}^{k})-A(u_{\varepsilon },t,U_{t},\bar{V}_{t}^{1},\ldots ,\bar{V}%
_{t}^{k})| \\
&& \\
&&+|A(u_{\varepsilon },t,U_{t},\bar{V}_{t}^{1},\ldots ,\bar{V}%
_{t}^{k})-A(u_{\varepsilon },t,U_{t},V_{t}^{1},\ldots ,V_{t}^{k})| \\
&=&A_{1}+A_{2},
\end{eqnarray*}%
it follows by the estimates (\ref{7}), (\ref{8}) and Lemma \ref{lem2} that%
\begin{eqnarray*}
\mathbf{E}A_{1} &\leq &C(\mathbf{E}|H_{t}|^{2(\beta -k)})^{1/2}\left(
|f(t,\cdot )|_{\beta }+|u(t,\cdot )|_{\beta }\right)  \\
&\leq &C|h|^{\beta -k}\left( |f(t,\cdot )|_{\beta }+|u(t,\cdot )|_{\beta
}\right) .
\end{eqnarray*}%
and for $|h|\leq 1$%
\begin{eqnarray*}
\mathbf{E}A_{2} &\leq &C(|f(t,\cdot )|_{k}+|u(t,\cdot )|_{k})\sum_{l=1}^{k}%
\mathbf{E}|V_{t}^{1}|\ldots |V_{t}^{l-1}||\bar{V}_{t}^{l}-V_{t}^{l}||\bar{V}%
_{t}^{l+1}|\ldots |\bar{V}_{t}^{k}|) \\
&\leq &C(|f(t,\cdot )|_{k}+|u(t,\cdot )|_{k})\sum_{l}(\mathbf{E}[|\bar{V}%
_{t}^{l}-V_{t}^{l}|^{2}])^{1/2}|\xi ^{1}|\ldots |\xi ^{l-1}||\xi
^{l+1}|\ldots |\xi ^{k}| \\
&\leq &C(|f(t,\cdot )|_{k}+|u(t,\cdot )|_{k})|\xi ^{1}|\ldots |\xi
^{k}||h|^{\beta -k}.
\end{eqnarray*}

Similarly, we estimate%
\begin{equation*}
\mathbf{E|}g_{\varepsilon }^{(k)}(U_{T},V_{T}^{1},\ldots ,V_{T}^{k})|\leq
C|g|_{k}|\xi ^{1}|\ldots |\xi ^{k}|
\end{equation*}%
and for $|h|\leq 1$%
\begin{eqnarray*}
&&\mathbf{E}|g_{\varepsilon }^{(k)}(U_{T}+H_{T},\bar{V}_{T}^{1},\ldots ,\bar{%
V}_{T}^{k})-g_{\varepsilon }^{(k)}(U_{T},V_{T}^{1},\ldots ,V_{T}^{k})| \\
&\leq &C|g|_{\beta }|h|^{\beta -k}|\xi ^{1}|\ldots |\xi ^{k}|.
\end{eqnarray*}

So, 
\begin{eqnarray*}
&&|u_{\varepsilon }^{(k)}(s,x;\xi ^{1},\ldots ,\xi ^{k})|_{0} \\
&\leq &C|\xi ^{1}|\ldots |\xi ^{k}|[|g|_{k}+\int_{s}^{T}(|u_{\varepsilon
}(t,\cdot )|_{k}+|f(t,\cdot )|_{k})dt],0\leq s\leq T,
\end{eqnarray*}%
and by Gronwall lemma,%
\begin{equation*}
\sup_{0\leq s\leq T}|u_{\varepsilon }^{(k)}(s,x;\xi ^{1},\ldots ,\xi
^{k})|_{0}\leq C|\xi ^{1}|\ldots |\xi ^{k}|[|g|_{k}+\int_{0}^{T}|f(t,\cdot
)|_{k})dt].
\end{equation*}%
Also, for $|h|\leq 1,x\in \mathbf{R}^{d},0\leq s\leq T,$%
\begin{eqnarray*}
&&|u_{\varepsilon }^{(k)}(s,x+h,\xi ^{1},\ldots ,\xi ^{k})-u_{\varepsilon
}^{(k)}(s,x,\xi ^{1},\ldots ,\xi ^{k})| \\
&\leq &C|h|^{\beta -k}|\xi ^{1}|\ldots |\xi ^{k}|[|g|_{\beta
}+\int_{s}^{T}(|f(t,\cdot )|_{\beta }+|u(t,\cdot )|_{\beta })dt],
\end{eqnarray*}%
and by Gronwall lemma,%
\begin{eqnarray*}
&&\sup_{0\leq s\leq T}|u^{(k)}(s,\cdot ,\xi ^{1},\ldots ,\xi ^{k})|_{\beta
-k} \\
&\leq &C|\xi ^{1}|\ldots |\xi ^{k}|[|g|_{\beta }+\int_{0}^{T}|f(t,\cdot
)|_{\beta }dt]
\end{eqnarray*}%
Therefore for each $\beta \in (\alpha ,2\alpha ],$%
\begin{equation}
\sup_{\varepsilon \in (0,1)}|u_{\varepsilon }|_{\beta }\leq C[|g|_{\beta
}+\int_{0}^{T}|f(t,\cdot )|_{\beta }dt].  \label{10}
\end{equation}%
Since for each $(s,x)\in H,$%
\begin{equation}
u_{\varepsilon }(s,x)=g_{\varepsilon }(x)+\int_{s}^{T}[L^{\varepsilon
}u_{\varepsilon }(t,x)-f(t,x)]dt,  \label{11}
\end{equation}%
and there is a constant $C>0$ so that for all $(t,x)\in H,h\in \mathbf{R}%
^{d},$ 
\begin{eqnarray}
&&|\partial _{t}u_{\varepsilon }(t,x+h)-\partial _{t}u_{\varepsilon }(t,x)|
\label{12} \\
&\leq &|L_{x+h}^{\varepsilon }u(t,x+h)-L^{\varepsilon }u_{\varepsilon
}u(t,x)|+|f(t,x+h)-f(t,x)|  \notag \\
&\leq &C|h|^{\tilde{\beta}-\alpha }(|u_{\varepsilon }|_{\tilde{\beta}%
}+|f|_{\beta })  \notag
\end{eqnarray}%
for some $\tilde{\beta}\in (\alpha ,\alpha +\alpha \wedge 1).$ It follows
from (\ref{10}) and (\ref{12}) that there is a sequence $\varepsilon
_{n}\rightarrow 0$ and $u\in \tilde{C}^{\beta }(H)$ such that such $%
u_{\varepsilon _{n}}\rightarrow u$ uniformly on compact sets of $H$. By (\ref%
{10}), $L^{\varepsilon }u_{\varepsilon }(t,x)\rightarrow Lu(t,x)$ pointwise
and passing to the limit in (\ref{11}), we see that $u\in \tilde{C}^{\beta
}(H)$ is a solution to (\ref{eq1}).

\emph{2. Uniqueness.} Let $u^{1},u^{2}\in \tilde{C}^{\beta }(H)$ be two
solutions to (\ref{eq1}). Then $v=u^{1}-u^{2}$ satisfies (\ref{eq1}) with $%
g=0,f=0$. Let $X_{t}^{s,x}$ be the solution to (\ref{one}) starting from $%
x\in \mathbf{R}^{d}$ at time moment $s$. Then by It\^{o} formula, 
\begin{eqnarray*}
-v(s,x) &=&\mathbf{E}v(T,X_{T}^{s,x})-v(s,x) \\
&=&\mathbf{E}\int_{s}^{T}\left[ \partial
_{t}v(r,X_{r}^{s,x})+Lv(r,X_{r}^{s,x})\right] dr=0
\end{eqnarray*}
and uniqueness follows.

\section{One-Step Estimate and Proof of Main Results\newline
}

First, we modify the mollified function estimates for the Lipshitz spaces.
Let $w\in C_{0}^{\infty }(\mathbf{R}^{d}),$ be a nonnegative smooth function
with support in $\{|x|\leq 1\}$ such that $w(x)=w(|x|)$, $x\in \mathbf{R}%
^{d},$ and $\int w(x)dx=1.$ Due to the symmetry, 
\begin{equation}
\int_{\mathbf{R}^{d}}x^{i}w(x)dx=0,i=1,\ldots ,d.  \label{ff26}
\end{equation}%
For $x\in \mathbf{R}^{d}$ and $\varepsilon \in (0,1)$, define $%
w^{\varepsilon }(x)=\varepsilon ^{-d}w\left( \frac{x}{\varepsilon }\right) $
and the convolution 
\begin{equation}
f^{\varepsilon }(x)=\int f(y)w^{\varepsilon }(x-y)dy=\int
f(x-y)w^{\varepsilon }(y)dy,x\in \mathbf{R}^{d}.  \label{maf7}
\end{equation}

\begin{lemma}
\label{lemn1}Let $\alpha <\beta \leq 2\alpha ,f\in \tilde{C}^{\beta -\alpha
}(\mathbf{R}^{d})$. Then%
\begin{equation}
|f^{\varepsilon }(x)-f(x)|\leq C\varepsilon ^{\beta -\alpha }|f|_{\beta
-\alpha },x\in \mathbf{R}^{d},  \label{ma8}
\end{equation}%
and there is a constant $C$ such that%
\begin{equation}
|Lf^{\varepsilon }|\leq C\varepsilon ^{\beta -2\alpha }|f|_{\beta -\alpha }
\label{ma9}
\end{equation}
\end{lemma}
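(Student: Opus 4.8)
Write $\gamma=\beta-\alpha$; by hypothesis $0<\gamma\leq\alpha\leq 2$ and $f\in\tilde{C}^{\gamma}(\mathbf{R}^{d})$. Inequality (\ref{ma8}) is the classical mollification estimate. If $\gamma\leq 1$, write $f^{\varepsilon}(x)-f(x)=\int[f(x-z)-f(x)]w^{\varepsilon}(z)\,dz$ and bound the integrand by $|f|_{\gamma}|z|^{\gamma}$; if $\gamma\in(1,2]$, first use the symmetry (\ref{ff26}), i.e. $\int z\,w^{\varepsilon}(z)\,dz=0$, to subtract the linear term, then apply Taylor's formula with H\"older remainder for $\nabla f$, so that the integrand is $\leq C|f|_{\gamma}|z|^{\gamma}$. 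In both cases the factor $\int|z|^{\gamma}w^{\varepsilon}(z)\,dz\leq C\varepsilon^{\gamma}$ gives (\ref{ma8}).

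For (\ref{ma9}) I first record, by moving $[\gamma]^{-}$ derivatives onto $f$, integrating the remaining ones by parts onto $w^{\varepsilon}$, and using $\int D^{k}w^{\varepsilon}=0$ for $k\geq 1$ together with the $\{\gamma\}^{+}$-H\"older modulus of $D^{[\gamma]^{-}}f$, the standard estimates
\[
|\nabla f^{\varepsilon}|_{0}\leq C\varepsilon^{(\gamma-1)\wedge 0}|f|_{\gamma},\qquad |\partial^{2}f^{\varepsilon}|_{0}\leq C\varepsilon^{(\gamma-2)\wedge 0}|f|_{\gamma},
\]
together with, when $\gamma>1$, a bound $C|f|_{\gamma}$ for the $(\gamma-1)$-H\"older seminorm of $\nabla f^{\varepsilon}$. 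Recall also that $a\equiv 0$ for $\alpha\in(0,1)$, $b\equiv 0$ for $\alpha\in(0,2)$, and $\chi_{\alpha}\equiv 0$ for $\alpha\leq 1$.

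Now split $Lf^{\varepsilon}(x)$ into: (i) the drift term $(a(x),\nabla f^{\varepsilon}(x))$, which occurs only for $\alpha\geq 1$ and is $\leq C|\nabla f^{\varepsilon}|_{0}\leq C\varepsilon^{(\gamma-1)\wedge 0}|f|_{\gamma}\leq C\varepsilon^{\gamma-\alpha}|f|_{\gamma}$, the last step because $(\gamma-1)\wedge 0\geq\gamma-\alpha$ when $\alpha\geq 1$ and $\gamma\leq\alpha$; (ii) the second-order term $\frac{1}{2}\sum_{i,j}(b^{i}(x),b^{j}(x))\partial^{2}_{ij}f^{\varepsilon}(x)$, which occurs only for $\alpha=2$ and is $\leq C|\partial^{2}f^{\varepsilon}|_{0}\leq C\varepsilon^{(\gamma-2)\wedge 0}|f|_{\gamma}=C\varepsilon^{\gamma-\alpha}|f|_{\gamma}$; (iii) the large-jump integral $\int_{|\upsilon|>1}[f^{\varepsilon}(x+G(x)\upsilon)-f^{\varepsilon}(x)]\,\pi(d\upsilon)$, which is $\leq 2\pi(\{|\upsilon|>1\})\,|f|_{0}\leq C|f|_{\gamma}\leq C\varepsilon^{\gamma-\alpha}|f|_{\gamma}$ since $\gamma\leq\alpha$; and (iv) the small-jump integral $\int_{|\upsilon|\leq 1}[f^{\varepsilon}(x+G(x)\upsilon)-f^{\varepsilon}(x)-\chi_{\alpha}(\upsilon)(\nabla f^{\varepsilon}(x),G(x)\upsilon)]\,\pi(d\upsilon)$.

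Term (iv) is the heart of the proof. Setting $h=G(x)\upsilon$ (so $|h|\leq|G|_{0}|\upsilon|$), I split the $\upsilon$-integral at the mollification scale into $\{|\upsilon|\leq\varepsilon\}$ and $\{\varepsilon<|\upsilon|\leq 1\}$. On $\{\varepsilon<|\upsilon|\leq 1\}$: if $\alpha\leq 1$ there is no compensator and $|f^{\varepsilon}(x+h)-f^{\varepsilon}(x)|\leq|f|_{\gamma}|h|^{\gamma}$; if $\alpha>1$, Taylor's formula gives $|f^{\varepsilon}(x+h)-f^{\varepsilon}(x)-(\nabla f^{\varepsilon}(x),h)|\leq C|f|_{\gamma}|h|^{\gamma}+C|\nabla f^{\varepsilon}|_{0}|h|$, the second summand being present and needed only when $\gamma\leq 1$. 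Since $\gamma\leq\alpha$ and $|\upsilon|>\varepsilon$, one has $\int_{\varepsilon<|\upsilon|\leq 1}|\upsilon|^{\gamma}\,d\pi\leq\varepsilon^{\gamma-\alpha}\int_{|\upsilon|\leq 1}|\upsilon|^{\alpha}\,d\pi$ and, for $\alpha\geq 1$, $\int_{\varepsilon<|\upsilon|\leq 1}|\upsilon|\,d\pi\leq\varepsilon^{1-\alpha}\int_{|\upsilon|\leq 1}|\upsilon|^{\alpha}\,d\pi$; combined with $|\nabla f^{\varepsilon}|_{0}\leq C\varepsilon^{\gamma-1}|f|_{\gamma}$ when $\gamma<1$, this region contributes $\leq C\varepsilon^{\gamma-\alpha}|f|_{\gamma}$. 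On $\{|\upsilon|\leq\varepsilon\}$: if $\alpha\leq 1$, $|f^{\varepsilon}(x+h)-f^{\varepsilon}(x)|\leq C|\nabla f^{\varepsilon}|_{0}|h|\leq C\varepsilon^{\gamma-1}|f|_{\gamma}|\upsilon|$ and $\int_{|\upsilon|\leq\varepsilon}|\upsilon|\,d\pi\leq\varepsilon^{1-\alpha}\int_{|\upsilon|\leq 1}|\upsilon|^{\alpha}\,d\pi$; if $\alpha>1$, the second-order Taylor bound gives $|f^{\varepsilon}(x+h)-f^{\varepsilon}(x)-(\nabla f^{\varepsilon}(x),h)|\leq C|\partial^{2}f^{\varepsilon}|_{0}|h|^{2}\leq C\varepsilon^{(\gamma-2)\wedge 0}|f|_{\gamma}|\upsilon|^{2}$ and $\int_{|\upsilon|\leq\varepsilon}|\upsilon|^{2}\,d\pi\leq\varepsilon^{2-\alpha}\int_{|\upsilon|\leq 1}|\upsilon|^{\alpha}\,d\pi$; in either case this region also contributes $\leq C\varepsilon^{\gamma-\alpha}|f|_{\gamma}$. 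Adding (i)--(iv) yields $|Lf^{\varepsilon}|_{0}\leq C\varepsilon^{\gamma-\alpha}|f|_{\gamma}=C\varepsilon^{\beta-2\alpha}|f|_{\beta-\alpha}$. I expect the only genuine obstacle to be the bookkeeping in (iv): one must match the blow-up rates $\varepsilon^{(\gamma-1)\wedge 0}$ and $\varepsilon^{(\gamma-2)\wedge 0}$ of the derivatives of $f^{\varepsilon}$ against the powers of $\varepsilon$ produced, on each of the two $\upsilon$-ranges, by the integrability $\int_{|\upsilon|\leq 1}|\upsilon|^{\alpha}\,d\pi<\infty$, all while tracking whether the compensator $\chi_{\alpha}$ is present; this is precisely where the constraints $\alpha<\beta\leq 2\alpha$ (i.e. $0<\gamma\leq\alpha$) and $\alpha\geq 1$ in the presence of a drift are used, and where splitting the $\upsilon$-integral exactly at scale $\varepsilon$ is essential. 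Everything else is routine.
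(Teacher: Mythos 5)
Your proof of (\ref{ma8}) is essentially identical to the paper's: split on whether $\beta-\alpha\leq 1$ or $\beta-\alpha\in(1,2]$, and in the latter case use the symmetry $\int y\,w^{\varepsilon}(y)\,dy=0$ (equation (\ref{ff26})) to subtract the linear term before applying the H\"older modulus of $\nabla f$.

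For (\ref{ma9}) the paper takes a different and much shorter route: it simply invokes Lemma 17\,(iii) and Corollary 18 of \cite{mikz3} (which supply exactly the bound $|Lf^{\varepsilon}|\leq C\varepsilon^{(\beta-\alpha)-\alpha}|f|_{\beta-\alpha}$ for $\beta-\alpha<\alpha$), and then disposes of the boundary case $\beta=2\alpha$ as ``straightforward.'' You instead give a self-contained derivation, writing $\gamma=\beta-\alpha\in(0,\alpha]$, recording the blow-up rates $|\nabla f^{\varepsilon}|_{0}\leq C\varepsilon^{(\gamma-1)\wedge 0}|f|_{\gamma}$ and $|\partial^{2}f^{\varepsilon}|_{0}\leq C\varepsilon^{\gamma-2}|f|_{\gamma}$, and decomposing $L$ into drift, diffusion, large-jump, and small-jump parts, with the small-jump integral further split at the mollification scale $|\upsilon|=\varepsilon$ so that the $\alpha$-order integrability of $\pi$ near the origin converts the derivative losses into the uniform factor $\varepsilon^{\gamma-\alpha}$. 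I checked the bookkeeping in each case ($\alpha\leq 1$ vs.\ $\alpha>1$, $\gamma\leq 1$ vs.\ $\gamma>1$, both $\upsilon$-ranges) and every term does come out to $C\varepsilon^{\gamma-\alpha}|f|_{\gamma}$; the constraint $\gamma\leq\alpha$ is used exactly where you flag it, and the cases $a\neq 0$, $b\neq 0$ are correctly confined to $\alpha\geq1$, $\alpha=2$. So your argument is correct and covers the whole range $0<\gamma\leq\alpha$ uniformly, including $\gamma=\alpha$, which the paper treats separately. What the paper's route buys is brevity; what yours buys is a closed, reference-free proof and a transparent display of where the hypothesis $\alpha<\beta\leq 2\alpha$ and the integrability $\int_{|\upsilon|\leq1}|\upsilon|^{\alpha}\,d\pi<\infty$ actually enter.
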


\begin{proof}
Indeed, if $\beta -\alpha \leq 1$, then%
\begin{eqnarray*}
|f^{\varepsilon }(x)-f(x)| &\leq &\int |f(x-y)-f(x)|w^{\varepsilon }(y)dy, \\
&\leq &C|f|_{\beta -\alpha }\varepsilon ^{\beta -\alpha }.
\end{eqnarray*}%
If $\beta -\alpha \in (1,2],$ then 
\begin{eqnarray*}
|f^{\varepsilon }(x)-f(x)| &=&|\int f(x+y)-f(x)-(\nabla
f(x),y)w^{\varepsilon }(y)dy| \\
&\leq &\int \int_{0}^{1}|(\nabla f(x+sy)-\nabla f(x),y)|dsw^{\varepsilon
}(y)dydy \\
&\leq &C\varepsilon ^{\beta -\alpha }|f|_{\beta -\alpha }.
\end{eqnarray*}

According to Lemma 17 (iii) and Corollary 18 in \cite{mikz3}, for each $%
\beta $ so that $\beta -\alpha <\alpha $ 
\begin{equation*}
|Lf^{\varepsilon }|\leq C\varepsilon ^{(\beta -\alpha )-\alpha }|f|_{\beta
-\alpha }=C\varepsilon ^{\beta -2\alpha }|f|_{\beta -\alpha }.
\end{equation*}%
The inequality (\ref{ma9}) still holds for $\beta -\alpha =\alpha $ or $%
\beta =2\alpha $ by a straightforward estimate.
\end{proof}

We modify one-step estimate in \cite{mikz3} for Lipschitz spaces as well.

\begin{lemma}
\label{lem:expect}Let $\alpha <\beta \leq \mu \leq 2\alpha ,$%
\begin{equation*}
\int_{|\upsilon |\leq 1}|\upsilon |^{\alpha }d\pi +\int_{|\upsilon
|>1}|\upsilon |^{\mu }d\pi <\infty ,
\end{equation*}
and $a^{i},b^{ij}\in \tilde{C}^{\beta }(\mathbf{R}^{d}),G^{ij}\in \tilde{C}%
^{\beta \vee 1}(\mathbf{R}^{d})$. Then there exists a constant $C$ such that
for all $f\in \tilde{C}^{\beta -\alpha }(\mathbf{R}^{d}),$%
\begin{equation*}
\big\vert\mathbf{E}\big[f(Y_{s})-f(Y_{\tau _{i_{s}}})|\mathcal{F}_{\tau
_{i_{s}}}\big]\big\vert\leq C|f|_{\beta -\alpha }\delta ^{\frac{\beta }{%
\alpha }-1},\forall s\in \lbrack 0,T],
\end{equation*}%
where $i_{s}=i$ if $\tau _{i}\leq s<\tau _{i+1}$.
\end{lemma}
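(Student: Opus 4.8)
The plan is to write the one-step increment $f(Y_s)-f(Y_{\tau_{i_s}})$ via It\^o's formula applied to the process $Y$ over the interval $[\tau_{i_s},s]$, take conditional expectation given $\mathcal F_{\tau_{i_s}}$, and control the resulting terms using the fact that $Lf \in \tilde C^{\beta-\alpha}$ by Lemma \ref{le3}, together with the moment bounds on the increments of $Y$. First I would fix $s\in[\tau_i,\tau_{i+1})$ with $\tau_i = \tau_{i_s}$, write $h = s-\tau_i \le \delta$, and apply It\^o's formula to $f(Y_r)$ for $r\in[\tau_i,s]$; since $Y$ solves \eqref{du}, the bounded-variation part of the decomposition is exactly $\int_{\tau_i}^s L_{Y_{\tau_i}} f(Y_r)\,dr$ (the martingale parts from $dW$ and the compensated Poisson integral vanish in conditional expectation, provided the relevant integrability holds — which it does because $f\in\tilde C^{\beta-\alpha}$ with $\beta-\alpha\le\alpha\le 2$, $G$ is bounded, and $\int(|\upsilon|^\alpha\wedge 1)\pi(d\upsilon)+\int_{|\upsilon|>1}|\upsilon|^\mu\pi(d\upsilon)<\infty$ gives the needed moment control on the big-jump part with $\mu\ge\beta$). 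Hence
\begin{equation*}
\mathbf E\big[f(Y_s)-f(Y_{\tau_i})\mid\mathcal F_{\tau_i}\big]
=\mathbf E\Big[\int_{\tau_i}^{s} L_{Y_{\tau_i}}f(Y_r)\,dr\ \Big|\ \mathcal F_{\tau_i}\Big].
\end{equation*}

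Next I would bound the integrand. The key point is that $L_z f$ for fixed frozen coefficient $z$ has a $\tilde C^{\beta-\alpha}$-norm (in the free variable) controlled by $C|f|_{\beta}$ — but here we only have $f\in\tilde C^{\beta-\alpha}$, so instead one should invoke the version of Lemma \ref{le3} / Proposition 13 of \cite{mikz3} applied directly: for $f\in\tilde C^{\beta-\alpha}(\mathbf R^d)$ one has $|L_z f|_0\le C|f|_{\beta-\alpha}$ is \emph{not} generally true (there is a loss of $\alpha$ derivatives), so the right route is to mollify. Following the standard device used in \cite{mikz3} and in Lemma \ref{lemn1}, write $f = (f-f^\varepsilon)+f^\varepsilon$ with $\varepsilon = \delta^{1/\alpha}$. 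By \eqref{ma8}, $|f-f^\varepsilon|_0\le C\varepsilon^{\beta-\alpha}|f|_{\beta-\alpha}=C\delta^{(\beta-\alpha)/\alpha}|f|_{\beta-\alpha}$, which contributes $\le C\delta^{\beta/\alpha-1}|f|_{\beta-\alpha}$ after dividing by the time horizon — more precisely it contributes directly to $\mathbf E[f(Y_s)-f(Y_{\tau_i})]$ through the difference $(f-f^\varepsilon)(Y_s)-(f-f^\varepsilon)(Y_{\tau_i})$, which is bounded by $2|f-f^\varepsilon|_0\le C\delta^{(\beta-\alpha)/\alpha}|f|_{\beta-\alpha}$; since $(\beta-\alpha)/\alpha=\beta/\alpha-1$ this is the desired order. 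For the smooth part, apply It\^o's formula to $f^\varepsilon(Y_r)$ to get the integrand $L_{Y_{\tau_i}}f^\varepsilon(Y_r)$, and bound it by \eqref{ma9}: $|Lf^\varepsilon|_0\le C\varepsilon^{\beta-2\alpha}|f|_{\beta-\alpha}=C\delta^{(\beta-2\alpha)/\alpha}|f|_{\beta-\alpha}$. Then
\begin{equation*}
\Big|\mathbf E\Big[\int_{\tau_i}^{s} L_{Y_{\tau_i}}f^\varepsilon(Y_r)\,dr\ \Big|\ \mathcal F_{\tau_i}\Big]\Big|
\le \delta\cdot C\delta^{(\beta-2\alpha)/\alpha}|f|_{\beta-\alpha}
= C\delta^{\beta/\alpha-1}|f|_{\beta-\alpha},
\end{equation*}
since $1+(\beta-2\alpha)/\alpha = \beta/\alpha - 1$. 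Here one must be slightly careful that the frozen-coefficient generator $L_{Y_{\tau_i}}$ (rather than $L$) is what appears, but the estimate \eqref{ma9} is uniform over bounded frozen coefficients since $a,b,G$ are bounded, so this causes no difficulty. Adding the two contributions gives the claimed bound.

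The main obstacle is the justification that the martingale parts vanish under the conditional expectation, i.e. genuine integrability of the $dW_r$ and $\tilde q(dr,d\upsilon)$ stochastic integrals of $\nabla f^\varepsilon(Y_r)b(Y_{\tau_i})$ and of $f^\varepsilon(Y_{r-}+G(Y_{\tau_i})\upsilon)-f^\varepsilon(Y_{r-})$ respectively, together with handling the large-jump part $\int_{|\upsilon|>1}[f(Y_{r-}+G(Y_{\tau_i})\upsilon)-f(Y_{r-})]\pi(d\upsilon)$, which is not compensated and must be absorbed into the drift term. For the large jumps, boundedness of $f$ gives $|f(x+G(z)\upsilon)-f(x)|\le 2|f|_0\le 2|f|_{\beta-\alpha}$, and $\pi(\{|\upsilon|>1\})<\infty$ (a consequence of $\int_{|\upsilon|>1}|\upsilon|^\mu\pi(d\upsilon)<\infty$), so this piece contributes $\le C\delta|f|_{\beta-\alpha}\le C\delta^{\beta/\alpha-1}|f|_{\beta-\alpha}$ (using $\beta\le 2\alpha$, hence $\beta/\alpha-1\le 1$). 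The remaining integrability checks are routine given the standing moment assumption on $\pi$ and the linear-growth/boundedness of the coefficients of \eqref{du}; these are exactly the estimates established in \cite{mikz3}, and I would cite them rather than reprove them.
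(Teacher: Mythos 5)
Your proposal is correct and follows essentially the same route as the paper's proof: decompose $f=(f-f^\varepsilon)+f^\varepsilon$, bound the rough part by (\ref{ma8}), bound the smooth part via It\^{o}'s formula and (\ref{ma9}) applied to the frozen-coefficient generator, and balance the two; the only cosmetic difference is that you plug in the optimal mollification scale $\varepsilon=\delta^{1/\alpha}$ explicitly instead of writing the minimization of $F(\varepsilon,\delta)=\varepsilon^{\beta-\alpha}+\varepsilon^{\beta-2\alpha}\delta$ abstractly, and you spell out the martingale-vanishing and large-jump integrability checks that the paper leaves implicit.
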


\begin{proof}
Applying It\^{o}'s formula, for $s\in \lbrack 0,T]$, 
\begin{equation*}
\mathbf{E}[f^{\varepsilon }(Y_{s})-f^{\varepsilon }(Y_{\tau _{i_{s}}})|%
\mathcal{F}_{\tau _{i_{s}}}]=\mathbf{E}\big[\int_{\tau _{i_{s}}}^{s}\big(%
L_{Y_{\tau _{i_{s}}}}f^{\varepsilon }(Y_{r})\big)dr\big|\mathcal{F}_{\tau
_{i_{s}}}\big].
\end{equation*}%
Hence, for $\varepsilon \in (0,1)$, by (\ref{ma8}) and (\ref{ma9}), 
\begin{eqnarray*}
|\mathbf{E}[f(Y_{s})-f(Y_{\tau _{i_{s}}})|\mathcal{F}_{\tau _{i_{s}}}]|
&\leq &|\mathbf{E}[(f-f^{\varepsilon })(Y_{s})-(f-f^{\varepsilon })(Y_{\tau
_{i_{s}}})|\mathcal{F}_{\tau _{i_{s}}}]| \\
&&+|\mathbf{E}[f^{\varepsilon }(Y_{s})-f^{\varepsilon }(Y_{\tau _{i_{s}}})|%
\mathcal{F}_{\tau _{i_{s}}}]| \\
&\leq &CF(\varepsilon ,\delta )|f|_{\beta -\alpha },
\end{eqnarray*}%
with a constant $C$ independent of $\varepsilon ,f$ and $F(\varepsilon
,\delta )=\varepsilon ^{\beta -\alpha }+\varepsilon ^{\beta -2\alpha }\delta
.$ Minimizing $F(\varepsilon ,\delta )$ in $\varepsilon \in (0,1)$, we
obtain 
\begin{equation*}
|\mathbf{E}[f(Y_{s})-f(Y_{\tau _{i_{s}}})|\mathcal{F}_{\tau _{i_{s}}}]|\leq
C\delta ^{\frac{\beta }{\alpha }-1}|f|_{\beta }.
\end{equation*}
\end{proof}

\subsection{Proof of Theorem \protect\ref{thm:main}}

Let $u\in \tilde{C}^{\beta }(H)$ be the unique solution to (\ref{eq1}) with $%
f=0$. By It\^{o}'s formula,%
\begin{eqnarray*}
\mathbf{E}[u(0,X_{0})] &=&\mathbf{E}[u(T,X_{T})]-\mathbf{E}\big[\int_{0}^{T}%
\big(\partial _{t}u(s,X_{s})+L_{X_{s}}u(s,X_{s})\big)ds\big] \\
&=&\mathbf{E}\big[g(X_{T})\big]
\end{eqnarray*}%
and 
\begin{equation}
\mathbf{E}[u(0,X_{0})]=\mathbf{E}[u(0,Y_{0})].  \label{eqn:expect_terminal}
\end{equation}

By Lemma \ref{le3}, 
\begin{equation}
|L_{z}u(s,\cdot )|_{\beta -\alpha }\leq C|g|_{\beta },|\partial
_{t}u(s,\cdot )|_{\beta -\alpha }\leq C|g|_{\beta },s\in \lbrack 0,T],z\in 
\mathbf{R}^{d}.  \label{maf90}
\end{equation}

Then, by It\^{o}'s formula and (\ref{maf90}), it follows that 
\begin{eqnarray*}
&&\mathbf{E}[g(Y_{T})]-\mathbf{E}[g(X_{T})]=\mathbf{E}[u(T,Y_{T})]-\mathbf{E}%
[u(0,Y_{0})] \\
&=&\mathbf{E}\Big[\int_{0}^{T}\Big\{\big[\partial _{t}u(s,Y_{s})-\partial
_{t}u(s,Y_{\tau _{i_{s}}})\big] \\
&&+\big[L_{Y_{\tau _{i_{s}}}}u(s,Y_{s})-L_{Y_{\tau _{i_{s}}}}u(s,Y_{\tau
_{i_{s}}})\big]\Big\}ds\Big].
\end{eqnarray*}

Hence, by (\ref{maf90}) and Lemma~\ref{lem:expect}, there exists a constant $%
C$ independent of $g$ such that 
\begin{equation*}
|\mathbf{E}g(Y_{T})-\mathbf{E}g(X_{T})|\leq C\delta ^{\frac{\beta }{\alpha }%
-1}|g|_{\beta }.
\end{equation*}%
The statement of Theorem \ref{thm:main} follows.

\subsubsection{Proof of Corollary \protect\ref{co2}}

According to \cite{re}, there is a rapidly decreasing smooth function $w\in 
\mathcal{S}(\mathbf{R}^{d})$, the Schwartz space, such that $\int w(x)dx=1$
and all moments are zero:%
\begin{equation*}
\int w(x)x^{\gamma }dx=0,\gamma \in \mathbf{N}^{d},\gamma \neq \mathbf{0},
\end{equation*}%
where $x^{\gamma }=x_{1}^{\gamma _{1}}\ldots x_{d}^{\gamma
_{d}},x=(x_{1},\ldots ,x_{d})\in \mathbf{R}^{d}$. Let $\varepsilon \in
(0,1),w_{\varepsilon }(x)=\varepsilon ^{-d}w(x/\varepsilon ),x\in \mathbf{R}%
^{d},$ 
\begin{equation*}
g_{\varepsilon }(x)=\int g(x-y)w_{\varepsilon }(y)dy,x\in \mathbf{R}^{d}.
\end{equation*}%
We will show that for $\beta \in (0,4],\nu \leq \beta ,$%
\begin{eqnarray}
\sup_{x}|g_{\varepsilon }(x)-g(x)| &\leq &C|g|_{\nu }\varepsilon ^{\nu },
\label{fo6} \\
|g_{\varepsilon }|_{\beta } &\leq &C\varepsilon ^{\nu -\beta }|g|_{\nu } 
\notag
\end{eqnarray}%
(A standard mollifier could be taken if $\nu \leq 2$, see Lemma \ref{lemn1}%
). Since for $x\in \mathbf{R}^{d},$%
\begin{equation*}
g_{\varepsilon }(x)-g(x)=\int [g(x-y)-g(x)-\sum_{1\leq |\gamma |\leq \lbrack
\nu ]^{-}}\frac{D^{\gamma }g(x)}{\gamma !}y^{\gamma }]w_{\varepsilon }(y)dy,
\end{equation*}%
it follows that 
\begin{equation*}
\sup_{x}|g_{\varepsilon }(x)-g(x)|\leq C|g|_{\nu }\varepsilon ^{\nu }.
\end{equation*}

If $\beta $ is an integer, $\gamma \in \mathbf{N}^{d},|\gamma |=\beta $ and $%
\gamma =\mu +\mu ^{\prime }$ with $|\mu |=[\nu ],\mu ^{\prime }\neq \mathbf{0%
}$, then 
\begin{eqnarray*}
D^{\gamma }g_{\varepsilon }(x) &=&\varepsilon ^{-[\gamma ]}\int
g(y)(D^{\gamma }w)_{\varepsilon }(x-y)dy=\varepsilon ^{\lbrack \nu ]-\beta
}\int D^{\mu }g(y)(D^{\mu ^{\prime }}w)_{\varepsilon }(x-y)dy \\
&=&\varepsilon ^{\lbrack \nu ]-\beta }\int [D^{\mu }g(y)-D^{\mu
}g(x)](D^{\mu ^{\prime }}w)_{\varepsilon }(x-y)dydy
\end{eqnarray*}%
and 
\begin{equation*}
|D^{\gamma }g_{\varepsilon }(x)|\leq C\varepsilon ^{\nu -\beta }|g|_{\nu
},x\in \mathbf{R}^{d}.
\end{equation*}%
If $\beta $ is not an integer, the second inequality in (\ref{fo6}) follows
by interpolation.

According to Theorem \ref{thm:main} and (\ref{fo6}), 
\begin{eqnarray*}
|\mathbf{E}g(Y_{T})-\mathbf{E}g(X_{T})| &\leq &2\sup_{x}|g_{\varepsilon
}(x)-g(x)|+|\mathbf{E}g_{\varepsilon }(Y_{T})-\mathbf{E}g_{\varepsilon
}(X_{T})| \\
&\leq &C|g|_{\nu }F(\varepsilon ,\delta ),
\end{eqnarray*}%
where $F(\varepsilon ,\delta )=\varepsilon ^{\nu }+\varepsilon ^{\nu -\beta
}\delta ^{\frac{\beta }{\alpha }-1}.$ Minimizing $F$ in $\varepsilon \in
(0,1)$, the statement of Corollary \ref{co2} follows.

\subsection{Approximate simple Euler scheme}

Consider the approximation of $X_{t}$ defined by the increments of $\tilde{Z}%
_{t}=Z_{t}^{\sigma }+R_{t}^{\sigma },0\leq t\leq T,$ in Example \ref{ex1}.
Obviously, $\tilde{Z}_{t}$ depends on $\alpha ,\beta $ and $\sigma $. Its
generator is 
\begin{eqnarray*}
\tilde{L}v(x) &=&\int_{0}^{t}\int_{|\upsilon |>\varepsilon }[v(s,x+\upsilon
)-v(s,x)-\chi _{\alpha }(\upsilon )\left( \nabla v(s,x),\upsilon \right)
]\pi (d\upsilon ) \\
&&+R^{\alpha ,\beta }v(x),
\end{eqnarray*}%
where%
\begin{equation*}
R^{\alpha ,\beta }v(x)=\left\{ 
\begin{array}{ll}
\int_{|\upsilon |\leq \sigma }\left( \nabla v(x),\upsilon \right) d\pi & 
\text{if }\alpha <\beta \in (1,2],\alpha \in (0,1], \\ 
&  \\ 
\frac{1}{2}\sum_{i,j}(B^{\sigma }{}^{\ast }B^{\sigma })_{ij}\partial
_{ij}^{2}v(x) & \text{if }\alpha <\beta \in (2,4],\alpha \in (1,2], \\ 
&  \\ 
0 & \text{otherwise.}%
\end{array}%
\right.
\end{equation*}

\begin{lemma}
\label{lemn2}Let $\alpha <\beta \leq 2\alpha $ and $h\in \tilde{C}^{\beta }(%
\mathbf{R}^{d})$. Then there is a constant $C$ such that for every $\mathbb{F%
}^{Z^{\sigma }}$-stopping times $0\leq \tau \leq \tau ^{\prime }\leq T$ $\ $%
we have%
\begin{equation*}
|\mathbf{E[}h(Z_{\tau ^{\prime }}-Z_{\tau })-h(\tilde{Z}_{\tau ^{\prime }}-%
\tilde{Z}_{\tau })|\mathcal{F}_{\tau }]|\leq C\phi (\sigma )|h|_{\beta }%
\mathbf{E}[\tau ^{\prime }-\tau |\mathcal{F}_{\tau }]\text{,}
\end{equation*}%
with%
\begin{equation*}
\phi (\sigma )=\int_{|\upsilon |\leq \sigma }|\upsilon |^{\beta \wedge 3}d\pi
\end{equation*}%
(here $\mathbb{F}^{Z^{\sigma }}$ is the natural filtration of $\sigma $%
-algebras generated by $Z^{\sigma }$).
\end{lemma}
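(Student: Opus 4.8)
\textbf{Proof proposal for Lemma \ref{lemn2}.}

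The plan is to reduce the claim to a one-step Taylor/Dynkin estimate on the difference of the two L\'{e}vy generators and then integrate it. Write $\Delta_\tau h := h(Z_{\tau'}-Z_\tau)-h(\tilde Z_{\tau'}-\tilde Z_\tau)$. Since $Z$ and $Z^\sigma$ differ only through the small-jump part $|\upsilon|\le\sigma$, and $\tilde Z = Z^\sigma + R^\sigma$, the strong Markov property at $\tau$ (using that $\tau,\tau'$ are $\mathbb F^{Z^\sigma}$-stopping times and that all the processes involved have independent, stationary increments) lets me pass to the representation
\begin{equation*}
\mathbf{E}[\Delta_\tau h \mid \mathcal F_\tau] = \mathbf{E}\Big[\int_\tau^{\tau'} \big(Lh - \tilde L h\big)(Z_s - Z_\tau)\, ds \,\Big|\, \mathcal F_\tau\Big],
\end{equation*}
obtained by applying It\^o's formula to $s\mapsto h(Z_s-Z_\tau)$ and to $s\mapsto h(\tilde Z_s - \tilde Z_\tau)$ between $\tau$ and $\tau'$ (here $L$ is the generator of $Z$ acting in the space variable, i.e. $L$ with $G\equiv I$, $a=b=0$; a mollification argument as in Lemma \ref{le3} or in the proof of Theorem \ref{thm:StoCP} handles the fact that $h\in\tilde C^\beta$ rather than $C^\infty$). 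So everything comes down to bounding $|(L-\tilde L)h|_0$ by $C\phi(\sigma)|h|_\beta$, after which the result follows by taking absolute values inside the integral and using $\int_\tau^{\tau'}ds = \tau'-\tau$.

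Next I would compute $(L-\tilde L)h$ explicitly in each of the three regimes of $R^\sigma$. In all cases the ``large-jump'' parts $|\upsilon|>\sigma$ cancel, so only the region $|\upsilon|\le\sigma$ survives, together with the correction $R^{\alpha,\beta}$. In the drift case ($\alpha<\beta\in(1,2]$, $\alpha\in(0,1]$, so $\chi_\alpha\equiv 0$ and $\beta\wedge 3=\beta$), one gets
\begin{equation*}
(L-\tilde L)h(x) = \int_{|\upsilon|\le\sigma}\big[h(x+\upsilon)-h(x)-(\nabla h(x),\upsilon)\big]\,\pi(d\upsilon),
\end{equation*}
and since $\beta\in(1,2]$ a first-order Taylor expansion with the H\"older bound on $\nabla h$ gives $|h(x+\upsilon)-h(x)-(\nabla h(x),\upsilon)|\le C|h|_\beta|\upsilon|^\beta$, hence the bound $C|h|_\beta\int_{|\upsilon|\le\sigma}|\upsilon|^\beta d\pi$. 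In the diffusion case ($\alpha<\beta\in(2,4]$, $\alpha\in(1,2]$, so $\chi_\alpha=\mathbf 1_{|\upsilon|\le 1}$ and $\beta\wedge 3=3$ when $\beta\ge 3$, $=\beta$ otherwise) the truncated first moments also cancel and one is left with
\begin{equation*}
(L-\tilde L)h(x) = \int_{|\upsilon|\le\sigma}\Big[h(x+\upsilon)-h(x)-(\nabla h(x),\upsilon)-\tfrac12\textstyle\sum_{i,j}\partial^2_{ij}h(x)\upsilon_i\upsilon_j\Big]\pi(d\upsilon),
\end{equation*}
and a second-order Taylor expansion with the H\"older modulus of $\partial^2 h$ gives the integrand bounded by $C|h|_\beta|\upsilon|^{\beta\wedge 3}$; note $|h|_\beta$ controls $|\partial^2 h|_0$ and the $(\beta-2)$-H\"older seminorm of $\partial^2 h$ since $\beta\le 2\alpha\le 4$. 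The ``otherwise'' case is immediate since then $\chi_\alpha(\upsilon)(\nabla h,\upsilon)$ is exactly the compensator term and $(L-\tilde L)h(x)=\int_{|\upsilon|\le\sigma}[h(x+\upsilon)-h(x)-\chi_\alpha(\upsilon)(\nabla h(x),\upsilon)]\pi(d\upsilon)$, estimated by $C|h|_\beta|\upsilon|^{\beta\wedge\,?}$ exactly as above against the definition of $\phi(\sigma)$.

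The main obstacle is the first step: making the Dynkin/It\^o identity rigorous for $h$ only in $\tilde C^\beta$ and, more delicately, justifying that one may condition on $\mathcal F_\tau$ and replace the stopping-time increments $Z_{\tau'}-Z_\tau$ by a fresh copy of the process run for the (random) length $\tau'-\tau$ --- i.e. the optional-sampling/strong-Markov bookkeeping, given that $\tau'$ is itself an $\mathbb F^{Z^\sigma}$-stopping time depending on the large jumps of $Z^\sigma$. I would handle this by working with the compensated Poisson integrals directly: expand $h(Z_{\tau'}-Z_\tau) - h(\tilde Z_{\tau'}-\tilde Z_\tau)$ via It\^o's formula for jump processes on $[\tau,\tau']$, take $\mathcal F_\tau$-conditional expectation so that all the $q(ds,d\upsilon)$ martingale parts drop (their integrands are bounded predictable processes and $\tau'$ is bounded), and identify the surviving drift term as $\int_\tau^{\tau'}(L-\tilde L)h(Z_s-Z_\tau)ds$; a preliminary mollification $h\mapsto h^\varepsilon$ with the bounds $|h^\varepsilon-h|_0\le C\varepsilon^{\beta\wedge 1}|h|_\beta$ and the crude estimate on $(L-\tilde L)h^\varepsilon$, then $\varepsilon\to 0$, removes the smoothness requirement --- exactly the device already used in Lemma \ref{le3} and in the existence part of Theorem \ref{thm:StoCP}. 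Everything else is routine Taylor estimation against the definition of $\tilde C^\beta$.
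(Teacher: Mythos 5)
There is a genuine gap in the first step. Applying It\^o's formula separately to $s\mapsto h(Z_s-Z_\tau)$ and $s\mapsto h(\tilde Z_s-\tilde Z_\tau)$ on $[\tau,\tau']$ and subtracting gives, after conditioning,
\begin{equation*}
\mathbf{E}[\Delta_\tau h\mid\mathcal F_\tau]=\mathbf{E}\Big[\int_\tau^{\tau'}\big(Lh(Z_s-Z_\tau)-\tilde Lh(\tilde Z_s-\tilde Z_\tau)\big)\,ds\,\Big|\,\mathcal F_\tau\Big],
\end{equation*}
where the two integrands are evaluated at \emph{different} random arguments. Your claimed representation collapses these to a single argument and writes the integrand as $(L-\tilde L)h(Z_s-Z_\tau)$; that identity is false. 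Inserting and subtracting $\tilde Lh(Z_s-Z_\tau)$ produces, besides the good term $(L-\tilde L)h(Z_s-Z_\tau)$ which is indeed $O(\phi(\sigma)|h|_\beta)$ in sup norm, the cross term $\tilde Lh(Z_s-Z_\tau)-\tilde Lh(\tilde Z_s-\tilde Z_\tau)$. This cross term is not $O(\phi(\sigma))$: one only knows $\tilde Lh\in\tilde C^{\beta-\alpha}$, so its modulus of continuity applied to the coupling gap $(\bar Z^\sigma_s-\bar Z^\sigma_\tau)-(R^\sigma_s-R^\sigma_\tau)$ does not produce the product of $\phi(\sigma)$ with $\mathbf{E}[\tau'-\tau\mid\mathcal F_\tau]$. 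The correct interpolation formula has $(L-\tilde L)$ acting on a time-$(t-s)$ semigroup applied to $h$, not on $h$ itself, and the two-argument mismatch is precisely what has to be removed before the Taylor estimates can enter.

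The paper handles this by exploiting the decomposition $Z=Z^\sigma+\bar Z^\sigma$, $\tilde Z=Z^\sigma+R^\sigma$: the $Z^\sigma$-part is common to both processes, and $\bar Z^\sigma$, $R^\sigma$ are independent of $Z^\sigma$ and hence of the $\mathbb F^{Z^\sigma}$-stopping times $\tau,\tau'$. Conditioning on $Z^\sigma$ freezes $\tau'-\tau$ as well as the common shift $Z^\sigma_{\tau'}-Z^\sigma_\tau$, reducing the problem to the deterministic-time comparison of $\mathbf{E}g(\bar Z^\sigma_t-\bar Z^\sigma_s)$ with $\mathbf{E}g(R^\sigma_t-R^\sigma_s)$ for a translate $g$ of $h$. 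That comparison is done by introducing $v(r,x)=\mathbf{E}[g(\bar Z_t^\sigma-\bar Z_r^\sigma+x)]$, which solves $\partial_r v+\bar Lv=0$ with $|v|_\beta\le|g|_\beta$, and applying It\^o to the \emph{single} process $r\mapsto v(r,R^\sigma_r-R^\sigma_s)$: because $v$ kills the $\partial_r+\bar L$ part, what survives is $(R^{\alpha,\beta}-\bar L)v$ evaluated at the one argument $R^\sigma_r-R^\sigma_s$. Your Taylor estimates on the generator difference are essentially the right ones once applied to $v$ (with $|v|_\beta\le|g|_\beta$) instead of to $h$ directly; the missing idea is this pass through the backward Kolmogorov solution for the small-jump process, which is what eliminates the two-argument problem that blocks your route.
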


\begin{proof}
Let $\bar{Z}^{\sigma }=Z-Z^{\sigma }$. We show first that there is a
constant $C$ such that for any $s<t,g\in \tilde{C}^{\beta }(\mathbf{R}^{d})$,%
\begin{equation}
|\mathbf{E}g(\bar{Z}_{t}^{\sigma }-\bar{Z}_{s}^{\sigma })-\mathbf{E}%
g(R_{t}^{\sigma }-R_{s}^{\sigma })|\leq C\phi (\sigma )|g|_{\beta }|t-s|.
\label{f1}
\end{equation}%
By Ito formula 
\begin{equation}
v(r,x)=\mathbf{E[}g(\bar{Z}_{t}^{\sigma }-\bar{Z}_{r}^{\sigma }+x),0\leq
r\leq t,  \label{fo1}
\end{equation}%
is the solution of the backward Kolmogorov equation%
\begin{eqnarray}
&&\partial _{t}v(r,x)+\int_{|\upsilon |\leq \sigma }[v(r,x+\upsilon
)-v(r,x)-\chi _{\alpha }(\upsilon )\left( \nabla v(r,x),\upsilon \right)
]\pi (d\upsilon )  \label{fo3} \\
&=&0,v(t,x)=g(x),0\leq s\leq t.  \notag
\end{eqnarray}%
Obviously, $v\in \tilde{C}^{\beta }([0,t]\times \mathbf{R}^{d})$ and (see (%
\ref{fo1})) $|v|_{\beta }\leq |g|_{\beta }.$ By Ito formula and (\ref{fo3}),%
\begin{eqnarray}
&&\mathbf{E}g(R_{t}^{\sigma }-R_{s}^{\sigma })-\mathbf{E}g(\bar{Z}%
_{t}^{\sigma }-\bar{Z}_{s}^{\sigma })  \label{f3} \\
&=&\mathbf{E}v(t,R_{t}^{\sigma }-R_{s}^{\sigma })-v(s,0)=\mathbf{E}%
\int_{s}^{t}[R^{\alpha ,\beta }v(r,R_{r}^{\sigma }-R_{s}^{\sigma })-\bar{L}%
v(r,R_{r}^{\sigma }-R_{s}^{\sigma })]dr,  \notag
\end{eqnarray}%
where%
\begin{equation*}
\bar{L}v(r,x)=\int_{|\upsilon |\leq \sigma }[v(r,x+\upsilon )-v(r,x)-\chi
_{\alpha }(\upsilon )\left( \nabla v(r,x),\upsilon \right) ]\pi (d\upsilon
),(r,x)\in H.
\end{equation*}

If $\alpha <\beta \in (1,2],\alpha \in (0,1],$ then for all $(r,x)\in H,$ 
\begin{eqnarray*}
&&|R^{\alpha ,\beta }v(r,x)-\int_{|\upsilon |\leq \sigma }[v(r,x+\upsilon
)-v(r,x)]\pi (d\upsilon )| \\
&\leq &\int_{0}^{1}\int_{|\upsilon |\leq \sigma }|\nabla v(r,x+s\upsilon
)-\nabla v(r,x)|~|\upsilon |d\pi ds \\
&\leq &C|v|_{\beta }\int_{|\upsilon |\leq \sigma }|\upsilon |^{\beta }d\pi
\leq C|h|_{\beta }\int_{|\upsilon |\leq \sigma }|\upsilon |^{\beta }d\pi .
\end{eqnarray*}%
\newline
If $\alpha <\beta \in (2,4],\alpha \in (1,2],$ then for all $(r,x)\in H,$ 
\begin{eqnarray*}
&&|R^{\alpha ,\beta }v(r,x)-\int_{|\upsilon |\leq \sigma }[v(r,x+\upsilon
)-v(r,x)-(\nabla v(r,x),\upsilon )]d\pi | \\
&\leq &\int_{0}^{1}\int_{|\upsilon |\leq \sigma }|D^{2}v(r,x+s\upsilon
)-D^{2}v(r,x)|~|\upsilon |^{2}d\pi ds \\
&\leq &C|v|_{\beta }\int_{|\upsilon |\leq \sigma }|\upsilon |^{\beta \wedge
3}d\pi \leq C|h|_{\beta }\int_{|\upsilon |\leq \sigma }|\upsilon |^{\beta
\wedge 3}d\pi .
\end{eqnarray*}%
The estimate of the difference $R^{\alpha ,\beta }v-\bar{L}v$ in the other
cases is straightforward and (\ref{f1}) follows by (\ref{f3}).

Since $Z^{\sigma },\bar{Z}^{\sigma }$ and $R^{\sigma }$ are independent and $%
\tau ,\tau ^{\prime }$ are $\mathbb{F}^{Z^{\sigma }}$ stopping times, we
have by (\ref{f1}) that%
\begin{eqnarray*}
&&|\mathbf{E[}h(Z_{\tau ^{\prime }}^{\sigma }-Z_{\tau }^{\sigma }+\bar{Z}%
_{\tau ^{\prime }}^{\sigma }-\bar{Z}_{\tau }^{\sigma })-h(Z_{\tau ^{\prime
}}^{\sigma }-Z_{\tau }^{\sigma }+R_{\tau ^{\prime }}^{\sigma }-R_{\tau
}^{\sigma })|\mathcal{F}_{\tau }]| \\
&\leq &C\phi (\sigma )|h|_{\beta }\mathbf{E}[\tau ^{\prime }-\tau |\mathcal{F%
}_{\tau }]\text{.}
\end{eqnarray*}%
The statement follows.
\end{proof}

For the proof or Theorem \ref{main2} we will need the following estimate.

\begin{lemma}
\label{lemn3}Let%
\begin{equation*}
V_{t}=at+bW_{t}+GZ_{t},
\end{equation*}%
where $a\in \mathbf{R}^{d}$, $b$ is a $d\times d$-matrix and $G$ is a $%
m\times m$-matrix. We assume $b=0$ if $\alpha \in (0,2)$ and $a=0$ if $%
\alpha \in (0,1)$ and 
\begin{equation*}
|a|+|b|+|G|\leq K.
\end{equation*}%
Let $\alpha <\beta \leq \mu \leq 2\alpha $ and $h\in \tilde{C}^{\beta
-\alpha }(\mathbf{R}^{d})$.

Then there is a constant $C=C(\alpha ,\beta ,K)$ such that%
\begin{equation*}
|\mathbf{E}h(V_{t})-h(0)|\leq Ct^{\frac{\beta }{\alpha }-1}|h|_{\beta
-\alpha }\text{. }
\end{equation*}
\end{lemma}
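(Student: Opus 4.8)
The plan is to observe that $V_t=at+bW_t+GZ_t$ (started at $0$) is the solution of (\ref{one}) with the \emph{constant} coefficients $a(\cdot)\equiv a$, $b(\cdot)\equiv b$, $G(\cdot)\equiv G$, whose $\tilde C^{\beta\vee1}$-norms are at most $K$; the asserted inequality is then precisely the one-step estimate of Lemma \ref{lem:expect} for the trivial partition $\{0,t\}$, only with the constant made explicit because the data are constant. Rather than invoke Lemma \ref{lem:expect} (and handle the boundary instant $s=\tau_1$), I would reproduce its short argument directly. Denote by $L^{V}$ the generator of $V$, i.e.\ the operator $L$ of (\ref{one}) for the frozen data above; the standing conventions $b=0$ for $\alpha<2$ and $a=0$ for $\alpha<1$ are exactly what makes each term of $L^{V}$ a derivative of order at most $2\alpha$. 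For $\varepsilon\in(0,1)$ let $h^\varepsilon$ be the mollification (\ref{maf7}). Applying Lemma \ref{lemn1} to this constant-coefficient instance of (\ref{one}) gives, with $C=C(\alpha,\beta,K)$,
\[
|h^\varepsilon-h|_0\le C\varepsilon^{\beta-\alpha}|h|_{\beta-\alpha},\qquad |L^{V}h^\varepsilon|_0\le C\varepsilon^{\beta-2\alpha}|h|_{\beta-\alpha}.
\]

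Next, since $h^\varepsilon$ is smooth and bounded with bounded derivatives, It\^{o}'s formula applied to $h^\varepsilon(V_{\cdot})$ yields $\mathbf{E}h^\varepsilon(V_t)-h^\varepsilon(0)=\mathbf{E}\int_0^tL^{V}h^\varepsilon(V_s)\,ds$, so $|\mathbf{E}h^\varepsilon(V_t)-h^\varepsilon(0)|\le t\,|L^{V}h^\varepsilon|_0$. Combining this with the zeroth-order estimate and the triangle inequality,
\[
|\mathbf{E}h(V_t)-h(0)|\le|\mathbf{E}(h-h^\varepsilon)(V_t)|+|h^\varepsilon(0)-h(0)|+|\mathbf{E}h^\varepsilon(V_t)-h^\varepsilon(0)|\le C\big(\varepsilon^{\beta-\alpha}+t\,\varepsilon^{\beta-2\alpha}\big)|h|_{\beta-\alpha}.
\]

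Finally I would optimise in $\varepsilon$. For $t<1$, choosing $\varepsilon=t^{1/\alpha}\in(0,1)$ makes the right-hand side equal to $2C\,t^{\frac{\beta}{\alpha}-1}|h|_{\beta-\alpha}$, which is the claim; for $t\ge1$ the trivial bound $|\mathbf{E}h(V_t)-h(0)|\le2|h|_0\le2|h|_{\beta-\alpha}\le2\,t^{\frac{\beta}{\alpha}-1}|h|_{\beta-\alpha}$ (using $\beta>\alpha$) suffices. The only genuinely non-routine ingredient is the scaling estimate $|L^{V}h^\varepsilon|_0\le C\varepsilon^{\beta-2\alpha}|h|_{\beta-\alpha}$, where one must balance the order-$\alpha$ integrability of $\pi$ near the origin against the $\varepsilon$-dependent smoothness of $h^\varepsilon$ inside the compensated jump integral; but this is exactly what Lemma \ref{lemn1} (equivalently Corollary 18 in \cite{mikz3}) provides, so here it comes for free and the remainder is bookkeeping.
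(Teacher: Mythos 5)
Your proof is correct and follows essentially the same route as the paper's: write the constant-coefficient generator of $V$, mollify $h$, apply It\^{o}'s formula, use the estimates of Lemma~\ref{lemn1} and the triangle inequality, then minimize $\varepsilon^{\beta-\alpha}+t\,\varepsilon^{\beta-2\alpha}$ over $\varepsilon\in(0,1)$. The one small addition you make -- handling $t\ge1$ separately via the trivial bound $2|h|_{0}\le 2\,t^{\beta/\alpha-1}|h|_{\beta-\alpha}$, since the minimizer $\varepsilon=t^{1/\alpha}$ would otherwise leave $(0,1)$ -- is a detail the paper leaves implicit.
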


\begin{proof}
For $f\in \tilde{C}^{\beta }(\mathbf{R}^{d})$, applying Ito formula,%
\begin{equation*}
\mathbf{E}f(V_{t})-f(0)=\mathbf{E}\int_{0}^{t}\mathcal{K}f(V_{r})dr\text{,}
\end{equation*}%
where for $x\in \mathbf{R}^{d},$%
\begin{eqnarray*}
\mathcal{K}f(x) &=&(a,\nabla f(x))+\frac{1}{2}\sum_{i,j}b^{\ast }b\partial
_{ij}^{2}f(x) \\
&&+\int [f(x+\upsilon )-f(x)-\chi _{\alpha }(\upsilon )(\nabla f(x),\upsilon
)]\pi (d\upsilon ).
\end{eqnarray*}%
For $h\in \tilde{C}^{\beta -\alpha }(\mathbf{R}^{d})$ we take $w\in
C_{0}^{\infty }(\mathbf{R}^{d}),$ be a nonnegative smooth function with
support in $\{|x|\leq 1\}$ such that $w(x)=w(|x|)$, $x\in \mathbf{R}^{d},$
and $\int w(x)dx=1.$ For $x\in \mathbf{R}^{d}$ and $\varepsilon \in (0,1)$,
define $w^{\varepsilon }(x)=\varepsilon ^{-d}w\left( \frac{x}{\varepsilon }%
\right) $ and the convolution 
\begin{equation*}
h^{\varepsilon }(x)=\int f(y)w^{\varepsilon }(x-y)dy,x\in \mathbf{R}^{d}.
\end{equation*}%
Then by Lemma \ref{lemn1} 
\begin{eqnarray*}
|\mathbf{E}h(V_{t})-h(0)| &\leq &2\varepsilon ^{\beta -\alpha }|h|_{\beta
-\alpha }+|\mathbf{E}\int_{0}^{t}\mathcal{K}h^{\varepsilon }(V_{r})dr| \\
&\leq &C|h|_{\beta -\alpha }(\varepsilon ^{\beta -\alpha }+\varepsilon
^{\beta -2\alpha }t)
\end{eqnarray*}%
for each $\varepsilon \in (0,1).$ The statement follows by minimizing the
inequality in $\varepsilon .$
\end{proof}

\subsubsection{Proof of Theorem \protect\ref{main2}}

Let $u\in \tilde{C}^{\beta }(H)$ be the unique solution to the backward
Kolmogorov equation%
\begin{eqnarray}
\big(\partial _{t}+L\big)u(t,x) &=&0,  \label{fo44} \\
u(T,x) &=&g(x).  \notag
\end{eqnarray}%
Let for $\tau _{i}\leq t\leq \tau _{i+1}$ 
\begin{eqnarray*}
H_{t}^{i} &=&a(\tilde{Y}_{\tau _{i}})(t-\tau _{i})+b(\tilde{Y}_{\tau
_{i}})(W_{t}-W_{\tau _{i}}) \\
&&+G(\tilde{Y}_{\tau _{i}})\left( Z_{t}-Z_{\tau _{i}}\right)
\end{eqnarray*}%
and denote $\Delta \tilde{Y}_{\tau _{i}}=\tilde{Y}_{\tau _{i+1}}-\tilde{Y}%
_{\tau _{i}}$. We approximate%
\begin{eqnarray*}
&&u(T,\tilde{Y}_{T})-u(0,Y_{0}) \\
&=&\sum_{i}u(\tau _{i+1},\tilde{Y}_{\tau _{i+1}})-u(\tau _{i},\tilde{Y}%
_{\tau _{i}}) \\
&=&\sum_{i}[u(\tau _{i+1},\tilde{Y}_{\tau _{i}}+\Delta \tilde{Y}_{\tau
_{i}})-u(\tau _{i+1},\tilde{Y}_{\tau _{i}}+H_{\tau _{i+1}}^{i})] \\
&&+\sum_{i}[u(\tau _{i+1},\tilde{Y}_{\tau _{i}}+H_{\tau _{i+1}}^{i})-u(\tau
_{i},\tilde{Y}_{\tau _{i}})] \\
&=&D_{1}+\sum_{i}D_{2i}.
\end{eqnarray*}%
According to (\ref{fo5}) (Lemma \ref{lemn2}),%
\begin{equation*}
\mathbf{E}|D_{1}|\leq C\phi (\sigma )|u|_{\beta }\leq C\phi (\sigma
)|g|_{\beta }.
\end{equation*}%
Now, we estimate the second term. By Ito formula for each $i,$%
\begin{eqnarray*}
\mathbf{E[}D_{2i}|\mathcal{F}_{\tau _{i}}] &=&\mathbf{E}[u(\tau _{i+1},%
\tilde{Y}_{\tau _{i}}+H_{\tau _{i+1}}^{i})-u(\tau _{i+1},\tilde{Y}_{\tau
_{i}})|\mathcal{F}_{\tau _{i}}] \\
&=&\mathbf{E\{}\int_{\tau _{i}}^{\tau _{i+1}}[\partial _{t}u(r,\tilde{Y}%
_{\tau _{i}}+H_{r}^{i})+L_{\tilde{Y}_{\tau _{i}}}u(r,\tilde{Y}_{\tau
_{i}}+H_{r}^{i})]dr|\mathcal{F}_{\tau _{i}}\} \\
&=&\mathbf{E}\int_{\tau _{i}}^{\tau _{i+1}}[(\partial _{t}u(r,\tilde{Y}%
_{\tau _{i}}+H_{r}^{i})-\partial _{t}u(r,\tilde{Y}_{\tau _{i}})) \\
&&+(L_{\tilde{Y}_{\tau _{i}}}u(r,\tilde{Y}_{\tau _{i}}+H_{r}^{i})-L_{\tilde{Y%
}_{\tau _{i}}}u(r,\tilde{Y}_{\tau _{i}}))]dr
\end{eqnarray*}%
and by\ Theorem \ref{thm:StoCP} and Lemmas \ref{le3} and \ref{lemn3},%
\begin{eqnarray*}
\left\vert \sum_{i}\mathbf{E}D_{2i}\right\vert &\leq &\sum_{i}|\mathbf{E}%
D_{2i}|\leq C\delta ^{\frac{\beta }{\alpha }-1}|Lu|_{\beta -\alpha } \\
&\leq &C\delta ^{\frac{\beta }{\alpha }-1}|u|_{\beta }\leq C\delta ^{\frac{%
\beta }{\alpha }-1}|g|_{\beta }
\end{eqnarray*}%
and the statement of Theorem \ref{main2} follows.

\subsection{Approximate jump-adapted scheme}

Consider the approximation of $X_{t}$ defined by the increments of $\tilde{Z}%
_{t}=Z_{t}^{\sigma }+R_{t}^{\sigma },0\leq t\leq T,$ in Example \ref{ex1}.
For $\sigma \in (0,1),\delta >0,$ consider the following $Z^{\sigma }$-jump
adapted time discretization: $\tau _{0}=0,$%
\begin{equation*}
\tau _{i+1}=\inf \left( t>\tau _{i}:\Delta Z_{t}^{\sigma }\neq 0\right)
\wedge (\tau _{i}+\delta )\wedge T.
\end{equation*}%
In this case the time discretization $\{\tau _{i},i=0,\ldots ,n_{T}\}$ of
the interval $[0,T]$ is random, $\tau _{i}$ are stopping times. We
approximate $X_{t}$ by%
\begin{equation*}
\hat{Y}_{t}=X_{0}+\int_{0}^{t}a(\hat{Y}_{\tau _{i_{s}}})ds+\int_{0}^{t}b(%
\hat{Y}_{\tau _{i_{s}}})dW_{s}+\int_{0}^{t}G(\hat{Y}_{\tau _{i_{s}}})d\tilde{%
Z}_{s},t\in \lbrack 0,T].
\end{equation*}

In this case,%
\begin{equation*}
\tau _{i+1}-\tau _{i}=\eta _{i+1}\wedge \delta \wedge (T-\tau _{i})
\end{equation*}%
with%
\begin{equation*}
\eta _{i+1}=\inf (t>0:p\left( (\tau _{i},\tau _{i}+t],\left\{ |\upsilon
|>\sigma \right\} \right) \geq 1)
\end{equation*}%
and $\eta _{i+1}$ is $\mathcal{F}_{\tau _{i}}$-conditionally exponential
with parameter $\lambda _{\sigma }=\pi \left( \left\{ |\upsilon |>\sigma
\right\} \right) $.

\begin{lemma}
\label{le2}Let $\delta _{i}^{\prime }=\delta \wedge (T-\tau _{i}),i\geq 0,$
and $\lambda _{\sigma }=\pi \left( \left\{ |\upsilon |>\sigma \right\}
\right) $.

(i) There is constant $c>0$ such that for any $i\geq 0$%
\begin{equation*}
c\left( \delta _{i}^{\prime }\wedge \lambda _{\sigma }^{-1}\right) \leq 
\mathbf{E}[\tau _{i+1}-\tau _{i}|\mathcal{F}_{\tau _{i}}]\leq \delta
_{i}^{\prime }\wedge \lambda _{\sigma }^{-1}.
\end{equation*}

(ii) There is a constant $C$ such that for any $i\geq 0,$%
\begin{eqnarray*}
\mathbf{E}[(\tau _{i+1}-\tau _{i})^{2}|\mathcal{F}_{\tau _{i}}] &\leq &C%
\mathbf{E}[\delta _{i}^{\prime 2}\wedge \lambda _{\sigma }^{-2}|\mathcal{F}%
_{\tau _{i}}] \\
&\leq &C(\delta \wedge \lambda _{\sigma }^{-1})\mathbf{E}[\tau _{i+1}-\tau
_{i}|\mathcal{F}_{\tau _{i}}].
\end{eqnarray*}
\end{lemma}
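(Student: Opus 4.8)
The plan is to reduce both parts to an elementary computation with a truncated exponential random variable. Set $\delta_i'=\delta\wedge(T-\tau_i)$, which is $\mathcal{F}_{\tau_i}$-measurable; as already noted, $\tau_{i+1}-\tau_i=\eta_{i+1}\wedge\delta_i'$ where $\eta_{i+1}$ is $\mathcal{F}_{\tau_i}$-conditionally exponential with parameter $\lambda_\sigma$. Since $\delta_i'$ is $\mathcal{F}_{\tau_i}$-measurable, $\mathbf{E}[(\tau_{i+1}-\tau_i)^k\mid\mathcal{F}_{\tau_i}]$ is obtained by plugging $\lambda_\sigma$ and $\delta_i'$ into the deterministic function $a\mapsto\mathbf{E}[(\eta\wedge a)^k]$ computed for an ordinary exponential variable $\eta$ of parameter $\lambda_\sigma$. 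Hence it suffices to establish, for all $\lambda>0$ and $a>0$, the bounds
\begin{equation*}
(1-e^{-1})\,(a\wedge\lambda^{-1})\leq\mathbf{E}[\eta\wedge a]\leq a\wedge\lambda^{-1},\qquad\mathbf{E}[(\eta\wedge a)^2]\leq2\,(a^2\wedge\lambda^{-2}),
\end{equation*}
and then substitute $a=\delta_i'$.

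For the first moment, the layer-cake formula gives $\mathbf{E}[\eta\wedge a]=\int_0^ae^{-\lambda t}\,dt=\lambda^{-1}(1-e^{-\lambda a})$. The upper bound follows from $1-e^{-\lambda a}\leq\min(\lambda a,1)$. For the lower bound I would use concavity of $x\mapsto1-e^{-x}$, which on $[0,1]$ gives $1-e^{-x}\geq(1-e^{-1})x$ (covering the case $\lambda a\leq1$), while for $\lambda a\geq1$ one simply uses $1-e^{-\lambda a}\geq1-e^{-1}$. This proves (i) with $c=1-e^{-1}$.

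For the second moment, the layer-cake formula again gives $\mathbf{E}[(\eta\wedge a)^2]=\int_0^a2t\,e^{-\lambda t}\,dt=2\lambda^{-2}[1-e^{-\lambda a}(1+\lambda a)]$. The elementary inequality $e^{-x}(1+x)\geq1-x^2/2$ for $x\geq0$ (the difference vanishes at $0$ and has derivative $x(1-e^{-x})\geq0$), together with $e^{-x}(1+x)\geq0$, yields $1-e^{-\lambda a}(1+\lambda a)\leq\min(1,(\lambda a)^2/2)$, hence $\mathbf{E}[(\eta\wedge a)^2]\leq2\,(a^2\wedge\lambda^{-2})$; this is the first inequality of (ii). For the second inequality, write $(\delta_i')^2\wedge\lambda_\sigma^{-2}=(\delta_i'\wedge\lambda_\sigma^{-1})^2\leq(\delta\wedge\lambda_\sigma^{-1})(\delta_i'\wedge\lambda_\sigma^{-1})$ since $\delta_i'\leq\delta$, and then apply the lower bound in (i) to replace the factor $\delta_i'\wedge\lambda_\sigma^{-1}$ by $(1-e^{-1})^{-1}\mathbf{E}[\tau_{i+1}-\tau_i\mid\mathcal{F}_{\tau_i}]$.

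There is no genuine obstacle here: the argument is pure calculus once the conditional law of $\eta_{i+1}$ is invoked. The only point requiring a line of care is the reduction in the first paragraph — that conditioning on $\mathcal{F}_{\tau_i}$ turns the exponential computation into the same formula with $a$ replaced by the $\mathcal{F}_{\tau_i}$-measurable quantity $\delta_i'$ — which follows from the description of $\eta_{i+1}$ through the Poisson point measure $p$ and the strong Markov property.
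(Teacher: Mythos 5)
Your proof is correct and follows essentially the same route as the paper: reduce via the $\mathcal{F}_{\tau_i}$-conditional exponential law of $\eta_{i+1}$ to the explicit formulas $\lambda^{-1}(1-e^{-\lambda a})$ and $2\lambda^{-2}[1-e^{-\lambda a}(1+\lambda a)]$ for the truncated moments, then estimate elementarily. The only differences are cosmetic — you use the layer-cake form of the integrals and spell out the elementary inequalities (e.g. $1-e^{-x}\geq(1-e^{-1})x$ on $[0,1]$, $e^{-x}(1+x)\geq 1-x^2/2$), obtaining the slightly sharper constant $c=1-e^{-1}$ where the paper settles for $1/3$, and you make the measurability reduction explicit where the paper leaves it implicit.
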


\begin{proof}
Since $\tau _{i+1}-\tau _{i}=\eta _{i+1}\wedge \delta \wedge (T-\tau _{i})$
and%
\begin{equation*}
\eta _{i+1}=\inf (t>0:p\left( (\tau _{i},\tau _{i}+t],\left\{ |\upsilon
|>\sigma \right\} \right) \geq 1)
\end{equation*}%
is $\mathcal{F}_{\tau _{i}}$-conditionally exponential with parameter $%
\lambda _{\sigma },$ we find%
\begin{eqnarray*}
\mathbf{E}[\tau _{i+1}-\tau _{i}|\mathcal{F}_{\tau _{i}}] &=&\mathbf{E}\left[
\eta _{i+1}\wedge \delta _{i}^{\prime }|\mathcal{F}_{\tau _{i}}\right]
=\lambda _{\sigma }\int_{0}^{\delta _{i}^{\prime }}te^{-\lambda _{\sigma
}t}dt+\delta _{i}^{\prime }e^{-\lambda _{\sigma }\delta _{i}^{\prime }} \\
&=&\frac{1-e^{-\lambda _{\sigma }\delta _{i}^{\prime }}}{\lambda _{\sigma }}.
\end{eqnarray*}%
If $\delta _{i}^{\prime }\geq \lambda _{\sigma }^{-1}$, then $\delta
_{i}^{\prime }\lambda _{\sigma }\geq 1$ and%
\begin{equation*}
\frac{1-e^{-\lambda _{\sigma }\delta _{i}^{\prime }}}{\lambda _{\sigma }}%
\geq \frac{1-e^{-1}}{\lambda _{\sigma }}\geq \frac{1}{3}\lambda _{\sigma
}^{-1}.
\end{equation*}%
If $\delta _{i}^{\prime }\leq \lambda _{\sigma }^{-1}$, then $\delta
_{i}^{\prime }\lambda _{\sigma }\leq 1$ and%
\begin{equation*}
\frac{1-e^{-\lambda _{\sigma }\delta _{i}^{\prime }}}{\lambda _{\sigma }}=%
\frac{1-e^{-\lambda _{\sigma }\delta _{i}^{\prime }}}{\lambda _{\sigma
}\delta _{i}^{\prime }}\delta _{i}^{\prime }\geq \frac{1}{2}\delta
_{i}^{\prime }.
\end{equation*}%
Therefore (i) follows. Similarly,%
\begin{eqnarray*}
\mathbf{E}[(\tau _{i+1}-\tau _{i})^{2}|\mathcal{F}_{\tau _{i}}] &=&\lambda
_{\sigma }\mathbf{E}[\int_{0}^{\delta _{i}^{\prime }}t^{2}e^{-\lambda
_{\sigma }t}dt+\delta _{i}^{\prime 2}e^{-\lambda _{\sigma }\delta
_{i}^{\prime }}|\mathcal{F}_{\tau _{i}}]dt \\
&=&\frac{2}{\lambda _{\sigma }^{2}}[-\lambda _{\sigma }\delta _{i}^{\prime
}e^{-\lambda _{\sigma }\delta _{i}^{\prime }}+1-e^{-\lambda _{\sigma }\delta
_{i}^{\prime }}]
\end{eqnarray*}%
and (ii) follows using (i).
\end{proof}

An immediate consequence of Lemma \ref{le2} is the following statement.

\begin{corollary}
\label{co5}(i) There are constants $c,C>0$ such that%
\begin{eqnarray*}
c\mathbf{E}\sum_{i}(\tau _{i+1}-\tau _{i}) &\leq &\sum_{i}\mathbf{E[}(\delta
\wedge \lambda _{\sigma }^{-1})\wedge (T-\tau _{i})] \\
&\leq &C\mathbf{E}\sum_{i}(\tau _{i+1}-\tau _{i})=CT.
\end{eqnarray*}

(ii) There is $C>0$ such that%
\begin{equation*}
\sum_{i}\mathbf{E[}(\tau _{i+1}-\tau _{i})^{2}]\leq CT(\delta \wedge \lambda
_{\sigma }^{-1}).
\end{equation*}
\end{corollary}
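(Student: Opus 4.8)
The plan is to obtain Corollary \ref{co5} directly from Lemma \ref{le2}: take expectations of the conditional one-step estimates there, use the tower property, and sum over the (random, but almost surely finite) collection of subintervals of the jump-adapted partition.

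First I would record the elementary identity
\[
\delta_i' \wedge \lambda_\sigma^{-1} = \big(\delta \wedge (T-\tau_i)\big)\wedge \lambda_\sigma^{-1} = (\delta\wedge\lambda_\sigma^{-1})\wedge(T-\tau_i),
\]
so that the middle quantity in part (i) is precisely $\sum_i \mathbf{E}[\delta_i'\wedge\lambda_\sigma^{-1}]$. Since $\lambda_\sigma=\pi(\{|\upsilon|>\sigma\})<\infty$, the process $Z^\sigma$ has only finitely many jumps on $[0,T]$, hence the partition $\{\tau_i\}$ is a.s.\ finite with $\tau_{n_T}=T$; extending the sequence by $\tau_{i+1}=\tau_i$ once $\tau_i=T$, the telescoping sum gives $\sum_{i\ge 0}(\tau_{i+1}-\tau_i)=T$ almost surely, so $\mathbf{E}\sum_i(\tau_{i+1}-\tau_i)=T$. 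All summands being nonnegative, Tonelli's theorem allows interchanging $\mathbf{E}$ and $\sum_i$ freely throughout.

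For (i): taking $\mathbf{E}$ in the bound $c(\delta_i'\wedge\lambda_\sigma^{-1})\le \mathbf{E}[\tau_{i+1}-\tau_i\mid\mathcal{F}_{\tau_i}]\le \delta_i'\wedge\lambda_\sigma^{-1}$ of Lemma \ref{le2}(i) and using the tower property gives $c\,\mathbf{E}[\delta_i'\wedge\lambda_\sigma^{-1}]\le \mathbf{E}[\tau_{i+1}-\tau_i]\le \mathbf{E}[\delta_i'\wedge\lambda_\sigma^{-1}]$. Summing over $i$, the right half yields $\mathbf{E}\sum_i(\tau_{i+1}-\tau_i)\le \sum_i\mathbf{E}[\delta_i'\wedge\lambda_\sigma^{-1}]$, i.e.\ the left inequality of the corollary (with constant $1$), while the left half yields $\sum_i\mathbf{E}[\delta_i'\wedge\lambda_\sigma^{-1}]\le c^{-1}\sum_i\mathbf{E}[\tau_{i+1}-\tau_i]=c^{-1}T$, i.e.\ the right inequality (with $C=c^{-1}$), and the final equality is the telescoping identity noted above.

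For (ii): taking $\mathbf{E}$ in $\mathbf{E}[(\tau_{i+1}-\tau_i)^2\mid\mathcal{F}_{\tau_i}]\le C(\delta\wedge\lambda_\sigma^{-1})\,\mathbf{E}[\tau_{i+1}-\tau_i\mid\mathcal{F}_{\tau_i}]$ from Lemma \ref{le2}(ii), applying the tower property, and summing over $i$ gives $\sum_i\mathbf{E}[(\tau_{i+1}-\tau_i)^2]\le C(\delta\wedge\lambda_\sigma^{-1})\sum_i\mathbf{E}[\tau_{i+1}-\tau_i]=CT(\delta\wedge\lambda_\sigma^{-1})$. There is no real obstacle here; the only point that deserves care is the almost sure finiteness of the partition and the resulting telescoping identity $\sum_i(\tau_{i+1}-\tau_i)=T$, which is exactly where the hypothesis $\lambda_\sigma<\infty$ (equivalently $\sigma>0$) is used, together with the nonnegativity of the summands needed to justify the interchange of expectation and summation over a random index set.
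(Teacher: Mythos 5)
Your proposal is correct and follows essentially the same route as the paper: sum the conditional one-step estimates of Lemma~\ref{le2} after applying the tower property, and use the telescoping identity $\sum_i(\tau_{i+1}-\tau_i)=T$. The only (minor, cosmetic) difference is in part (ii): you invoke the second inequality of Lemma~\ref{le2}(ii), namely $\mathbf{E}[(\tau_{i+1}-\tau_i)^2\mid\mathcal{F}_{\tau_i}]\le C(\delta\wedge\lambda_\sigma^{-1})\mathbf{E}[\tau_{i+1}-\tau_i\mid\mathcal{F}_{\tau_i}]$, and telescope directly, whereas the paper uses the first inequality $\mathbf{E}[(\tau_{i+1}-\tau_i)^2\mid\mathcal{F}_{\tau_i}]\le C\,\mathbf{E}[\delta_i'^2\wedge\lambda_\sigma^{-2}\mid\mathcal{F}_{\tau_i}]$, extracts a factor of $T\wedge\delta\wedge\lambda_\sigma^{-1}$, and then bounds the remaining sum by part (i); both are trivially equivalent, and yours is marginally more direct.
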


\begin{proof}
We derive (i) by summing inequalities in Lemma \ref{le2}(i)$.$ According to
Lemma \ref{le2}(ii) and (i), 
\begin{eqnarray*}
\sum_{i}\mathbf{E[}(\tau _{i+1}-\tau _{i})^{2}] &\leq &C\sum_{i}\mathbf{E}[(T%
\mathbf{-}\tau _{i})^{2}\wedge \delta ^{2}\wedge \lambda _{\sigma }^{-2}] \\
&\leq &C(T\wedge \delta \wedge \lambda _{\sigma }^{-1})\sum_{i}\mathbf{E}[(T%
\mathbf{-}\tau _{i})\wedge \delta \wedge \lambda _{\sigma }^{-1}] \\
&\leq &CT(\delta \wedge \lambda _{\sigma }^{-1}).
\end{eqnarray*}%
The statement follows.
\end{proof}

For the proof of Theorem \ref{propa2} we will need the following estimate as
well.

\begin{lemma}
\label{lemn30}Let%
\begin{equation*}
V_{t}=at+bW_{t}+GZ_{t},
\end{equation*}%
where $a\in \mathbf{R}^{d}$, $b$ is a $d\times d$-matrix and $G$ is a $%
m\times m$-matrix. We assume $b=0$ if $\alpha \in (0,2)$ and $a=0$ if $%
\alpha \in (0,1)$ and 
\begin{equation*}
|a|+|b|+|G|\leq K.
\end{equation*}%
Let $\alpha <\beta \leq \mu \leq 2\alpha $ and $h\in \tilde{C}^{\beta
-\alpha }(\mathbf{R}^{d})$.

Then there is a constant $C=C(\alpha ,\beta ,K)$ such that for any $i\geq 0$%
\begin{equation*}
|\mathbf{E[}\int_{\tau _{i}}^{\tau _{i+1}}h(V_{r})-h(V_{\tau _{i}})|\mathcal{%
F}_{\tau _{i}}]|\leq C|h|_{\beta -\alpha }\tilde{\lambda}_{\sigma }^{\frac{%
\beta }{\alpha }-1}\left( \delta \wedge \lambda _{\sigma }^{-1}\right) ^{%
\frac{\beta }{\alpha }-1}\mathbf{E}[(\tau _{i+1}-\tau _{i})|\mathcal{F}%
_{\tau _{i}}]\text{, }
\end{equation*}%
where $\lambda _{\sigma }=\pi \left( \left\{ |\upsilon |>\sigma \right\}
\right) ,$ 
\begin{equation*}
\tilde{\lambda}_{\sigma }=1+1_{\alpha \in (1,2)}|\int_{1\geq |\upsilon
|>\sigma }\upsilon d\pi |.
\end{equation*}
\end{lemma}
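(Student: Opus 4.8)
The plan is to exploit the structure of the $Z^{\sigma}$-jump-adapted partition: on the stochastic interval $[\tau_{i},\tau_{i+1})$ the process $Z^{\sigma}$ has, by construction, no jumps, so over this interval the large jumps of $Z$ contribute to $V$ only the deterministic compensator drift. Writing $Z=Z^{\sigma}+\bar{Z}^{\sigma}$ with $\bar{Z}^{\sigma}$ built from the jumps of size $\le\sigma$, one has, for $r\in[\tau_{i},\tau_{i+1})$,
\[
V_{r}-V_{\tau_{i}}=\bar{V}_{r-\tau_{i}},\qquad \bar{V}_{s}:=\hat{a}\,s+b\,(W_{\tau_{i}+s}-W_{\tau_{i}})+G\,(\bar{Z}^{\sigma}_{\tau_{i}+s}-\bar{Z}^{\sigma}_{\tau_{i}}),
\]
where $\hat{a}=a-Gc_{\sigma}$ and $c_{\sigma}$ is the compensator drift of the $Z^{\sigma}$-jumps (which is nonzero only for $\alpha>1$, so that $|\hat{a}|\le CK\tilde{\lambda}_{\sigma}$). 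By the strong Markov property at the stopping time $\tau_{i}$, conditionally on $\mathcal{F}_{\tau_{i}}$ the process $\bar{V}$ is an independent copy of the L\'{e}vy process $\hat{a}\,t+bW_{t}+G\bar{Z}^{\sigma}_{t}$, which has bounded Gaussian coefficient and jump measure $\pi$ restricted to $\{|\upsilon|\le\sigma\}$, hence is still of order $\alpha$ and of the type covered by Lemma \ref{lemn3}; and, being built from $W$ and $\bar{Z}^{\sigma}$, $\bar{V}$ is independent of $Z^{\sigma}$, hence of the waiting time $\eta_{i+1}$ to the first $Z^{\sigma}$-jump after $\tau_{i}$, which is conditionally $\mathrm{Exp}(\lambda_{\sigma})$; recall $\tau_{i+1}-\tau_{i}=\eta_{i+1}\wedge\delta_{i}^{\prime}$ with $\delta_{i}^{\prime}=\delta\wedge(T-\tau_{i})$.

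Granting this, I would shift the argument of $h$ by $V_{\tau_{i}}$, replacing $h$ by $\bar{h}:=h(\cdot+V_{\tau_{i}})$ with $|\bar{h}|_{\beta-\alpha}=|h|_{\beta-\alpha}$, and use Fubini together with the conditional independence of $\bar{V}$ and $\eta_{i+1}$ to write
\[
\mathbf{E}\Big[\int_{\tau_{i}}^{\tau_{i+1}}\big(h(V_{r})-h(V_{\tau_{i}})\big)\,dr\ \Big|\ \mathcal{F}_{\tau_{i}}\Big]=\int_{0}^{\delta_{i}^{\prime}}e^{-\lambda_{\sigma}s}\,\mathbf{E}\big[\bar{h}(\bar{V}_{s})-\bar{h}(0)\ \big|\ \mathcal{F}_{\tau_{i}}\big]\,ds.
\]
The inner expectation is estimated by the mollification argument of Lemma \ref{lemn3}, performed with the drift kept explicit: mollifying $h$ at scale $\varepsilon\in(0,1)$ and applying It\^{o}'s formula to $\bar{V}$, one obtains a bound in which, besides the usual terms $\varepsilon^{\beta-\alpha}+\varepsilon^{\beta-2\alpha}s$, the drift contributes an extra term proportional to $|\hat{a}|s$ (times a power of $\varepsilon$ that is harmless because $\hat{a}\neq0$ forces $\alpha\ge1$). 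Optimizing in $\varepsilon$ — the choice $\varepsilon^{\alpha}\sim s$ is admissible for $s<1$ — and bounding the drift contribution by $(\tilde{\lambda}_{\sigma}s)^{\beta/\alpha-1}$ on $\tilde{\lambda}_{\sigma}s\le1$ (using $\beta/\alpha-1\le1$), one gets
\[
\big|\mathbf{E}[\bar{h}(\bar{V}_{s})-\bar{h}(0)|\mathcal{F}_{\tau_{i}}]\big|\le C|h|_{\beta-\alpha}\big((\tilde{\lambda}_{\sigma}s)^{\frac{\beta}{\alpha}-1}\wedge1\big),
\]
the $\wedge1$ being the trivial bound $2|h|_{0}$ on the range $\tilde{\lambda}_{\sigma}s>1$.

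It then remains to integrate $\int_{0}^{\delta_{i}^{\prime}}e^{-\lambda_{\sigma}s}\big((\tilde{\lambda}_{\sigma}s)^{\beta/\alpha-1}\wedge1\big)\,ds$. This is an elementary computation: splitting at $s=\tilde{\lambda}_{\sigma}^{-1}$, using $s^{\beta/\alpha-1}\le(\delta_{i}^{\prime}\wedge\lambda_{\sigma}^{-1})^{\beta/\alpha-1}$ and $\int_{0}^{\infty}e^{-\lambda_{\sigma}s}s^{\beta/\alpha-1}\,ds=\Gamma(\beta/\alpha)\lambda_{\sigma}^{-\beta/\alpha}$ on $\{s\le\tilde{\lambda}_{\sigma}^{-1}\}$, and, on the complementary part — which is nonempty only when $\delta_{i}^{\prime}>\tilde{\lambda}_{\sigma}^{-1}$, whence either $\tilde{\lambda}_{\sigma}(\delta\wedge\lambda_{\sigma}^{-1})\ge1$ or $e^{-\lambda_{\sigma}s}\le e^{-\lambda_{\sigma}/\tilde{\lambda}_{\sigma}}\le C_{p}(\lambda_{\sigma}/\tilde{\lambda}_{\sigma})^{-p}$ — the trivial bound carries the needed power, one obtains the integral $\le C\,\tilde{\lambda}_{\sigma}^{\beta/\alpha-1}(\delta\wedge\lambda_{\sigma}^{-1})^{\beta/\alpha-1}\,\frac{1-e^{-\lambda_{\sigma}\delta_{i}^{\prime}}}{\lambda_{\sigma}}$. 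Since $\frac{1-e^{-\lambda_{\sigma}\delta_{i}^{\prime}}}{\lambda_{\sigma}}=\mathbf{E}[\tau_{i+1}-\tau_{i}|\mathcal{F}_{\tau_{i}}]$ (exactly the computation already made in Lemma \ref{le2}(i)) and $\delta_{i}^{\prime}\le\delta$, the asserted bound follows.

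The main obstacle is the first step together with the bookkeeping of the enlarged drift: one must check carefully that the identity $V_{r}-V_{\tau_{i}}=\bar{V}_{r-\tau_{i}}$ on $[\tau_{i},\tau_{i+1})$ is compatible with applying the strong Markov property at the stopping time $\tau_{i}$ and with the conditional independence of $\bar{V}$ and $\eta_{i+1}$, and that a drift of size $\sim K\tilde{\lambda}_{\sigma}$ survives the $\alpha$-stable scaling $s^{\beta/\alpha-1}$ — which works precisely because such a drift can occur only for $\alpha>1$, so it is subcritical for that scaling on the range $s\le\tilde{\lambda}_{\sigma}^{-1}$, while on the complementary range the very condition $\delta_{i}^{\prime}>\tilde{\lambda}_{\sigma}^{-1}$ already forces the required power of $\delta\wedge\lambda_{\sigma}^{-1}$ to be present. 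The remaining integration is routine, but the case split at $\tilde{\lambda}_{\sigma}^{-1}$ is what converts a naive $\delta^{\beta/\alpha-1}$ into the sharper $(\delta\wedge\lambda_{\sigma}^{-1})^{\beta/\alpha-1}$.
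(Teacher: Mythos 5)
Your proposal is correct in substance and takes a genuinely different route from the paper. The paper applies It\^o's formula directly to $f(V_r)-f(V_{\tau_i})$ over the random interval $[\tau_i,\tau_{i+1}]$, obtaining a drift term and a compensated-Poisson martingale $M$, and then must cope with the fact that $\mathbf{E}[\int_{\tau_i}^{\tau_{i+1}}(M_s-M_{\tau_i})ds\,|\,\mathcal{F}_{\tau_i}]$ does not vanish because $\tau_{i+1}$ is measurable with respect to $Z^\sigma$. It does so by integrating by parts in $ds$, separating the small-jump and $Z^\sigma$-jump parts of $M$, and exploiting that $\tau_i$ are $\mathbb{F}^{Z^\sigma}$-stopping times to replace $M$ by the deterministic compensator $-U^\sigma$; this produces the factor $\tilde{\lambda}_\sigma$ and leads to a bound in $\mathbf{E}[(\tau_{i+1}-\tau_i)^2|\mathcal{F}_{\tau_i}]$, which is then processed via Lemma \ref{le2}(ii). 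You instead make the decoupling explicit at the outset: since $Z^\sigma$ by construction has no jumps on $(\tau_i,\tau_{i+1})$, the increment $V_r-V_{\tau_i}$ there equals $\bar{V}_{r-\tau_i}$ with modified drift $\hat a=a-Gc_\sigma$, and $\bar V$ (built from $W$ and $\bar Z^\sigma$) is $\mathcal{F}_{\tau_i}$-conditionally independent of the exponential holding time $\eta_{i+1}$. This reduces the whole conditional expectation to the deterministic integral $\int_0^{\delta_i'} e^{-\lambda_\sigma s}\,\mathbf{E}[\bar h(\bar V_s)-\bar h(0)\,|\,\mathcal{F}_{\tau_i}]\,ds$, which you then estimate by the mollification scheme of Lemma \ref{lemn3} (generalized to admit the enlarged drift) followed by elementary integration. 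The effect of the large-jump compensator drift — the origin of $\tilde{\lambda}_\sigma$ — is identical in both arguments; your route makes the structural reason transparent, whereas the paper's integration-by-parts unwinding is less direct but does not require exhibiting $\bar V$.

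Two small points of care. First, in the one-step estimate for $\bar h(\bar V_s)-\bar h(0)$, the correct mollification scale is $\varepsilon^\alpha\sim\tilde{\lambda}_\sigma s$ (on $\tilde{\lambda}_\sigma s\le 1$), not $\varepsilon^\alpha\sim s$: with the latter choice the drift term $|\hat a|\,\varepsilon^{(\beta-\alpha-1)\wedge 0}s$ yields $\tilde\lambda_\sigma s^{(\beta-1)/\alpha}$, which is not dominated by $(\tilde\lambda_\sigma s)^{\beta/\alpha-1}$ when $\tilde\lambda_\sigma$ is large. With $\varepsilon^\alpha=\tilde\lambda_\sigma s$ all three terms $\varepsilon^{\beta-\alpha}$, $\varepsilon^{\beta-2\alpha}s$, $|\hat a|\varepsilon^{(\beta-\alpha-1)\wedge 0}s$ come out $\lesssim(\tilde\lambda_\sigma s)^{\beta/\alpha-1}$ precisely because $\hat a\ne 0$ forces $\alpha\ge 1$, so the stated bound $C|h|_{\beta-\alpha}\big((\tilde\lambda_\sigma s)^{\beta/\alpha-1}\wedge 1\big)$ holds — just be explicit about the scale. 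Second, the final integration is simpler than the case-split you sketch: since $(\tilde\lambda_\sigma s)^{\theta}\wedge 1\le(\tilde\lambda_\sigma s)^{\theta}$ with $\theta=\beta/\alpha-1\in(0,1]$, the substitution $u=\lambda_\sigma s$ gives $\tilde\lambda_\sigma^\theta\lambda_\sigma^{-\theta-1}\int_0^{\lambda_\sigma\delta_i'} e^{-u}u^\theta\,du$, and distinguishing $\lambda_\sigma\delta_i'\le 1$ (where $\int_0^{A}e^{-u}u^\theta du\le A^{\theta+1}/(\theta+1)$ and $1-e^{-A}\ge A/2$, so $\delta_i'\le\delta\wedge\lambda_\sigma^{-1}$ carries the power) from $\lambda_\sigma\delta_i'>1$ (where $\int_0^\infty e^{-u}u^\theta du=\Gamma(\theta+1)\le 1$ and $1-e^{-A}\ge 1-e^{-1}$) yields the asserted bound $C\tilde\lambda_\sigma^\theta(\delta\wedge\lambda_\sigma^{-1})^\theta\cdot\frac{1-e^{-\lambda_\sigma\delta_i'}}{\lambda_\sigma}$ directly, with no further splitting at $\tilde\lambda_\sigma^{-1}$ needed.
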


\begin{proof}
For $f\in \tilde{C}^{\beta }(\mathbf{R}^{d}),i\geq 0,$ applying Ito formula,%
\begin{equation*}
\mathbf{E[}\int_{\tau _{i}}^{\tau _{i+1}}f(V_{r})-f(V_{\tau _{i}})|\mathcal{F%
}_{\tau _{i}}]dr=\mathbf{E}\int_{\tau _{i}}^{\tau _{i+1}}[\int_{\tau
_{i}}^{s}\mathcal{K}f(V_{r})dr+M_{s}-M_{\tau _{i}}]ds|\mathcal{F}_{\tau
_{i}}]dr,
\end{equation*}%
where for $x\in \mathbf{R}^{d},$%
\begin{eqnarray*}
\mathcal{K}f(x) &=&(a,\nabla f(x))+\frac{1}{2}\sum_{i,j}b^{\ast }b\partial
_{ij}^{2}f(x) \\
&&+\int [f(x+\upsilon )-f(x)-\chi _{\alpha }(\upsilon )(\nabla f(x),\upsilon
)]\pi (d\upsilon )
\end{eqnarray*}%
and%
\begin{equation*}
M_{t}=\int_{0}^{t}\int [f(V_{r-}+G\upsilon )-f(V_{r-})]q(dr,d\upsilon ),t\in
\lbrack 0,T]
\end{equation*}%
Note that%
\begin{equation*}
\int_{\tau _{i}}^{\tau _{i+1}}(M_{s}-M_{\tau _{i}})d(s-\tau _{i})=(M_{\tau
_{i+1}}-M_{\tau _{i}})(\tau _{i+1}-\tau _{i})-\int_{\tau _{i}}^{\tau
_{i+1}}(s-\tau _{i})dM_{s}.
\end{equation*}%
Since $Z^{\sigma }$and $\bar{Z}^{\sigma }=Z-Z^{\sigma }$ are independent and 
$\tau _{i}$ are $\mathbb{F}^{Z^{\sigma }}$-stopping times, it follows by
definition of $\tau _{i}$ that%
\begin{eqnarray*}
&&\mathbf{E[}(M_{\tau _{i+1}}-M_{\tau _{i}})(\tau _{i+1}-\tau
_{i})-\int_{\tau _{i}}^{\tau _{i+1}}(s-\tau _{i})dM_{s}|\mathcal{F}_{\tau
_{i}}] \\
&=&\mathbf{E}[-(\tau _{i+1}-\tau _{i})(U_{\tau _{i+1}}^{\sigma }-U_{\tau
_{i}}^{\sigma })+\int_{\tau _{i}}^{\tau _{i+1}}(s-\tau _{i})dU_{s}^{\sigma }|%
\mathcal{F}_{\tau _{i}}] \\
&=&-\mathbf{E[}\int_{\tau _{i}}^{\tau _{i+1}}(U_{s}^{\sigma }-U_{\tau
_{i}}^{\sigma })ds|\mathcal{F}_{\tau _{i}}],
\end{eqnarray*}%
where%
\begin{eqnarray*}
U_{t}^{\sigma } &=&\int_{0}^{t}\int_{|\upsilon |>\sigma }[f(V_{r-}+G\upsilon
)-f(V_{r-})]d\pi dr \\
&=&\int_{0}^{t}\int_{|\upsilon |>1}[f(V_{r-}+G\upsilon )-f(V_{r-})]d\pi
dr+\int_{0}^{t}\int_{1\geq |\upsilon |>\sigma }\chi _{\alpha }(\upsilon
)(\nabla f(V_{r}),\upsilon )d\pi dr \\
&&+\int_{0}^{t}\int_{1\geq |\upsilon |>\sigma }[f(V_{r-}+G\upsilon
)-f(V_{r-})-\chi _{\alpha }(\upsilon )(\nabla f(V_{r}),\upsilon )]d\pi dr
\end{eqnarray*}%
Hence%
\begin{eqnarray}
&&|\mathbf{E[}\int_{\tau _{i}}^{\tau _{i+1}}f(V_{r})-f(V_{\tau _{i}})|%
\mathcal{F}_{\tau _{i}}]dr|  \label{for7} \\
&\leq &C(1+1_{\alpha \in (1,2)}|\int_{1\geq |\upsilon |>\varepsilon
}\upsilon d\pi |)|f|_{\beta }\mathbf{E}[(\tau _{i+1}-\tau _{i})^{2}|\mathcal{%
F}_{\tau _{i}}].  \notag
\end{eqnarray}

For $h\in \tilde{C}^{\beta -\alpha }(\mathbf{R}^{d})$ we take $w\in
C_{0}^{\infty }(\mathbf{R}^{d}),$ be a nonnegative smooth function with
support in $\{|x|\leq 1\}$ such that $w(x)=w(|x|)$, $x\in \mathbf{R}^{d},$
and $\int w(x)dx=1.$ For $x\in \mathbf{R}^{d}$ and $\varepsilon \in (0,1)$,
define $w^{\varepsilon }(x)=\varepsilon ^{-d}w\left( \frac{x}{\varepsilon }%
\right) $ and the convolution 
\begin{equation*}
h^{\varepsilon }(x)=\int f(y)w^{\varepsilon }(x-y)dy,x\in \mathbf{R}^{d}.
\end{equation*}%
Then by Lemma \ref{lemn1} and (\ref{for7}),%
\begin{eqnarray*}
&&\left\vert \mathbf{E[}\int_{\tau _{i}}^{\tau _{i+1}}h(V_{r})-h(V_{\tau })|%
\mathcal{F}_{\tau _{i}}]dr\right\vert \\
&\leq &2\varepsilon ^{\beta -\alpha }|h|_{\beta -\alpha }\mathbf{E[}\left(
\tau _{i+1}-\tau _{i}\right) |\mathcal{F}_{\tau }]+|\mathbf{E[}\int_{\tau
_{i}}^{\tau _{i+1}}(h^{\varepsilon }(V_{r})-h^{\varepsilon }(V_{\tau }))dr|%
\mathcal{F}_{\tau }]| \\
&\leq &2\varepsilon ^{\beta -\alpha }|h|_{\beta -\alpha }\mathbf{E[}\tau
_{i+1}-\tau _{i}|\mathcal{F}_{\tau _{i}}] \\
&&+C\varepsilon ^{\beta -2\alpha }[1+1_{\alpha \in (1,2)}|\int_{1\geq
|\upsilon |>\varepsilon }\upsilon d\pi |]|h|_{\beta -\alpha }\mathbf{E}%
[(\tau _{i+1}-\tau _{i})^{2}|\mathcal{F}_{\tau _{i}}].
\end{eqnarray*}%
Minimizing the inequality in $\varepsilon $ we find by Lemma \ref{le2}(ii)
that%
\begin{eqnarray*}
&&\left\vert \mathbf{E[}\int_{\tau _{i}}^{\tau _{i+1}}h(V_{r})-h(V_{\tau })|%
\mathcal{F}_{\tau _{i}}]dr\right\vert \\
&\leq &C|h|_{\beta -\alpha }\tilde{\lambda}_{\sigma }^{\frac{\beta }{%
^{\alpha }}-1}\mathbf{E[}\tau _{i+1}-\tau _{i}|\mathcal{F}_{\tau _{i}}]^{2-%
\frac{\beta }{\alpha }}\mathbf{E}[(\tau _{i+1}-\tau _{i})^{2}|\mathcal{F}%
_{\tau _{i}}]^{\frac{\beta }{\alpha }-1} \\
&\leq &C|h|_{\beta -\alpha }\tilde{\lambda}_{\sigma }^{\frac{\beta }{%
^{\alpha }}-1}(\delta \wedge \lambda _{\sigma }^{-1})^{\frac{\beta }{\alpha }%
-1}\mathbf{E[}\tau _{i+1}-\tau _{i}|\mathcal{F}_{\tau _{i}}].
\end{eqnarray*}
\end{proof}

\subsubsection{Proof of Theorem \protect\ref{propa2}}

Let $u\in \tilde{C}^{\beta }(H)$ be the unique solution to the backward
Kolmogorov equation (see Theorem \ref{thm:StoCP})%
\begin{eqnarray}
\big(\partial _{t}+L\big)u(t,x) &=&0,  \label{fo4} \\
u(T,x) &=&g(x).  \notag
\end{eqnarray}%
Let for $\tau _{i}\leq t\leq \tau _{i+1}$ 
\begin{eqnarray*}
H_{t}^{i} &=&a(\hat{Y}_{\tau _{i}})(t-\tau _{i})+b(\hat{Y}_{\tau
_{i}})(W_{t}-W_{\tau _{i}}) \\
&&+G(\hat{Y}_{\tau _{i}})\left( Z_{t}-Z_{\tau _{i}}\right)
\end{eqnarray*}%
and denote $\Delta \hat{Y}_{\tau _{i}}=\hat{Y}_{\tau _{i+1}}-\hat{Y}_{\tau
_{i}}$. We approximate%
\begin{eqnarray*}
&&u(T,\hat{Y}_{T})-u(0,X_{0}) \\
&=&\sum_{i}u(\tau _{i+1},\hat{Y}_{\tau _{i+1}})-u(\tau _{i},\hat{Y}_{\tau
_{i}}) \\
&=&\sum_{i}[u(\tau _{i+1},\hat{Y}_{\tau _{i}}+\Delta \hat{Y}_{\tau
_{i}})-u(\tau _{i+1},\hat{Y}_{\tau _{i}}+H_{\tau _{i+1}}^{i})] \\
&&+\sum_{i}[u(\tau _{i+1},\hat{Y}_{\tau _{i}}+H_{\tau _{i+1}}^{i})-u(\tau
_{i},\hat{Y}_{\tau _{i}})] \\
&=&D_{1}+\sum_{i}D_{2i}.
\end{eqnarray*}%
According to Lemma \ref{lemn2}, 
\begin{equation*}
\mathbf{E}|D_{1}|\leq C\phi (\sigma )|u|_{\beta }\leq C\phi (\sigma
)|g|_{\beta }.
\end{equation*}%
Now, we estimate the second term. By Ito formula for each $i,$%
\begin{eqnarray*}
\mathbf{E[}D_{2i}|\mathcal{F}_{\tau _{i}}] &=&\mathbf{E}[u(\tau _{i+1},\hat{Y%
}_{\tau _{i}}+H_{\tau _{i+1}}^{i})-u(\tau _{i+1},\hat{Y}_{\tau _{i}})|%
\mathcal{F}_{\tau _{i}}] \\
&=&\mathbf{E\{}\int_{\tau _{i}}^{\tau _{i+1}}[\partial _{t}u(r,\hat{Y}_{\tau
_{i}}+H_{r}^{i})+L_{\hat{Y}_{\tau _{i}}}u(r,\hat{Y}_{\tau
_{i}}+H_{r}^{i})]dr|\mathcal{F}_{\tau _{i}}\} \\
&=&\mathbf{E}\int_{\tau _{i}}^{\tau _{i+1}}[(\partial _{t}u(r,\hat{Y}_{\tau
_{i}}+H_{r}^{i})-\partial _{t}u(r,\hat{Y}_{\tau _{i}})) \\
&&+(L_{\hat{Y}_{\tau _{i}}}u(r,\hat{Y}_{\tau _{i}}+H_{r}^{i})-L_{\hat{Y}%
_{\tau _{i}}}u(r,\hat{Y}_{\tau _{i}}))]dr
\end{eqnarray*}%
and by\ Theorem \ref{thm:StoCP} and Lemmas \ref{le3}, \ref{lemn30} and
Corollary \ref{co5},%
\begin{eqnarray*}
\left\vert \sum_{i}\mathbf{E}D_{2i}\right\vert &\leq &\sum_{i}|\mathbf{E}%
D_{2i}|\leq C\tilde{\lambda}_{\sigma }^{\frac{\beta }{\alpha }-1}\left(
\delta \wedge \lambda _{\sigma }^{-1}\right) ^{\frac{\beta }{\alpha }%
-1}(|\partial _{t}u|_{\beta -\alpha }+|Lu|_{\beta -\alpha }) \\
&\leq &C\tilde{\lambda}_{\sigma }^{\frac{\beta }{\alpha }-1}\left( \delta
\wedge \lambda _{\sigma }^{-1}\right) ^{\frac{\beta }{\alpha }-1}|u|_{\beta
}\leq C\tilde{\lambda}_{\sigma }^{\frac{\beta }{\alpha }-1}\left( \delta
\wedge \lambda _{\sigma }^{-1}\right) ^{\frac{\beta }{\alpha }-1}|g|_{\beta }
\end{eqnarray*}%
and the statement of Theorem \ref{propa2} follows.

\section{Conclusion}

The paper studies a simple weak Euler approximation of solutions to possibly
completely degenerate stochastic differential equations driven by L\'{e}vy
processes. The dependence of the rate of convergence on the regularity of
coefficients and driving processes is investigated under the assumption of $%
\beta $-Lipshitz continuity of the coefficients. It is assumed that the SDE
is driven by Levy processes of order $\alpha \in (0,2]$ and that the tail of
the L\'{e}vy measure of the driving process has a $\mu $-order finite moment
($\mu \in (\alpha ,2\alpha ]).$ The resulting rate depends on $\beta ,\alpha 
$ and $\mu $. Following \cite{jkmp}, the robustness of the results to the
approximation of the law of the increments of the driving noise is studied
as well. It is shown that time discretization and substitution errors add
up. In addition, a jump-adapted approximate Euler scheme is considered as
well. The derived error estimate shows that sometimes the inclusion of jump
moments into time discretization $\left\{ \tau _{i}\right\} $ could improve
the convergence rate. In order to estimate the rate of convergence, the
existence of a unique solution to the corresponding backward degenerate
Kolmogorov equation in Lipshitz space is first proved.

On the other hand, there is a discrepancy in the model (\ref{one}) between $%
\alpha =2$ and $\alpha \in (0,2).$ One would like to consider the equation 
\begin{equation*}
X_{t}=X_{0}+\int_{0}^{t}a(X_{s})ds+\int_{0}^{t}b(X_{s})dW_{s}^{\alpha
}+\int_{0}^{t}G(X_{s-})dZ_{s},t\in \lbrack 0,T],
\end{equation*}%
with a possibly degenerate $b$ and a spherically symmetric $\alpha $-stable $%
W^{\alpha }$ (in (\ref{one}), $b=0$ for $\alpha \in (0,2)$).

Since (\ref{one}) could be degenerate, a solution corresponding to a given $%
\alpha \in (0,2]$ can be looked at as a solution corresponding to $\bar{%
\alpha}\in (\alpha ,2]$ as well. Therefore the rate for a fixed $\alpha $
cannot be "universally optimal" : there is always a large subclass for which
the rate claimed for $\alpha $ could be better and achieved under weaker
assumptions. For example, if $\beta =\mu =2\alpha $ with $\alpha \in (0,2)$
(the diffusion part is absent), the convergence order is $\kappa =1$ ($\mu
=4 $ and $G\in \tilde{C}^{4}$ is not needed). Even "strictly at $\alpha "$,
the assumption about the tail moment $\mu \in (\alpha ,2\alpha ]$ is not
optimal. It could be weakened for a subclass with the driving processes $Z$
such that the compensator of the jump measure of $X_{t}$ has a nice density
with respect to a reference measure. For example, let us consider the
following one dimensional model%
\begin{equation}
X_{t}=X_{0}+\int_{0}^{t}a(X_{s})ds+\int_{0}^{t}b(X_{s})dW_{s}+%
\int_{0}^{t}G(X_{s-})dZ_{s},t\in \lbrack 0,T],  \label{fo7}
\end{equation}%
where $Z$ is a symmetric $\lambda $-stable with $\lambda \in (0,1)$ and $%
G\geq 0.$ Assume $a,b,G^{\lambda },g\in \tilde{C}^{4}(\mathbf{R})$. Although 
$\mu <1$ in this case and the equation is possibly degenerate, a plausible
convergence rate is still $\kappa =1$ $($or $\kappa =\nu /4$ if $g\in \tilde{%
C}^{\nu }(\mathbf{R}),\nu \in (0,4]$), because the integral part of the
generator of (\ref{fo7}), 
\begin{equation*}
Iv(x)=\int [v(x+G(x)y)-v(x)]\frac{dy}{|y|^{1+\lambda }}=G(x)^{\lambda }\int
[v(x+y)-v(x)]\frac{dy}{|y|^{1+\lambda }},
\end{equation*}%
is differentiable without assuming much about the tail moments of the L\'{e}%
vy measure.


\end{document}